\definecolor{darkgreen}{rgb}{0, 0.5, 0}
\newtheorem{theorem}{Theorem}
\newtheorem{lemma}{Lemma}
\newtheorem{corollary}{Corollary}
\newtheorem{definition}{Definition}
\newtheorem{Ex}{Example}
\newtheorem*{theorem*}{Theorem}
\newtheorem{remark}{Remark}
\newcommand{\rn}[1]{{\color{red} #1}}
\newcommand\addvmargin[1]{
  \node[fit=(current bounding box),inner ysep=#1,inner xsep=0]{};}
\newcommand{\ind}{{\rm ind \hspace{.1cm}}}
\newcommand\scalemath[2]{\scalebox{#1}{\mbox{\ensuremath{\displaystyle #2}}}}
\newcommand\blfootnote[1]{%
  \begingroup
  \renewcommand\thefootnote{}\footnote{#1}%
  \addtocounter{footnote}{-1}%
  \endgroup
}
\begin{document}
%\linenumbers

\title{The index of Lie poset algebras }

\author[*]{Vincent E. Coll, Jr.}
\author[*]{Nick W. Mayers}

\affil[*]{Department of Mathematics, Lehigh University, Bethlehem, PA, 18015}

\maketitle

\begin{center}
\textit{Dedicated to Murray Gerstenhaber - teacher and mentor}
\end{center}
%%%%%%%%%%%%%%%%%%%%%%%%%%%%%%%%%%%%%%%%%%%
\begin{abstract}
\noindent
We provide general closed-form formulas for the index of type-A Lie poset algebras 
corresponding to posets of restricted height.  Furthermore, we provide a combinatorial recipe for constructing all posets corresponding to type-A Frobenius Lie poset algebras of heights zero, one, and two. A finite Morse theory argument establishes that the simplicial realization of such posets is contractible.  It then follows, from a recent theorem of Coll and Gerstenhaber, that the second Lie cohomology group of the corresponding Lie poset algebra with coefficients in itself is zero.   Consequently, such a Lie poset algebra is absolutely rigid and cannot be deformed. We also provide matrix representations for Lie poset algebras in the other classical types. By so doing, we are able to give examples of deformable Lie algebras which are both solvable and Frobenius. This resolves a question of Gerstenhaber and Giaquinto about the existence of such algebras.
\end{abstract}
\blfootnote{\textit{E-mail addresses:} vec208@lehigh.edu (V. Coll), nwm215@lehigh.edu (N. Mayers).}

\noindent
\textit{Mathematics Subject Classification 2010}: 17B20, 05E15

\noindent 
\textit{Key Words and Phrases}: Frobenius Lie algebra, seaweed, poset algebra, incidence algebra, discrete Morse theory, deformations, index. Lie poset algebra

%%%%%%%%%%%%%%%%%%%%%%%%%%%%%%%%%%%%%%%%%%%%%%%%%%%
%%%%%%%%%%%%%%%%%%%%%%%%%%%%%%%%%%%%%%%%%%%%%%%%%%%
\section{Introduction}
%%%%%%%%%%%%%%%%%%%%%%%%%%%%%%%%%%%%%%%%%%%%%%%%%%%
%%%%%%%%%%%%%%%%%%%%%%%%%%%%%%%%%%%%%%%%%%%%%%%%%%%

\bigskip  The incidence algebra $I(\mathcal{P},\textbf{k})$ of a finite poset ($\mathcal{P}, \preceq)$ over a field $\textbf{k}$ is an associative $\textbf{k}$-algebra consisting of all functions $f:Int(\mathcal{P})\to \textbf{k}$ mapping closed intervals of $\mathcal{P}$ to $\textbf{k}$ with multiplication given by the convolution product

$$
(f*g)([x, y])=\sum_{x\preceq z\prec y}f([x, z])g([z, y]).
$$

\noindent
By taking a linear extension of $\mathcal{P}$, one can represent $I(\mathcal{P},\textbf{k})$ as a matrix algebra of $|\mathcal{P}|\times|\mathcal{P}|$ matrices. For $f\in I(\mathcal{P},\textbf{k})$, the matrix representing $f$ has $i,j$-entry equal to $f([i,j])$ and, with this choice of representation, the convolution product between elements of $I(\mathcal{P},\textbf{k})$ becomes matrix multiplication. Incidence algebras were introduced into combinatorics in 1964 by Rota as a means of studying inversion-type formulas in a unified way \textbf{\cite{Ro}}. Since then, these algebras have been studied by many authors, from many different perspectives  -- and have been regularly rediscovered and called, variously, $T^3$ algebras, convolution rings of posets, incidence matrix rings of posets, pattern algebras, tic-tac-toe algebras, and poset algebras \textbf{\cite{G4,Lar1,Mi}}. We prefer the name \textit{poset algebras} since posets generate them in much the same way as groups generate group algebras.  

Poset algebras can be naturally endowed with a Lie structure by taking the commutator product. Even so, the study of such Lie poset algebras has only recently been initiated (\textbf{\cite{CG}}, 2016).  In \textbf{\cite{CG}}, Coll and Gerstenhaber define \textit{Lie poset algebras} as those subalgebras of the classical Lie algebras which lie between a Cartan subalgebra and a Borel subalgebra; and go on to compute, in particular, the Lie algebra cohomology of a Lie poset algebra with coefficients in itself. This is the controlling cohomology for the infinitesimal deformations of the Lie poset algebra \textbf{\cite{TG}}. Generally, Lie poset algebras deform  -- even if the underlying associative poset algebra does not -- although the deformed Lie algebra may no longer be a Lie poset algebra.   See \textbf{\cite{CG}} for examples. 

In this article, we are interested in the interaction between the deformation theory of a Lie poset algebra and its index -- with a special emphasis on Lie poset subalgebras of the first classical type;  that is, type $A_{n-1}=\mathfrak{sl}(n)$.  The \textit{index} of a general Lie algebra $\mathfrak{g}$ is an important algebraic invariant introduced by Dixmier (\textbf{\cite{D}}, 1974) and is defined as follows: 

\[\ind \mathfrak{g}=\min_{F\in \mathfrak{g^*}} \dim  (\ker (B_F)),\]

\noindent where $B_F$ is the skew-symmetric \textit{Kirillov form} defined by $B_F(x,y)=F([x,y])$ for all $x,y\in\mathfrak{g}$.

Combinatorial methods for the computation of a Lie algebra's index are
of great topical interest \textbf{\cite{Cameron,Coll3,Coll1,CHM,Coll2,DK,Elash,Joseph,Panyushev1,Panyushev2,Panyushev3}}.  In this article, we initiate the study of the index theory for Lie poset algebras, paying special attention to those algebras which have index zero.  Index-zero Lie algebras are called  \textit{Frobenius} and are of particular interest in deformation theory.\footnote{
Suppose $B_F(-,-)$ is non-degenerate on $\mathfrak{g}$ and let $[F]$ be the matrix of $B_F(-,-)$ relative to some basis $\{x_1,\dots,x_n  \}$ of $\mathfrak{g}$; such a functional $F\in\mathfrak{g}^*$ is referred to as \textit{Frobenius}. In \textbf{\cite{BD}}, Belavin and Drinfel'd showed that  $\sum_{i,j}[F]^{-1}_{ij}x_i\wedge x_j$ is the infinitesimal of a \textit{Universal Deformation Formula}. Such formulas can be used to deform the universal enveloping algebra of $\mathfrak{g}$ or the function space on any Lie group which contains $\mathfrak{g}$ in its Lie algebra of derivations.  Thus, each pair consisting of a Frobenius Lie algebra $\mathfrak{g}$ together with a Frobenius functional $F$ provides a constant solution to the classical Yang-Baxter equation (see (\textbf{\cite{G1}}, 1997) and (\textbf{\cite{G2}}, 2008)).} 

%We also provide natural matrix representations for Lie poset algebras in the other classical types (see Example~\ref{ex:typeBCD}). 

We also introduce definitions of posets of types B, C, and D. As with type A, these poset definitions are used to develop natural matrix representations.  Subsequently, we produce an example of a non-rigid Frobenius Lie poset algebra (in types B, C, and D). This resolves a question of Gerstenhaber and Giaquinto about whether a deformable Frobenius Lie algebra could exist \textbf{\cite{Prin}}.
Finally, in an extended epilogue, we examine some other motivations for our study. This includes suggestive results regarding the recently developed spectral theory of Frobenius Lie algebra (see \textbf{\cite{unbroken, specAB, Cameron})}.

\bigskip 
There are two main results in this paper.  The first is the development of closed-form index formulas for type-A Lie poset algebras corresponding to posets of heights zero, one, and two. The non-trivial height-two case is treated in Theorem~\ref{thm:gind}, where the attendant index formula given in equation (\ref{eqn:grind}) may be regarded as the main combinatorial result of this paper.\footnote{This result is a central result in the second author's Ph.D. thesis (in progress) ``The index of Lie poset algebras" at Lehigh University \textbf{\cite{Mayers}}.}  The index formula of Theorem~\ref{thm:gind} subsequently yields a characterization of posets of heights one, and two, which are associated to Frobenius Lie poset algebras.  In height two, this characterization takes the form of a combinatorial recipe -- building blocks and gluing rules -- for the construction of all such posets (see Theorem~\ref{thm:h2frobchar}). A discrete Morse theory argument then establishes that the simplicial complex associated with any such poset $\mathcal{P}$ is contractible, so has no simplicial homology (see Theorem~\ref{Nohomology}). A recent result of Coll and Gerstenhaber (Theorem~\ref{CG}) can then be applied to find that the second Lie cohomology group of the corresponding type-A Lie poset algebra with coefficients in  
itself is zero.\footnote{Theorem~\ref{CG} is the Lie algebraic analogue of the now classical result of Gerstenhaber and Schack which asserts that simplicial cohomology is a special case of  Hochschild cohomology \textbf{\cite{G3}}.  }

Putting all of this together yields the second main result of this paper (see Theorem \ref{thm:main2}). 
\begin{theorem*}

A Frobenius Lie poset subalgebra of $\mathfrak{sl}(n)$ corresponding to a poset of height zero, one, or two is absolutely rigid.
\end{theorem*}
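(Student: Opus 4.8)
The plan is to assemble the theorem from the machinery already announced in the introduction, so the proof is largely a matter of chaining three earlier results in the right order. Let $\mathfrak{g} = \mathfrak{g}(\mathcal{P})$ be a Frobenius type-A Lie poset algebra with $\mathcal{P}$ of height zero, one, or two. First I would invoke the combinatorial characterization: by the index formula of Theorem~\ref{thm:gind} (equation~(\ref{eqn:grind})) and its height-one specialization, the Frobenius hypothesis $\ind\mathfrak{g}=0$ forces $\mathcal{P}$ to be one of the posets produced by the building-block-and-gluing recipe of Theorem~\ref{thm:h2frobchar} (in height zero and one the characterization is immediate/degenerate, and in height two it is exactly the content of that theorem). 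This step is essentially a citation, but I should be careful to note that the height-zero and height-one cases are subsumed: a height-zero poset gives an abelian (hence trivially the formula applies) situation, and height one is the bottom stratum of the height-two recipe.

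Next I would feed this structural information into the topology. By Theorem~\ref{Nohomology}, the order complex (simplicial realization) of any such Frobenius poset $\mathcal{P}$ is contractible; in particular its reduced simplicial homology — equivalently, by the universal coefficient theorem over a field, its simplicial cohomology $H^k(\mathcal{P};\mathbf{k})$ — vanishes in all positive degrees, and in particular $H^1(\mathcal{P};\mathbf{k}) = H^2(\mathcal{P};\mathbf{k}) = 0$. Then I would apply Theorem~\ref{CG} (the Coll–Gerstenhaber result), which identifies the Lie cohomology $H^k(\mathfrak{g};\mathfrak{g})$ with the simplicial cohomology of $\mathcal{P}$ in the relevant range; this gives $H^2(\mathfrak{g};\mathfrak{g}) = 0$. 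Since $H^2(\mathfrak{g};\mathfrak{g})$ is the space of infinitesimal deformations of $\mathfrak{g}$ and its vanishing means every such infinitesimal is obstructed away (indeed there is nothing to obstruct), the Lie algebra $\mathfrak{g}$ admits no nontrivial deformations, i.e.\ it is absolutely rigid. That completes the argument.

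I do not expect a genuine obstacle here, since every hard ingredient has been proved earlier; the only real work is bookkeeping — verifying that the hypotheses of Theorem~\ref{CG} (which presumably requires $\mathcal{P}$ connected, or the algebra to sit between a Cartan and a Borel in the prescribed way) are met by the posets coming out of Theorem~\ref{thm:h2frobchar}, and that the degree ranges line up so that $H^2$ of the algebra really is computed by $H^2$ of the complex rather than some truncation. The subtlest point to state carefully is the passage from ``contractible order complex'' to ``$H^2(\mathfrak{g};\mathfrak{g})=0$'': one must be sure Theorem~\ref{CG} applies in degree two specifically and that contractibility — not merely acyclicity in low degrees — is what Theorem~\ref{Nohomology} delivers. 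If any connectedness hypothesis is needed, a disconnected Frobenius poset would have to be ruled out, but this should already be a consequence of the classification in Theorem~\ref{thm:h2frobchar} (a Frobenius algebra cannot decompose as a direct sum in a way that would force the index to be positive), so I would remark on that and move on.
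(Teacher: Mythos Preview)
Your approach is essentially the paper's own: classification $\Rightarrow$ contractibility (Theorem~\ref{Nohomology}) $\Rightarrow$ vanishing of $H^2(\mathfrak{g};\mathfrak{g})$ via Theorem~\ref{CG} $\Rightarrow$ rigidity. One correction, however: Theorem~\ref{CG} does \emph{not} identify $H^2(\mathfrak{g}_A(\mathcal{P});\mathfrak{g}_A(\mathcal{P}))$ with simplicial cohomology alone. The actual statement is
\[
H^2(\mathfrak{g}_A(\mathcal{P}),\mathfrak{g}_A(\mathcal{P}))=\Big(\textstyle\bigwedge^{2}\mathfrak{h}^{*}\otimes\mathfrak{c}\Big)\ \oplus\ \Big(\mathfrak{h}^{*}\otimes H^1(\Sigma(\mathcal{P}),\mathbf{k})\Big)\ \oplus\ H^2(\Sigma(\mathcal{P}),\mathbf{k}),
\]
so contractibility of $\Sigma(\mathcal{P})$ kills only the last two summands. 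The first summand involves the center $\mathfrak{c}$ of $\mathfrak{g}_A(\mathcal{P})$, and here the Frobenius hypothesis is used a second time: a Frobenius Lie algebra has trivial center (any central element lies in $\ker B_F$ for every $F$), so $\bigwedge^{2}\mathfrak{h}^{*}\otimes\mathfrak{c}=0$. This is exactly the ``bookkeeping'' you anticipated, but it is not a matter of degree ranges lining up---it is a separate vanishing that does not come from the topology of $\Sigma(\mathcal{P})$ at all. Once you insert this observation, your argument matches the paper's.
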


\begin{remark}  Extensive simulations suggest that the above theorem is true for all heights.  See the commentary at the end of Section~\ref{sec:rigid}.  Also note that since all Lie poset algebras are solvable, this theorem, in conjunction with Theorems~\ref{thm:h01FC} and \ref{thm:npurechar}, allows for the production of non-trivial examples of solvable Lie algebras which are absolutely rigid.
\end{remark}

 %%%%%%%%%%%%%%%%%%%%%%%%%%%%%%%%%%%%%
\section{Lie poset algebras}
%%%%%%%%%%%%%%%%%%%%%%%%%%%%%%%%%%%%%
In this section, and following \textbf{\cite{CG}}, we provide a general definition of Lie poset algebras applicable to any Chevalley-type Lie algebra.  We specialize to Lie poset subalgebras of $\mathfrak{sl}(n)$ by providing explicit matrix representations.  Throughout, assume that \textbf{k} is an algebraically closed field of characteristic zero, which we may take to be the complex numbers.

\bigskip
Let $(\mathcal{P}, \preceq_{\mathcal{P}})$ be a finite poset with partial order $\preceq_{\mathcal{P}}$. (It will cause no confusion to simply write $\mathcal{P}$ when the partial ordering is understood and we will usually suppress the subscript in $\preceq_{\mathcal{P}}$.) The \textit{associative poset algebra} $A(\mathcal{P})=A(\mathcal{P}, \textbf{k})$ is the span over $\textbf{k}$ of elements $e_{ij}$, $i\preceq j$ with multiplication given by setting $e_{ij}e_{kl}=e_{il}$ if $j=k$, and $0$ otherwise.  The \textit{trace} of an element $\sum c_{ij}e_{ij}$ is $\sum c_{ii}.$

We can equip $A(\mathcal{P})$ with the commutator product $[a,b]=ab-ba$, where concatenation denotes the product in $A(\mathcal{P})$, to produce the \textit{Lie poset algebra} $\mathfrak{g}(\mathcal{P})=\mathfrak{g}(\mathcal{P}, \textbf{k})$. If $|\mathcal{P}|=n$, then it is possible to represent $\mathcal{P}$ as a poset on $\{1,\hdots,n\}$, where $\preceq_{\mathcal{P}}$ is compatible with the linear ordering, via an order-preserving bijection. Such a representation of $\mathcal{P}$ is called a \textit{linear extension} of $\mathcal{P}$. Taking a linear extension of $\mathcal{P}$, the associative algebra $A(\mathcal{P})$ and the Lie algebra $\mathfrak{g}(\mathcal{P})$ may be regarded, respectively, as associative and Lie subalgebras of the algebra of all upper triangular $n \times n$ matrices over $\textbf{k}$. Such a matrix representation is realized by replacing each basis element $e_{i,j}$ by the $n\times n$ matrix $E_{i,j}$ containing a 1 in the $i,j$-entry, and 0's elsewhere. The product between elements $e_{i,j}$ is then replaced by matrix multiplication between the $E_{i,j}$; it is well-known that such matrix algebras are invariant under the choice of linear extension of $\mathcal{P}$. Let $\mathfrak{b}$ be the Borel subalgebra of $n\times n$ matrices consisting of upper triangular matrices of trace zero and $\mathfrak{h}$ its Cartan subalgebra of diagonal matrices. Any subalgebra $\mathfrak{g}$ lying between $\mathfrak{h}$ and $\mathfrak{b}$ is then a Lie poset algebra; for $\mathfrak{g}$ is then the span over $\textbf{k}$ of $\mathfrak{h}$ and those $E_{i,j}$ which it contains, and there is a partial order on $\mathcal{P}=\{1,\dots,n\}$ compatible with the linear order by setting $i\preceq j$ whenever $E_{i,j}\in \mathfrak{g}$ (see Example \ref{Stargate}). Restricting $\mathfrak{g}(\mathcal{P})$ to 
trace zero matrices yields a subalgebra of 
the first classical family $A_{n-1}=\mathfrak{sl}(n)$.  We denote the resulting type-A Lie poset algebra by $\mathfrak{g}_A(\mathcal{P})$.  Since $A(\mathcal{P},\textbf{k})$ is isomorphic to $I(\mathcal{P},\textbf{k})$, note that the definition of $\mathfrak{g}_A(\mathcal{P})$ is consistent with the definition of a type-A Lie poset algebra given in the Introduction,

\begin{Ex}\label{Stargate}  
The Hasse diagram of a poset is a graphical representation of the poset with an implied upward orientation.  Consider the poset $\mathcal{P} =$ $\{1,2,3,4\}$ with $1 \preceq 2 \preceq 3,4$ and no relations other than those following from these. The Hasse diagram of $\mathcal{P}$ is illustrated in Figure~\ref{First} \textup(left\textup). The basic form of the matrix algebra $A(\mathcal{P})$, respectively $\mathfrak{g}(\mathcal{P})$, is illustrated in Figure~\ref{First} \textup(right\textup); the *'s indicate possible non-zero entries from $\mathbb{C}$.

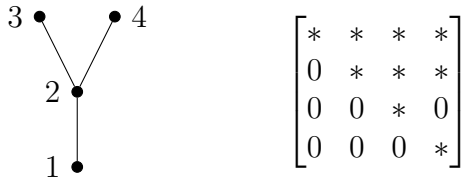
\begin{figure}[H]
$$\begin{tikzpicture}
	\node (1) at (0, 0) [circle, draw = black, fill = black, inner sep = 0.5mm, label=left:{1}]{};
	\node (2) at (0, 1)[circle, draw = black, fill = black, inner sep = 0.5mm, label=left:{2}] {};
	\node (3) at (-0.5, 2) [circle, draw = black, fill = black, inner sep = 0.5mm, label=left:{3}] {};
	\node (4) at (0.5, 2) [circle, draw = black, fill = black, inner sep = 0.5mm, label=right:{4}] {};
	\node (5) at (4,1) {$\begin{bmatrix}
   * & * & * & *  \\
   0 & * & * & * \\
   0 & 0 & * & 0 \\
   0 & 0 & 0 & *
\end{bmatrix}$};
    \draw (1)--(2);
    \draw (2)--(3);
    \draw (2)--(4);
    \addvmargin{1mm}
\end{tikzpicture}$$
\caption{Hasse diagram of $\mathcal{P}$ (left) and associated matrix algebra (right)}\label{First}
\end{figure}
\end{Ex}

\noindent
We continue to set the combinatorial notation.

Let $Rel(\mathcal{P})$ denote the set of strict relations between elements of $\mathcal{P}$, $Ext(\mathcal{P})$ denote the set of minimal and maximal elements of $\mathcal{P}$, and $Rel_E(\mathcal{P})$ denote the number of strict relations between the elements of $Ext(\mathcal{P})$.

\begin{Ex}
Let $\mathcal{P}$ be the poset in 
Example~\ref{Stargate}.  We have $$Rel(\mathcal{P})=\{1\prec 2,1\preceq 3,1\preceq 4,2\preceq 3,2\preceq 4\},$$  $$Ext(\mathcal{P})=\{1,3,4\},\quad \text{and}\quad Rel_E(\mathcal{P})=\{1\preceq 3, 1\preceq 4\}.$$
\end{Ex}

\noindent
Recall that, if $x\preceq y$ and there exists no $z\in \mathcal{P}$ satisfying $x,y\neq z$ and $x\preceq z\preceq y$, then $y$ \textit{covers} $x$ and 
$x\preceq y$ is a \textit{covering relation}.  Using this language, the Hasse diagram of a poset $\mathcal{P}$ can be reckoned as the graph whose vertices correspond to elements of $\mathcal{P}$ and whose edges correspond to covering relations. A poset $\mathcal{P}$ is \textit{connected} if the Hasse diagram of $\mathcal{P}$ is connected as a graph. Throughout this paper, $C_{\mathcal{P}}$ will denote the number of connected components of the Hasse diagram of $\mathcal{P}$.

Given a subset $S\subset\mathcal{P}$, the \textit{induced subposet generated by $S$} is the poset $\mathcal{P}_S$ on $S$, where $i\preceq_{\mathcal{P}_S}j$ if and only if $i\preceq_{\mathcal{P}}j$. A totally ordered subset $S\subset\mathcal{P}$ is called a \textit{chain}. Using the chains of a poset $\mathcal{P}$ one can define a simplicial complex $\Sigma(P)$, where the vertices represent the elements of $\mathcal{P}$ and the faces are chains. 

\begin{Ex}
Let $\mathcal{P}$ be the poset of our running Example~\ref{Stargate}.  The simplicial complex $\Sigma(\mathcal{P})$ is illustrated in Figure \ref{fig:simpcomp} below.

\begin{figure}[H]
$$\begin{tikzpicture}
	\node (1) at (0, 0) [circle, draw = black, fill = black, inner sep = 0.5mm, label=below:{1}]{};
	\node (2) at (-1, 0.5)[circle, draw = black, fill = black, inner sep = 0.5mm,label=left:{3}] {};
    \node (3) at (0, 1) [circle, draw = black, fill = black, inner sep = 0.5mm,label=above:{2}]{};
    \node (4) at (1, 0.5) [circle, draw = black, fill = black, inner sep = 0.5mm,label=right:{4}]{};
    \draw [fill=gray,thick] (0, 0)--(-1, 0.5)--(0, 1)--(0, 0);
    \draw [fill=gray,thick] (0, 0)--(1, 0.5)--(0, 1)--(0, 0);
\end{tikzpicture}$$
\caption{Simplicial complex $\Sigma(\mathcal{P})$}\label{fig:simpcomp}
\end{figure}
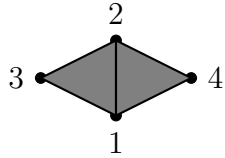
\end{Ex}

\noindent
A chain $S\subset\mathcal{P}$ is called \textit{maximal} if it is not a proper subset of any other chain $S'\subset \mathcal{P}$. If every maximal chain of a poset $\mathcal{P}$ is of the same length, then we call $\mathcal{P}$ \textit{pure}. When a poset is pure, there is a natural grading on the elements of $\mathcal{P}$. This grading is made precise by a rank function $r:\mathcal{P}\to \mathbb{Z}_{\ge 0}$, where minimal elements have rank zero and if $x$ is covered by $y$ in $\mathcal{P}$, then $r(y)=r(x)+1$. Note that the poset of Example 1 is pure since it's maximal chains $1\preceq 2\preceq 3$ and $1\preceq 2\preceq 4$, both have length two; furthermore, this poset has a single minimal element of rank zero, namely $\{1\}$, a single element of rank one, namely $\{2\}$, and two maximal elements of rank two, namely $\{3,4\}$. We define the \textit{height} of a poset $\mathcal{P}$ to be one less than the cardinality of the largest chain in $\mathcal{P}$. Note that when a poset $\mathcal{P}$ is pure, its height is equal to $\max_{x\in \mathcal{P}}r(x)$. 

We end this section with the definition of a family of posets, as well as two poset operations which will be important in the sections that follow.

\begin{definition}\label{def:comppos}
Let $\mathcal{P}$ be the poset with $r_i$ elements of rank $i$, for $0\le i\le t$, and every possible relation between elements of differing rank. We denote such ``complete" posets by $\mathcal{P}(r_0, r_1,\hdots, r_t)$.  See Figure~\ref{fig:non}.
\end{definition}

\begin{figure}[H]
$$\begin{tikzpicture}
	\node (1) at (0, 1) [circle, draw = black, fill = black, inner sep = 0.5mm]{};
	\node (2) at (-1, 0)[circle, draw = black, fill = black, inner sep = 0.5mm] {};
    \node (3) at (0, 0) {$\cdots$};
	\node (4) at (1, 0) [circle, draw = black, fill = black, inner sep = 0.5mm] {};
	\node (5) at (0,-1) {$\mathcal{P}(n,1)$};
    \draw (1)--(2);
    \draw (1)--(3);
    \draw (1)--(4);
\end{tikzpicture}\quad\quad\begin{tikzpicture}
	\node (1) at (0, 0) [circle, draw = black, fill = black, inner sep = 0.5mm]{};
	\node (2) at (-1, 1)[circle, draw = black, fill = black, inner sep = 0.5mm] {};
    \node (3) at (0, 1) {$\cdots$};
	\node (4) at (1, 1) [circle, draw = black, fill = black, inner sep = 0.5mm] {}; \node (5) at (0,-1) {$\mathcal{P}(1,n)$};   
    \draw (1)--(2);
    \draw (1)--(3);
    \draw (1)--(4);
\end{tikzpicture}$$ 
\caption{Complete posets of height one}\label{fig:non}
\end{figure}
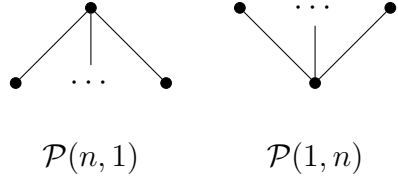

\begin{Ex}
Using the notation of Definition~\ref{def:comppos}, the poset of Example~\ref{Stargate} is $\mathcal{P}(1,1,2)$.
\end{Ex}

\begin{definition}
Given two posets $\mathcal{P}$ and $\mathcal{Q}$ which are disjoint as sets, the \textit{disjoint union} of $\mathcal{P}$ and $\mathcal{Q}$ is the poset $\mathcal{P}+\mathcal{Q}$ on the union $\mathcal{P}\cup \mathcal{Q}$ such that $s\le t$ in $\mathcal{P}+\mathcal{Q}$ if either 
\begin{enumerate}[label=\textup(\roman*\textup)]
    \item $s,t\in \mathcal{P}$ and $s\preceq_{\mathcal{P}} t$, or
    \item $s,t\in \mathcal{Q}$ and $s\preceq_{\mathcal{Q}} t$.
\end{enumerate}
Note: When there are more than two posets 
$\mathcal{P}_i$, for $1\le i\le n$, we will denote their disjoint union as $\sum_{i=1}^n\mathcal{P}_i$.
\end{definition}

\begin{definition}
If $\mathcal{P}$ is a poset, define its dual poset $\mathcal{P}^*$ by the following rules

\begin{enumerate}[label=\textup(\roman*\textup)]
    \item $i,j\in \mathcal{P}^*$ if $i,j\in \mathcal{P}$ and
    \item $j\preceq_{\mathcal{P}^*}i$ if $i\preceq_{\mathcal{P}}j$.
\end{enumerate}
\end{definition}

\section{Combinatorial index formulas}

In this section, we develop general closed-form formulas for the index of type-A Lie poset algebras corresponding to posets of height zero (Theorem \ref{h0ind}) as well as heights one and two (Theorem \ref{thm:gind}).  

It will be convenient to use an 
alternative characterization of the index. Let $\mathfrak{g}$ be an arbitrary Lie algebra with basis $\{x_1,...,x_n\}$.  The index of $\mathfrak{g}$ can be expressed using the \textit{commutator matrix}, $C(\mathfrak{g})=([x_i,x_j])_{1\le i<j\le n}$, over the quotient field $R(\mathfrak{g})$ of the symmetric algebra $Sym(\mathfrak{g})$ as follows (see \textbf{\cite{D}}).

\begin{theorem}\label{thm:commat}
 The index of $\mathfrak{g}$ is given by  
 
 $$\ind \mathfrak{g}= n-Rank_{R(\mathfrak{g})}C(\mathfrak{g}).$$ 
\end{theorem}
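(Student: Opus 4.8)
The plan is to reduce the statement to Dixmier's original characterization of the index via the Kirillov form, then translate the rank condition into a statement about the commutator matrix over the function field $R(\mathfrak{g})$. First I would recall that, by definition, $\ind\mathfrak{g} = \min_{F\in\mathfrak{g}^*}\dim\ker B_F$, where $B_F(x,y)=F([x,y])$. Writing $[x_i,x_j] = \sum_k c_{ij}^k x_k$ for the structure constants, the matrix of $B_F$ relative to the basis $\{x_1,\dots,x_n\}$ has $(i,j)$-entry $F([x_i,x_j]) = \sum_k c_{ij}^k F(x_k)$; hence $\dim\ker B_F = n - \rank [B_F]$, and $\ind\mathfrak{g} = n - \max_{F}\rank[B_F]$.

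Next I would identify the commutator matrix $C(\mathfrak{g}) = ([x_i,x_j])$ with the ``generic'' version of $[B_F]$: regarding each basis element $x_k$ as an indeterminate in $Sym(\mathfrak{g})$, the entry $[x_i,x_j] = \sum_k c_{ij}^k x_k$ is a linear form, and evaluating the $x_k$ at the scalars $F(x_k)$ recovers $[B_F]$. So $C(\mathfrak{g})$ is precisely the matrix whose specializations at points of $\mathfrak{g}^*$ give the Kirillov-form matrices. The key step is then the standard fact from commutative algebra that for a matrix with entries in a polynomial ring (here $Sym(\mathfrak{g})$) over the infinite field $\mathbf{k}$, the rank over the fraction field $R(\mathfrak{g})$ equals the maximum of the ranks of its specializations at $\mathbf{k}$-points; equivalently, the locus where a given-size minor is nonzero is a nonempty Zariski-open set exactly when that minor is not the zero polynomial. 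Combining this with the previous paragraph gives $\max_F \rank[B_F] = \rank_{R(\mathfrak{g})} C(\mathfrak{g})$, and substituting into $\ind\mathfrak{g} = n - \max_F\rank[B_F]$ yields the claim.

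The main obstacle — really the only substantive point — is justifying the passage from ``rank over the function field'' to ``maximal rank among scalar specializations.'' One must check that the minors of $C(\mathfrak{g})$, which a priori are polynomials over $\mathbf{k}$, actually attain nonzero values at honest points of $\mathfrak{g}^*$; this uses that $\mathbf{k}$ is infinite (indeed we have assumed $\mathbf{k} = \mathbb{C}$), so a nonzero polynomial is not identically zero as a function. In fact this direction is classical and due to Dixmier, so I would simply cite \textbf{\cite{D}} for it rather than reproving it, noting only that the inequality $\ind\mathfrak{g} \le n - \rank_{R(\mathfrak{g})}C(\mathfrak{g})$ is immediate (any specialization has rank at most the generic rank, so every $\dim\ker B_F \ge n - \rank_{R}C(\mathfrak{g})$), while the reverse inequality is the content requiring the genericity argument above.
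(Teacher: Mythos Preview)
Your proposal is correct and is essentially the standard argument; the paper itself does not prove Theorem~\ref{thm:commat} at all but simply states it with a citation to Dixmier \textbf{\cite{D}}, which is exactly the reference you invoke at the end of your sketch.
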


\begin{Ex}
Consider $\mathfrak{g}_A(\mathcal{P}(1,1))$; that is, 
the collection of upper triangular matrices in $\mathfrak{sl}(2)$. A Chevalley basis for $\mathfrak{g}_A(\mathcal{P}(1,1))$ is given by $\{x_1,x_2\}$, where $[x_1,x_2]=2x_2$. The standard  matrix representation of $\mathfrak{g}_A(\mathcal{P}(1,1))$ is illustrated in Figure~\ref{ex:commatsl2}.  Since the rank of this matrix is two, it follows from Theorem~\ref{thm:commat} that $\mathfrak{g}_A(\mathcal{P}(1,1))$ is Frobenius.
\begin{figure}[H]
$$\begin{bmatrix}
   0 & 2x_2  \\
     -2x_2 & 0 
\end{bmatrix}$$
\caption{$C(\mathfrak{g}_A(\mathcal{P}(1,1)))$}\label{ex:commatsl2}
\end{figure}
\end{Ex}

\subsection{A matrix reduction}\label{sec:ifg}
Assume for the moment that $\mathcal{P}$ is connected.
To better understand $C(\mathfrak{g}(\mathcal{P}))$, we develop a basis 
for $\mathfrak{g}(\mathcal{P})$, where, following a certain reduction algorithm, $C(\mathfrak{g}(\mathcal{P}))$ nicely reduces to the equivalent matrix $C'(\mathfrak{g}(\mathcal{P}))$.  In $C'(\mathfrak{g}(\mathcal{P}))$ there are $Rel_E(\mathcal{P})-Ext(\mathcal{P})+1$ zero rows on top,  $I_k$ is the $k\times k$ identity matrix with $k=|\mathcal{P}|+|Ext(\mathcal{P})|-2$, and $B_i$, for $i=1,2,$ and $3$, are certain block matrices whose rank contributions are computable.  See Figure \ref{fig:matform}.

\begin{figure}[H]\label{reduced}
$$\begin{tikzpicture}
    \node at (2,0) {$\begin{bmatrix}
   0 & 0 & 0 & 0  \\
   $\vdots$ & $\vdots$ & $\vdots$ & $\vdots$  \\
   0 & 0 & 0 & 0  \\
   0 & I_k & B_1 & B_2 \\
   0 & 0 & 0 & B_3 \\ 
\end{bmatrix}$};
  \node at (-0.75,0) {$C'(\mathfrak{g}(\mathcal{P}))=$};
\end{tikzpicture}$$
\caption{Matrix form of $C'(\mathfrak{g}(\mathcal{P}))$}\label{fig:matform}
\end{figure}

All of the commutator matrix calculations will be done in $\mathfrak{gl}(n)$, and will be facilitated by a basis for $\mathfrak{g}(\mathcal{P})$ defined as follows. Let $E_{i,j}$ denote the $n\times n$ matrix with a 1 in the $i,j$ position, and 0's elsewhere.  Now, define the basis
$$\mathscr{B}=\bigg\{\sum_{i\in\mathcal{P}} E_{i,i}\bigg\}\cup\{E_{i,i}~|~1<i\in \mathcal{P}\}\cup\{E_{i,j}~|~i,j\in \mathcal{P}\text{ such that }i\preceq j\}.$$
Note that the row corresponding to $\sum E_{i,i}$ in $C(\mathfrak{g}(\mathcal{P}))$ is a zero row and therefore contributes $+1$ to the index of $\mathfrak{g}(\mathcal{P})$. Ignoring the contribution of this row to the index results in the index upon restriction to $\mathfrak{sl}(n)$\footnote{To see this, for a poset $\mathcal{P}$ consider the basis for $\mathfrak{g}(\mathcal{P})$ given by $\mathscr{B}$ with the basis elements $E_{i,i}$, for $i\in\mathcal{P}$, replaced by $E_{j,j}-E_{n,n}$, for $n\in\mathcal{P}$ and $n\neq j\in\mathcal{P}$. Removing the basis element $\sum_{i\in\mathcal{P}}E_{i,i}$ results in a basis for $\mathfrak{g}_A(\mathcal{P})$. Furthermore, the commutator matrices with respect to these two matrices differ only by an extra zero row and column in the commutator matrix corresponding to $\mathfrak{g}(\mathcal{P})$.}.

To ease notation, row and column labels will be bolded and matrix entries (elements of $R(\mathfrak{g}(\mathcal{P}))$) will be unbolded. Furthermore, we will refer to the row corresponding to $\mathbf{E_{i,j}}$ in $C(\mathfrak{g}(\mathcal{P}))$ -- and by a slight abuse of notation, in any equivalent matrix -- as row $\mathbf{E_{i,j}}$.

We relegate the formal steps in the general matrix reduction which transforms $C(\mathfrak{g}_A(\mathcal{P}))$ into $C'(\mathfrak{g}_A(\mathcal{P}))$ to Appendix A.

\begin{Ex}  Using the set basis $\mathscr{B}$ defined above, 
the commutator matrix of our running example $\mathfrak{g}(\mathcal{P}(1,1,2))$ is illustrated in Figure 4.   Note that without further reduction, the rank of $C(\mathfrak{g}(\mathcal{P}(1,1,2))$ is unclear. 
\begin{figure}[H]
\[
  \kbordermatrix{
    & \sum \mathbf{E_{i,i}} & \mathbf{E_{2,2}} & \mathbf{E_{3,3}} & \mathbf{E_{4,4}} & \mathbf{E_{1,2}} & \mathbf{E_{1,3}} & \mathbf{E_{1,4}} & \mathbf{E_{2,3}} & \mathbf{E_{2,4}} \\
    \sum \mathbf{E_{i,i}} & 0 & 0 & 0 & 0 & 0 & 0 & 0 & 0 & 0 \\
    \mathbf{E_{2,2}} & 0 & 0 & 0 & 0 & -E_{1,2} & 0 & 0 & E_{2,3} & E_{2,4} \\
    \mathbf{E_{3,3}} & 0 & 0 & 0 & 0 & 0 & -E_{1,3} & 0 & -E_{2,3} & 0 \\
    \mathbf{E_{4,4}} & 0 & 0 & 0 & 0 & 0 & 0 & -E_{1,4} & 0 & -E_{2,4} \\
    \mathbf{E_{1,2}} & 0 & E_{1,2} & 0 & 0 & 0 & 0 & 0 & E_{1,3} & E_{1,4} \\
    \mathbf{E_{1,3}} & 0 & 0 & E_{1,3} & 0 & 0 & 0 & 0 & 0 & 0 \\
    \mathbf{E_{1,4}} & 0 & 0 & 0 & E_{1,4} & 0 & 0 & 0 & 0 & 0 \\
    \mathbf{E_{2,3}} & 0 & -E_{2,3} & E_{2,3} & 0 & -E_{1,3} & 0 & 0 & 0 & 0 \\
    \mathbf{E_{2,4}} & 0 & -E_{2,4} & 0 & E_{2,4} & -E_{1,4} & 0 & 0 & 0 & 0 \\
  }
\]
\caption{$C(\mathfrak{g}(\mathcal{P}(1,1,2))$}\label{fig:commatrix}
\end{figure}
\noindent
Below we illustrate $C'(\mathfrak{g}(\mathcal{P}(1,1,2)))$. Note that in this case there is no $B_1$ block, since such a block only exists when $\mathcal{P}$ has more than one minimal element. Further, note that the computation of the index of $\mathfrak{g}(\mathcal{P}(1,1,2))$ has been reduced to understanding the rank of the red block in $C'(\mathfrak{g}(\mathcal{P}(1,1,2)))$. See Figure~\ref{fig:commatrixrr}.
\begin{figure}[H]
\[
  \kbordermatrix{
    & \sum \mathbf{E_{i,i}} &  & \mathbf{E_{2,2}} & \mathbf{E_{3,3}} & \mathbf{E_{4,4}} & \mathbf{E_{1,3}} & \mathbf{E_{1,4}} &  & \mathbf{E_{1,2}} & \mathbf{E_{2,3}} & \mathbf{E_{2,4}} \\
    \sum \mathbf{E_{i,i}} & 0 & \vrule  & 0 & 0 & 0 & 0 & 0 & \vrule & 0 & 0 & 0 \\ \cline{2-12}
    \mathbf{E_{1,2}} & 0 & \vrule & 1 & 0 & 0 & 0 & 0 & \vrule & 0 & \frac{E_{1,3}}{E_{1,2}} & \frac{E_{1,4}}{E_{1,2}} \\
    \mathbf{E_{1,3}} & 0 & \vrule  & 0 & 1 & 0 & 0 & 0 & \vrule & 0 & 0 & 0 \\
    \mathbf{E_{1,4}} & 0 & \vrule  & 0 & 0 & 1 & 0 & 0 & \vrule & 0 & 0 & 0 \\
    \mathbf{E_{3,3}} & 0 & \vrule  & 0 & 0 & 0 & 1 & 0 & \vrule & 0 & \frac{E_{2,3}}{E_{1,3}} & 0 \\
    \mathbf{E_{4,4}} & 0 & \vrule  & 0 & 0 & 0 & 0 & 1 & \vrule & 0 & 0 & \frac{E_{2,4}}{E_{1,4}} \\ \cline{2-12}
    \mathbf{E_{2,2}} & 0 & \vrule &  0 & 0 & 0 & 0 & 0 & \vrule & \rn{-E_{1,2}} & \rn{E_{2,3}} & \rn{E_{2,4}} \\
    \mathbf{E_{2,3}} & 0 & \vrule & 0 & 0 & 0 & 0 & 0 & \vrule & \rn{-E_{1,3}} & \rn{\frac{E_{2,3}}{E_{1,2}}E_{1,3}} & \rn{\frac{E_{2,3}}{E_{1,2}}E_{1,4}} \\
    \mathbf{E_{2,4}} & 0 & \vrule & 0 & 0 & 0 & 0 & 0 & \vrule & \rn{-E_{1,4}} & \rn{\frac{E_{2,4}}{E_{1,2}}E_{1,3}} & \rn{\frac{E_{2,4}}{E_{1,2}}E_{1,4}} \\
  }
\]
\caption{$C'(\mathfrak{g}(\mathcal{P}(1,1,2))$}\label{fig:commatrixrr}
\end{figure}
\end{Ex}

\begin{remark}\label{rem:contminmax}
Using $C'(\mathfrak{g}(\mathcal{P}))$, rows corresponding to basis elements of the form $\mathbf{E_{i,j}}$, for $i,j\in Ext(\mathcal{P})$, contribute $$|Rel_E(\mathcal{P})|-|Ext(\mathcal{P})|+1$$ to the index of $\mathfrak{g}_A(\mathcal{P})$. Furthermore, Theorem~\ref{thm:commat} can be restated for type-A Lie poset algebras $\mathfrak{g}_A(\mathcal{P})$ as
$$\ind(\mathfrak{g}_A(\mathcal{P}))=\dim(B_3(\mathcal{P}))-Rank_{R(\mathfrak{g})}(B_3(\mathcal{P}))+|Rel_E(\mathcal{P})|-|Ext(\mathcal{P})|+1,$$ where $B_3(\mathcal{P})$ denotes the $B_3$ block of $C'(\mathfrak{g}(\mathcal{P}))$.
\end{remark}

\begin{remark}\label{rem:h2blocks}
When $\mathcal{P}$ is of height two, the $B_3$  block of $C'(\mathfrak{g}(\mathcal{P}))$ is block diagonal with each block on the diagonal corresponding to $i\in\mathcal{P}\backslash Ext(\mathcal{P})$. These blocks are defined by rows $\mathbf{E_{k_1,i}}$, $\mathbf{E_{i,i}}$, and $\mathbf{E_{i,k_2}}$, for $k_1,k_2\in \mathcal{P}$ such that $k_1$ is non-minimal in $\mathbb{Z}$ satisfying $k_1\preceq_{\mathcal{P}}i$.  To see that the blocks formed by these rows are disjoint, note that such rows have nonzero entries in columns of the form $\mathbf{E_{j_1,i}}$ and $\mathbf{E_{i,j_2}}$, for $j_1,j_2\in\mathcal{P}$. Such a block is highlighted in red in Figure~\ref{fig:commatrixrr}. We denote the block corresponding to $i\in\mathcal{P}\backslash Ext(\mathcal{P})$ in $B_3$ by $B_3(\mathcal{P},i)$.
\end{remark}

\subsection{Index formulas}\label{sec:if}

\begin{theorem}\label{h0ind}
If $\mathcal{P}$ is a height-zero poset, then 

\begin{eqnarray}\label{one}
\ind(\mathfrak{g}_A(\mathcal{P}))=|\mathcal{P}|-1.
\end{eqnarray}
\end{theorem}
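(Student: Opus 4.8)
The plan is to compute the index directly from Theorem~\ref{thm:commat} by exploiting the fact that a height-zero poset has no strict relations whatsoever: every pair of distinct elements is incomparable, so $\mathcal{P}$ is an antichain on $|\mathcal{P}|$ points. Consequently $A(\mathcal{P})$ is spanned only by the diagonal idempotents $e_{i,i}$, and the Lie poset algebra $\mathfrak{g}(\mathcal{P})$ is abelian of dimension $|\mathcal{P}|$; passing to trace zero, $\mathfrak{g}_A(\mathcal{P})$ is abelian of dimension $|\mathcal{P}|-1$. An abelian Lie algebra has zero commutator matrix, so $Rank_{R(\mathfrak{g})}C(\mathfrak{g}_A(\mathcal{P}))=0$, and Theorem~\ref{thm:commat} gives $\ind(\mathfrak{g}_A(\mathcal{P}))=\dim\mathfrak{g}_A(\mathcal{P})-0=|\mathcal{P}|-1$, which is exactly equation~(\ref{one}).

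Alternatively, and in a way that anticipates the machinery used for the height-one and height-two cases, I would phrase this through Remark~\ref{rem:contminmax}. For a height-zero poset, every element is simultaneously minimal and maximal, so $Ext(\mathcal{P})=\mathcal{P}$ and there are no strict relations, whence $|Rel_E(\mathcal{P})|=0$ and $|Ext(\mathcal{P})|=|\mathcal{P}|$. Moreover $\mathcal{P}\backslash Ext(\mathcal{P})=\emptyset$, so the $B_3$ block is empty and contributes nothing. The formula of Remark~\ref{rem:contminmax} then reads $\ind(\mathfrak{g}_A(\mathcal{P}))=0-0+0-|\mathcal{P}|+1$, which is negative and hence cannot be correct as stated for disconnected posets — so one must be slightly careful here, since the reduction in Section~\ref{sec:ifg} was set up under the standing assumption that $\mathcal{P}$ is connected. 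A connected height-zero poset is a single point, giving index $0=|\mathcal{P}|-1$; the general case then follows because the index of a direct sum of Lie algebras is the sum of the indices, and $\mathfrak{g}(\mathcal{P})=\bigoplus_{i}\mathfrak{g}(\mathcal{P}_i)$ over the connected components (with the trace-zero bookkeeping accounting for the single $-1$). I would therefore present the direct abelian-algebra argument as the clean proof and mention the $Ext$/$Rel_E$ reading only as a sanity check.

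There is essentially no obstacle here: the only thing to watch is the interaction between the connectedness hypothesis implicit in the matrix-reduction subsection and the fact that a height-zero poset with more than one element is necessarily disconnected. Handling this is a one-line invocation of additivity of the index over direct sums of ideals, together with the observation that the $\mathfrak{sl}(n)$ trace condition removes exactly one dimension from the Cartan. I would write the proof in two sentences: identify $\mathfrak{g}_A(\mathcal{P})$ as an abelian Lie algebra of dimension $|\mathcal{P}|-1$, and conclude from Theorem~\ref{thm:commat} that its index equals its dimension.
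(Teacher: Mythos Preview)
Your core argument is correct and matches the paper's proof exactly: the paper simply observes that $\mathfrak{g}_A(\mathcal{P})$ is commutative and invokes Theorem~\ref{thm:commat}. Your digression into Remark~\ref{rem:contminmax} is unnecessary (and, as you noticed, that remark's formula presupposes connectedness), so the final two-sentence version you propose is precisely what the paper does.
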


\begin{proof}
Since $\mathfrak{g}_A(\mathcal{P})$ is necessarily commutative, equation (\ref{one}) follows from Theorem~\ref{thm:commat}.
\end{proof}

\begin{theorem}\label{h1ind}
If $\mathcal{P}$ is a connected, height-one poset, then 

\begin{eqnarray}\label{height1}
\ind(\mathfrak{g}_A(\mathcal{P}))=|Rel(\mathcal{P})|-|\mathcal{P}|+1.
\end{eqnarray}

\end{theorem}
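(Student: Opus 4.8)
The plan is to read the index straight off the reduced commutator matrix $C'(\mathfrak{g}(\mathcal{P}))$ developed in Section~\ref{sec:ifg}, exploiting the fact that a poset of height one has no ``interior'' elements. Concretely, if $\mathcal{P}$ has height one then every element of $\mathcal{P}$ is minimal or maximal, so $Ext(\mathcal{P})=\mathcal{P}$ and $\mathcal{P}\setminus Ext(\mathcal{P})=\emptyset$. Since $\mathcal{P}$ is assumed connected, the reduction of Section~\ref{sec:ifg} (carried out in full in Appendix~A) applies, putting $C(\mathfrak{g}_A(\mathcal{P}))$ into the block form of Figure~\ref{fig:matform} and yielding the restatement of Theorem~\ref{thm:commat} recorded in Remark~\ref{rem:contminmax},
$$\ind(\mathfrak{g}_A(\mathcal{P}))=\dim(B_3(\mathcal{P}))-Rank_{R(\mathfrak{g})}(B_3(\mathcal{P}))+|Rel_E(\mathcal{P})|-|Ext(\mathcal{P})|+1.$$

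The next step is to observe that for a height-one poset the block $B_3(\mathcal{P})$ is empty. Indeed, every off-diagonal basis element $\mathbf{E_{i,j}}\in\mathscr{B}$ satisfies $i,j\in Ext(\mathcal{P})$, so the rows that would populate $B_3$ simply do not occur; equivalently, in the language of Remark~\ref{rem:h2blocks} the (block-diagonal) matrix $B_3$ has one diagonal block for each $i\in\mathcal{P}\setminus Ext(\mathcal{P})$, and there are none. Either way $B_3(\mathcal{P})$ is the empty matrix, so $\dim(B_3(\mathcal{P}))=Rank(B_3(\mathcal{P}))=0$. It then remains only to identify the two combinatorial quantities: $Ext(\mathcal{P})=\mathcal{P}$ gives $|Ext(\mathcal{P})|=|\mathcal{P}|$, and since every strict relation of a height-one poset is a relation between extremal elements we have $Rel_E(\mathcal{P})=Rel(\mathcal{P})$, hence $|Rel_E(\mathcal{P})|=|Rel(\mathcal{P})|$. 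Substituting,
$$\ind(\mathfrak{g}_A(\mathcal{P}))=0-0+|Rel(\mathcal{P})|-|\mathcal{P}|+1=|Rel(\mathcal{P})|-|\mathcal{P}|+1,$$
which is (\ref{height1}).

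I do not expect a serious obstacle: all the substantive work lives in the already-established reduction to $C'$. The only points requiring care are the bookkeeping inside that reduction — in particular checking that for height-one $\mathcal{P}$ the $|\mathcal{P}|-1$ rows $\mathbf{E_{i,i}}$ (for $1<i$) together with the non-vanishing rows $\mathbf{E_{i,j}}$ are exactly absorbed into the identity block $I_k$ with $k=|\mathcal{P}|+|Ext(\mathcal{P})|-2=2|\mathcal{P}|-2$, leaving nothing over to form $B_3$ — and the (harmless) identification $Rel_E(\mathcal{P})=Rel(\mathcal{P})$, which follows purely from the definition of height one. A self-contained alternative would be to note that the nilradical of $\mathfrak{g}_A(\mathcal{P})$ is abelian in this case and compute $Rank\,C(\mathfrak{g}_A(\mathcal{P}))$ by hand, but routing through Remark~\ref{rem:contminmax} is cleaner and is exactly the specialization of the machinery that the height-two Theorem~\ref{thm:gind} will use.
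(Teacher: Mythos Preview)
Your proposal is correct and follows essentially the same approach as the paper: observe that in height one every element is extremal so there is no $B_3$ block, and then read the formula off Remark~\ref{rem:contminmax} using $Ext(\mathcal{P})=\mathcal{P}$ and $Rel_E(\mathcal{P})=Rel(\mathcal{P})$. The paper's proof is just a one-line version of what you wrote.
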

\begin{proof}
Since every element of $\mathcal{P}$ is an element of $Ext(\mathcal{P})$, the matrix $C'(\mathfrak{g}(\mathcal{P}))$ contains no $B_3$ block and equation (\ref{height1}) follows from Remark~\ref{rem:contminmax}.
\end{proof}

\begin{Ex}\label{ex:h1}
Consider the height-one posets illustrated in Figure~\ref{fig:non}.  
Note that
$|\mathcal{P}(1,n)|=|\mathcal{P}(n,1)|=n+1$ and $|Rel(\mathcal{P}(1,n))|=|Rel(\mathcal{P}(n,1))|=n$.  It now follows from Theorem \ref{h1ind} that $\mathfrak{g}_A(\mathcal{P}(1,n))$ and $\mathfrak{g}_A(\mathcal{P}(n,1))$ are Frobenius.
\end{Ex}

To describe index formulas in height two requires a bit more notation.  The following definition applies to a poset of any height.

\begin{definition}
Let $\mathcal{P}$ be a poset and $j\in \mathcal{P}$.  Define 

$$D(\mathcal{P},j)=|\{i\in \mathcal{P}|i\preceq j\}|,$$ 

$$U(\mathcal{P},j)=|\{i\in \mathcal{P}|j\preceq i\}|,$$ and 

$$UD(\mathcal{P},j) =  \begin{cases} 
      |U(\mathcal{P},j)-D(\mathcal{P},j)|, & U(\mathcal{P},j)\neq D(\mathcal{P},j); \\
      2, & \text{otherwise.}
   \end{cases}
$$
\end{definition}

\begin{Ex}
If $\mathcal{P} = \mathcal{P}(1,1,2)$, then $U(\mathcal{P},2)=2$, $D(\mathcal{P},2)=1$, and $UD(\mathcal{P},2)=1$.
\end{Ex}

We now have the following concise result.

\begin{theorem}\label{thm:h2pure}
If $\mathcal{P}$ is a connected, height-two poset, then 

\begin{eqnarray}\label{height2}
\ind(\mathfrak{g}_A(\mathcal{P}))=Rel_E(\mathcal{P})-|\mathcal{P}|+1+\sum_{j\in \mathcal{P}\backslash Ext(\mathcal{P})}UD(\mathcal{P},j).
\end{eqnarray}
\end{theorem}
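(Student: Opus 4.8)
The plan is to deduce Theorem~\ref{thm:h2pure} from Remark~\ref{rem:contminmax} and Remark~\ref{rem:h2blocks} by analyzing the rank contribution of each diagonal block $B_3(\mathcal{P},j)$ of $B_3(\mathcal{P})$ individually. By Remark~\ref{rem:contminmax}, it suffices to show that
$$\dim(B_3(\mathcal{P}))-Rank_{R(\mathfrak{g})}(B_3(\mathcal{P}))=\sum_{j\in\mathcal{P}\backslash Ext(\mathcal{P})}UD(\mathcal{P},j),$$
and since (by Remark~\ref{rem:h2blocks}) $B_3(\mathcal{P})$ is block diagonal with blocks indexed by the non-extremal elements $j$, this reduces further to the claim that for each fixed $j\in\mathcal{P}\backslash Ext(\mathcal{P})$,
$$\dim(B_3(\mathcal{P},j))-Rank_{R(\mathfrak{g})}(B_3(\mathcal{P},j))=UD(\mathcal{P},j).$$
So the whole theorem collapses to a local, single-block computation.

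First I would pin down the size and explicit entries of $B_3(\mathcal{P},j)$. Since $\mathcal{P}$ has height two and $j$ is non-extremal, $j$ has rank one: it lies above some set of rank-zero elements (of size $D(\mathcal{P},j)-1$ proper predecessors, so $D := D(\mathcal{P},j)$ counting $j$ itself) and below some set of rank-two elements (of size $U := U(\mathcal{P},j)$ counting $j$). Following Remark~\ref{rem:h2blocks}, the rows of $B_3(\mathcal{P},j)$ are $\mathbf{E_{k_1,j}}$ for the predecessors $k_1$ (excluding a minimal one, per the footnote's $\mathfrak{sl}(n)$ normalization and the choice of basis), the diagonal row $\mathbf{E_{j,j}}$, and $\mathbf{E_{j,k_2}}$ for the successors $k_2$; the columns are $\mathbf{E_{j_1,j}}$ and $\mathbf{E_{j,j_2}}$. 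Reading off the bracket relations $[E_{a,b},E_{c,d}]$ in $\mathfrak{gl}(n)$ — exactly as displayed in the worked example of Figure~\ref{fig:commatrixrr} — the block has a predictable shape: the $\mathbf{E_{j,j}}$ row pairs nontrivially against all of the roughly $D-1$ ``down'' columns and all of the $U-1$ ``up'' columns, while each $\mathbf{E_{k_1,j}}$ row and each $\mathbf{E_{j,k_2}}$ row contributes one more independent relation, with the entries being (Laurent) monomials in the variables $E_{\ast,\ast}$. The key structural point is that, over the field $R(\mathfrak{g})$, these monomial entries are generic enough that the rank of $B_3(\mathcal{P},j)$ equals the largest it could possibly be given the zero pattern.

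Then I would compute that maximal rank. The block is essentially an ``arrowhead''-type matrix: one spine row/column (the diagonal $\mathbf{E_{j,j}}$ data) linking a block of size $\sim D-1$ to a block of size $\sim U-1$, glued along a one-dimensional overlap coming from column $\mathbf{E_{1,j}}$ and row $\mathbf{E_{2,j}}$-type identifications seen in the example. Counting rows and columns: $B_3(\mathcal{P},j)$ is $(D+U-1)\times(D+U-2)$ or the transpose-shaped analogue (the exact off-by-one is dictated by the $\mathfrak{sl}$ normalization that deletes one minimal diagonal), and a Gaussian-elimination / Laplace-expansion argument over $R(\mathfrak{g})$ — pivoting first on the identity-like entries coming from the $\mathbf{E_{k_1,j}}$, $\mathbf{E_{j,k_2}}$ rows and then on the remaining $2\times 2$ submatrix built from the $\mathbf{E_{j,j}}$ row and one leftover column — shows the rank equals $\min(D,U)+\min(D,U)-1$ when $D\neq U$ and $2\min(D,U)-2$ when $D=U$. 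Subtracting from the column count $D+U-2$ (= $\dim B_3(\mathcal{P},j)$) yields $|U-D|$ in the first case and $2$ in the second, which is exactly $UD(\mathcal{P},j)$. Summing over $j$ and invoking Remark~\ref{rem:contminmax} finishes the proof.

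The main obstacle I anticipate is the final rank-of-one-block computation: one must verify that the monomial entries do not conspire to drop the rank below the naive maximum (i.e., that no ``accidental'' algebraic dependency occurs over $R(\mathfrak{g})$), and one must carefully handle the $D=U$ case, where the generic rank genuinely drops by one relative to $D\neq U$ — this is precisely the source of the ``$2$'' branch in the definition of $UD$. I would handle the first issue by exhibiting an explicit nonzero maximal minor (a product of distinct variables, hence nonzero in $R(\mathfrak{g})$), and the second by a direct $2\times2$ determinant computation on the reduced arrowhead core, where the $D=U$ degeneration manifests as the vanishing of a single determinant that is nonzero when $D\neq U$. Bookkeeping the $\mathfrak{sl}(n)$-versus-$\mathfrak{gl}(n)$ index shift consistently throughout (via the footnote's basis substitution $E_{i,i}\mapsto E_{i,i}-E_{n,n}$) is the other place where care is needed, though it only affects constants, not the rank-deficiency count.
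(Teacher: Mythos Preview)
Your reduction via Remarks~\ref{rem:contminmax} and~\ref{rem:h2blocks} is exactly the paper's strategy: both localize to the blocks $B_3(\mathcal{P},j)$ indexed by the rank-one elements $j$. The divergence is entirely in the per-block computation, and there your sketch has both bookkeeping errors and a genuine gap.

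\textbf{Bookkeeping.} Your target equality $\dim B_3(\mathcal{P},j)-\operatorname{Rank}B_3(\mathcal{P},j)=UD(\mathcal{P},j)$ is off by one: the correct per-block rank deficiency is $UD(\mathcal{P},j)-1$, and the paper then absorbs the $-1$ for each rank-one $j$ into $-|Ext(\mathcal{P})|$ to produce $-|\mathcal{P}|$ in the final formula. Consistent with this, the block is square of size $(D+U)\times(D+U)$ (see the displayed block in Lemma~\ref{lem:lbnn}), not $(D+U-1)\times(D+U-2)$, and the rank deficiency is $|U-D|-1$ when $D\ne U$ and $1$ when $D=U$.

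\textbf{The gap.} The paper does \emph{not} compute the block rank by exhibiting minors. It observes that $B_3(\mathcal{P},j)$ coincides with the $B_3$ block of the complete poset $\mathcal{P}^j\cong\mathcal{P}(D,1,U)$, and then invokes Theorem~\ref{thm:CPn1m}, whose proof (Appendix~B) is a two-sided squeeze: an explicit functional gives the upper bound on the index, while the lower bound uses (i) for $D=U$, a row-reduction showing that a $(2D-1)$-dimensional submatrix becomes genuinely skew-symmetric, hence singular, and (ii) for $D\ne U$, a graph-matching bound on the rank of skew-symmetric matrices. Your ``arrowhead plus explicit minor'' plan covers at best one inequality (rank is at least something); it does not supply the mechanism forcing the rank to be \emph{at most} the claimed value. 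In particular, the $D=U$ degeneracy is not a single accidental $2\times2$ vanishing but a structural odd-dimensional skew-symmetry hidden behind the row operations, and the $D\ne U$ upper bound is not visible from the zero pattern alone. If you want to avoid the paper's functional/matching route, you would need to replace both halves of that squeeze with direct linear-algebraic arguments, and your proposal does not yet do so.
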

\begin{proof}
By Remark~\ref{rem:contminmax}, the rows $\mathbf{E_{i,j}}$ for $i,j\in Ext(\mathcal{P})$ in $C'(\mathfrak{g}(\mathcal{P}))$ contribute $$|Rel_E(\mathcal{P})|-|Ext(\mathcal{P})|+1$$ to the index of $\mathfrak{g}_A(\mathcal{P})$. Thus, in order to compute $\ind(\mathfrak{g}_A(\mathcal{P}))$ we need to determine the rank of the $B_3$ block of $C'(\mathfrak{g}(\mathcal{P}))$. However, by Remark~\ref{rem:h2blocks}, this corresponds to determining the ranks of the sub-matrices $B_3(\mathcal{P},i)$, for each $i\in\mathcal{P}\backslash Ext(\mathcal{P})$. Define $\mathcal{P}^i$, for $i\in\mathcal{P}\backslash Ext(\mathcal{P})$, to be the induced subposet of $\mathcal{P}$ generated by 

$$\{j\in\mathcal{P}~|~i\preceq j\text{ or }j\preceq i\text{ or }j=i\}.$$ 

\noindent
Note that $\mathcal{P}^i$ must be of the form $\mathcal{P}(n_i,1,m_i)$, where $n_i=D(\mathcal{P},i)$ and $m_i=U(\mathcal{P},i)$. Furthermore, $B_3(\mathcal{P},i)=B_3(\mathcal{P}^i,i)$. So, we need only determine the contribution $B_3(\mathcal{P}^i,i)$ makes to the rank of $C'(\mathfrak{g}_A(\mathcal{P}(n_i,1,m_i)))$, and thus to the index of $\mathfrak{g}_A(\mathcal{P}(n_i,1,m_i))$. 

We make use of following important result, whose proof is relegated to Appendix B.

\begin{theorem}\label{thm:CPn1m}

$$\ind\mathfrak{g}(\mathcal{P}(n,1,m)) =  \begin{cases} 
      n^2-2n+2, & n=m; \\
      n(m-2), & m>n; \\
      m(n-2), & n>m.
   \end{cases}
$$
\end{theorem}

\noindent
Assuming Theorem~\ref{thm:CPn1m} and removing $$|Rel_E(\mathcal{P}(n_i,1,m_i))|-|Ext(\mathcal{P}(n_i,1,m_i))|+1$$ from $\ind(\mathfrak{g}_A(\mathcal{P}(n_i,1,m_i))$, we find that the contribution $B_3(\mathcal{P},i)$ makes to the index of $\mathfrak{g}_A(\mathcal{P})$ is given by the following formula:

$$ 
B_3(\mathcal{P},i) \textit{ index contribution}=
\begin{cases} 
      1, & n_i=m_i; \\
      m_i-n_i-1, & m_i>n_i; \\
      n_i-m_i-1, & n_i>m_i;
\end{cases}$$ 
that is, $B_3(\mathcal{P},i)$ contributes $UD(\mathcal{P},i)-1$ to the index of $\mathfrak{g}_A(\mathcal{P})$. Thus, $$\ind(\mathfrak{g}_A(\mathcal{P}))=Rel_E(\mathcal{P})-|Ext(\mathcal{P})|+1+\sum_{j\in\mathcal{P}\backslash Ext(\mathcal{P})}(UD(\mathcal{P},j)-1).$$ Now, for each element $i\in \mathcal{P}\backslash Ext(\mathcal{P})$, adding $-1$ to $-|Ext(\mathcal{P})|$ and $+1$ to the contribution from $B_3(\mathcal{P},i)$  establishes~(\ref{height2}).
\end{proof}

\begin{Ex}\label{ex:h2}
The height-two poset $\mathcal{P}$ in Example~\ref{Stargate} 
has $$Rel_E(\mathcal{P})=2,\quad |\mathcal{P}|=4,\quad \text{and}\quad UD(\mathcal{P},3)=1.$$ By Theorem~\ref{thm:h2pure}, the algebra $\mathfrak{g}_A(\mathcal{P})$ is Frobenius.
\end{Ex}

\noindent
The following theorem is an immediate corollary of Theorems~\ref{h1ind} and~\ref{thm:h2pure}, and will be needed in the characterization of height-two posets corresponding to Frobenius Lie poset algebras. See Theorem~\ref{thm:npurechar}.

\begin{theorem}\label{cor:minmax}
Let $\mathcal{P}$ and $\mathcal{Q}$ be posets of height two or less. If $\mathcal{R}$ denotes the poset obtained by identifying a minimal element of $\mathcal{P}$ with a minimal element of $\mathcal{Q}$, or identifying a maximal element of $\mathcal{P}$ and $\mathcal{Q}$, then 

$$\ind(\mathfrak{g}_A(\mathcal{R}))= \ind(\mathfrak{g}_A(\mathcal{P}))+\ind(\mathfrak{g}_A(\mathcal{Q})).
$$

\end{theorem}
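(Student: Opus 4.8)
The plan is to reduce both quantities to the explicit index formulas of Theorems~\ref{h1ind} and~\ref{thm:h2pure} and then verify the additivity by bookkeeping on the combinatorial parameters. First I would dispose of the degenerate pieces: a poset of height zero is a disjoint union of isolated points, so identifying a minimal (= maximal) point of $\mathcal{P}$ with one of $\mathcal{Q}$ simply glues two connected components at a point; in this case Theorem~\ref{h0ind} gives the additivity directly. So assume at least one of $\mathcal{P},\mathcal{Q}$ has positive height; since all relevant formulas are sums of local contributions indexed by elements, and gluing at an extremal point does not create or destroy any relations, one reduces to the connected case and then to checking the formula component by component.

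Next, I would set up the precise effect of the identification on each term appearing in~(\ref{height1}) and~(\ref{height2}). Write $\mathcal{R}$ for the glued poset and let $v$ be the identified point. The key observations are: (i) $|\mathcal{R}| = |\mathcal{P}| + |\mathcal{Q}| - 1$; (ii) $Rel(\mathcal{R}) = Rel(\mathcal{P}) \sqcup Rel(\mathcal{Q})$ and likewise $Rel_E(\mathcal{R}) = Rel_E(\mathcal{P}) + Rel_E(\mathcal{Q})$, because a relation $i \preceq j$ in $\mathcal{R}$ forces $i,j$ to lie in the same one of $\mathcal{P},\mathcal{Q}$ (identifying at a single extremal vertex never produces a new comparability — the glued vertex is still extremal in $\mathcal{R}$, so nothing passes ``through'' it); (iii) $Ext(\mathcal{R}) = Ext(\mathcal{P}) \cup Ext(\mathcal{Q})$ with the two copies of $v$ merged, so $|Ext(\mathcal{R})| = |Ext(\mathcal{P})| + |Ext(\mathcal{Q})| - 1$; and (iv) $\mathcal{P}\backslash Ext(\mathcal{P})$, $\mathcal{Q}\backslash Ext(\mathcal{Q})$, and $\mathcal{R}\backslash Ext(\mathcal{R})$ coincide as sets (the glued vertex $v$ is extremal in all three), and for each non-extremal $j$ the sets $\{i : i\preceq j\}$ and $\{i : j \preceq i\}$ are unchanged by the gluing, hence $UD(\mathcal{R},j) = UD(\mathcal{P},j)$ or $UD(\mathcal{Q},j)$ according to which poset $j$ came from.

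With these in hand the verification is a one-line substitution in each of the three cases (both height $\le 1$; one height $\le 1$ and one height two; both height two), using~(\ref{height1}) when a summand is height one and~(\ref{height2}) when it is height two, and noting that~(\ref{height2}) specializes to~(\ref{height1}) when the ``$\backslash Ext$'' sum is empty. For instance in the height-two case,
\[
\ind(\mathfrak{g}_A(\mathcal{R})) = Rel_E(\mathcal{R}) - |\mathcal{R}| + 1 + \sum_{j\in\mathcal{R}\backslash Ext(\mathcal{R})} UD(\mathcal{R},j),
\]
and substituting (i)--(iv) splits the right-hand side exactly into the $\mathcal{P}$-formula plus the $\mathcal{Q}$-formula, the ``$-1$'' from $|\mathcal{R}|$ and the ``$+1$'' being absorbed by the two copies of the ``$+1$'' constant term collapsing to one. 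I expect the only genuine subtlety — the step worth stating carefully rather than waving through — to be claim (ii): that identifying a \emph{maximal} element of $\mathcal{P}$ with a \emph{maximal} element of $\mathcal{Q}$ (or minimal with minimal) introduces no new order relations and keeps the glued vertex extremal. This is where the hypothesis that the identification is at extremal points of matching type is essential; gluing, say, a maximal element of one to a minimal element of the other would route relations through $v$ and destroy additivity. Once (ii) is nailed down, everything else is routine arithmetic with the closed-form formulas already established.
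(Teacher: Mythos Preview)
Your proposal is correct and follows exactly the route the paper intends: the paper states this theorem as ``an immediate corollary of Theorems~\ref{h1ind} and~\ref{thm:h2pure}'' with no further argument, and your bookkeeping of the terms $|Rel_E|$, $|\mathcal{P}|$, and the $UD$-contributions under gluing at a common extremal vertex is precisely the unwinding of that remark. Your observations (i)--(iv) are all sound, and you are right that the only point requiring any care is that gluing extremal elements of the same type creates no new comparabilities and keeps the glued vertex extremal.
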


\begin{remark}\label{rem:minmax}
The result of Corollary~\ref{cor:minmax} holds more generally. In particular, if $\mathcal{P}$ and $\mathcal{Q}$ are posets of arbitrary height, and $\mathcal{R}$ denotes the poset obtained by identifying a minimal element of $\mathcal{P}$ with a minimal element of $\mathcal{Q}$, or identifying a maximal element of $\mathcal{P}$ and $\mathcal{Q}$, then 

$$\ind(\mathfrak{g}_A(\mathcal{R}))= \ind(\mathfrak{g}_A(\mathcal{P}))+\ind(\mathfrak{g}_A(\mathcal{Q})).
$$

\end{remark}

To remove the connected restriction used throughout this section, we make use of the following theorem.

\begin{theorem}\label{thm:disjoint}
If $\{\mathcal{P}_i\}_{i=1}^n$ is a collection of connected posets on pairwise disjoint sets, then $$\ind\bigg(\mathfrak{g}_A\bigg(\sum_{i=1}^n\mathcal{P}_i\bigg)\bigg)=\bigg[\sum_{i=1}^n \ind(\mathfrak{g}_A(\mathcal{P}_i))\bigg]+n-1.$$
\end{theorem}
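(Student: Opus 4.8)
The plan is to reduce the disjoint-union case to the connected case treated throughout this section by analyzing how the commutator matrix decomposes. Suppose first that $n=2$, so $\mathcal{P} = \mathcal{P}_1 + \mathcal{P}_2$ with $\mathcal{P}_1,\mathcal{P}_2$ connected on disjoint sets. The key point is that $\mathfrak{g}(\mathcal{P}_1+\mathcal{P}_2)$ is \emph{not} the direct sum $\mathfrak{g}(\mathcal{P}_1)\oplus\mathfrak{g}(\mathcal{P}_2)$: the two algebras share the ambient Cartan (the identity-trace element $\sum_{i\in\mathcal{P}}E_{i,i}$), so one copy of the central diagonal generator is ``glued.'' Concretely, using the basis $\mathscr{B}$ from Section~\ref{sec:ifg}, a basis for $\mathfrak{g}(\mathcal{P}_1+\mathcal{P}_2)$ consists of $\{\sum_{i\in\mathcal{P}}E_{i,i}\}$ together with $\{E_{i,i} : 1<i\in\mathcal{P}\}$ and the off-diagonal $E_{i,j}$ with $i\preceq j$; since no relations exist between $\mathcal{P}_1$ and $\mathcal{P}_2$, every off-diagonal generator lives entirely in one block, and the only interaction between the blocks is through the single shared element $\sum_i E_{i,i}$ (and, after the normalization to $\mathfrak{sl}$, the trace relation among the diagonal generators).

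First I would make this precise by passing, as in the footnote following the definition of $\mathscr{B}$, to the $\mathfrak{sl}(n)$ picture: choose a fixed element $n\in\mathcal{P}_2$ and replace each diagonal generator $E_{j,j}$ by $E_{j,j}-E_{n,n}$, obtaining a basis for $\mathfrak{g}_A(\mathcal{P}_1+\mathcal{P}_2)$ of size $|\mathcal{P}|-1 = |\mathcal{P}_1| + |\mathcal{P}_2| - 1$. The commutator of any element of the $\mathcal{P}_1$-block with any element of the $\mathcal{P}_2$-block vanishes \emph{except} for commutators of a diagonal generator in one block against an off-diagonal generator in the other — but since $E_{j,j}-E_{n,n}$ with $j\in\mathcal{P}_1$ acts as zero on all of $\mathcal{P}_2$ once the $-E_{n,n}$ term is accounted for, and symmetrically, the only genuine cross term is the interaction of $E_{n,n}$ with the off-diagonal generators of $\mathcal{P}_1$. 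I would absorb this by choosing the ``deleted'' diagonal index to sit in a single block throughout; then $C(\mathfrak{g}_A(\mathcal{P}_1+\mathcal{P}_2))$ is block diagonal, with one block equal to $C(\mathfrak{g}_A(\mathcal{P}_1))$ augmented by one extra diagonal generator (contributing to rank but changing nothing about the rank of the off-diagonal coupling), and the other block equal to $C(\mathfrak{g}_A(\mathcal{P}_2))$. Equivalently, and most cleanly: $C(\mathfrak{g}(\mathcal{P}_1+\mathcal{P}_2))$ in the basis $\mathscr{B}$ is block diagonal with blocks $C(\mathfrak{g}(\mathcal{P}_1))$ and $C(\mathfrak{g}(\mathcal{P}_2))$ overlapping in exactly the single zero row/column indexed by $\sum_i E_{i,i}$. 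Taking ranks over $R(\mathfrak{g})$ gives $Rank\,C(\mathfrak{g}(\mathcal{P}_1+\mathcal{P}_2)) = Rank\,C(\mathfrak{g}(\mathcal{P}_1)) + Rank\,C(\mathfrak{g}(\mathcal{P}_2))$, while the size drops: $\dim\mathfrak{g}(\mathcal{P}_1+\mathcal{P}_2) = \dim\mathfrak{g}(\mathcal{P}_1)+\dim\mathfrak{g}(\mathcal{P}_2) - 1$. Applying Theorem~\ref{thm:commat} three times,
\[
\ind\mathfrak{g}(\mathcal{P}_1+\mathcal{P}_2) = \ind\mathfrak{g}(\mathcal{P}_1) + \ind\mathfrak{g}(\mathcal{P}_2) - 1,
\]
and the analogous bookkeeping for the $\mathfrak{sl}$ restriction (which discards exactly one $+1$ from the shared trace row) yields $\ind\mathfrak{g}_A(\mathcal{P}_1+\mathcal{P}_2) = \ind\mathfrak{g}_A(\mathcal{P}_1) + \ind\mathfrak{g}_A(\mathcal{P}_2) + 1$. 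Then I would induct on $n$: writing $\sum_{i=1}^n\mathcal{P}_i = \big(\sum_{i=1}^{n-1}\mathcal{P}_i\big) + \mathcal{P}_n$ and applying the two-component case plus the inductive hypothesis gives $\ind\mathfrak{g}_A(\sum_{i=1}^n\mathcal{P}_i) = \big[\sum_{i=1}^{n-1}\ind\mathfrak{g}_A(\mathcal{P}_i) + (n-2)\big] + \ind\mathfrak{g}_A(\mathcal{P}_n) + 1 = \big[\sum_{i=1}^n\ind\mathfrak{g}_A(\mathcal{P}_i)\big] + n - 1$, as claimed.

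The main obstacle is justifying carefully that the only coupling between the two blocks of the commutator matrix is the single shared trace generator — i.e., that after choosing the basis $\mathscr{B}$ one really does get a block-diagonal matrix with a one-dimensional overlap, rather than a spurious rank interaction coming from the choice of which diagonal element to delete when restricting to $\mathfrak{sl}(n)$. This is precisely the content of the footnote after the definition of $\mathscr{B}$, extended to the disconnected setting, and I would spell it out by noting that $[E_{i,i}, E_{k,l}] = (\delta_{ik}-\delta_{il})E_{k,l}$ depends only on whether $i\in\{k,l\}$, so a diagonal generator supported entirely on $\mathcal{P}_1$ commutes with every off-diagonal generator supported on $\mathcal{P}_2$; thus in the basis $\mathscr{B}$ (before any $\mathfrak{sl}$ normalization) the matrix is genuinely block diagonal away from the universal zero row/column, and the $\mathfrak{sl}$ count is handled uniformly by Remark~\ref{rem:contminmax}-style bookkeeping applied componentwise. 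Everything else is a routine rank/dimension count.
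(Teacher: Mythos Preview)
Your overall strategy --- block-diagonalize the commutator matrix and then bookkeep the passage from $\mathfrak{g}$ to $\mathfrak{g}_A$ --- is exactly the paper's, but your execution contains a genuine error that happens to be cancelled by a second, hand-waved step.

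The error is the claim that $\mathfrak{g}(\mathcal{P}_1+\mathcal{P}_2)$ is \emph{not} the direct sum $\mathfrak{g}(\mathcal{P}_1)\oplus\mathfrak{g}(\mathcal{P}_2)$, and the resulting assertion $\dim\mathfrak{g}(\mathcal{P}_1+\mathcal{P}_2)=\dim\mathfrak{g}(\mathcal{P}_1)+\dim\mathfrak{g}(\mathcal{P}_2)-1$. In fact the opposite is true: $\mathfrak{g}(\mathcal{P}_1+\mathcal{P}_2)$ \emph{is} the direct sum, since its natural basis $\{E_{i,j}:i\preceq j\}$ is the disjoint union of the bases for $\mathfrak{g}(\mathcal{P}_1)$ and $\mathfrak{g}(\mathcal{P}_2)$ and the two pieces commute. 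There is no shared generator: $\sum_{i\in\mathcal{P}_1}E_{i,i}$ and $\sum_{i\in\mathcal{P}_2}E_{i,i}$ are distinct elements, and neither equals $\sum_{i\in\mathcal{P}}E_{i,i}$. So $\ind\mathfrak{g}(\mathcal{P}_1+\mathcal{P}_2)=\ind\mathfrak{g}(\mathcal{P}_1)+\ind\mathfrak{g}(\mathcal{P}_2)$, with no $-1$. Your ``overlap at the zero row'' picture is likewise off: in the basis $\mathscr{B}$ for $\mathfrak{g}(\mathcal{P}_1+\mathcal{P}_2)$ (with $1\in\mathcal{P}_1$), the $\mathcal{P}_2$-block carries all of $\{E_{j,j}:j\in\mathcal{P}_2\}$ and does not touch the row $\sum_i E_{i,i}$ at all. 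Your subsequent line ``the analogous bookkeeping for the $\mathfrak{sl}$ restriction \dots\ yields $\ind\mathfrak{g}_A(\mathcal{P}_1+\mathcal{P}_2)=\ind\mathfrak{g}_A(\mathcal{P}_1)+\ind\mathfrak{g}_A(\mathcal{P}_2)+1$'' does not actually follow from your displayed $\mathfrak{g}$-level formula (which would give $+0$, not $+1$); you have silently corrected the error without justification.

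The paper avoids all of this by the one-line trick of replacing $\sum_i E_{i,i}$ by $E_{1,1}$ in $\mathscr{B}$, so that the basis is simply $\{E_{j,j}:j\in\mathcal{P}\}\cup\{E_{i,j}:i\prec j\}$. With this choice $C(\mathfrak{g}(\sum\mathcal{P}_i))$ is genuinely block diagonal with the $i$th block equal to $C(\mathfrak{g}(\mathcal{P}_i))$ (same rank in any basis), and the dimensions add with no correction. Hence $\ind\mathfrak{g}(\sum\mathcal{P}_i)=\sum\ind\mathfrak{g}(\mathcal{P}_i)$ directly for all $n$, and the passage $\ind\mathfrak{g}_A(\mathcal{Q})=\ind\mathfrak{g}(\mathcal{Q})-1$ applied once on the left and $n$ times on the right gives the $+\,(n-1)$. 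It is the \emph{trace-zero} algebra $\mathfrak{g}_A$ that fails to split as a direct sum (the trace condition is global), not $\mathfrak{g}$; once you locate the non-additivity there, no induction is needed.
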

\begin{proof}
For this proof we will use the basis $\mathscr{B}$ of $\mathfrak{g}(\mathcal{P})$ with $\sum\mathbf{E_{i,i}}$ replaced by $\mathbf{E_{1,1}}$. First, label the rows and columns of $C(\mathfrak{g}(\sum_{i=1}^n\mathcal{P}_i))$ by the basis elements corresponding to $\mathcal{P}_1$, followed by those of $\mathcal{P}_2$, etc. In this way, $C(\mathfrak{g}(\sum_{i=1}^n\mathcal{P}_i))$ becomes block diagonal with the $i$th block corresponding to $C(\mathfrak{g}(\mathcal{P}_i))$.
\end{proof}

The following corollary to Theorem~\ref{thm:disjoint} is immediate.

\begin{theorem} If $\mathfrak{g}_A(\mathcal{P})$ is Frobenius, then $\mathcal{P}$ is connected.
\end{theorem}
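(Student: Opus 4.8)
The statement to prove is the contrapositive-friendly claim: if $\mathfrak{g}_A(\mathcal{P})$ is Frobenius, then $\mathcal{P}$ is connected. The plan is to prove the contrapositive: if $\mathcal{P}$ is disconnected, then $\mathfrak{g}_A(\mathcal{P})$ is not Frobenius, i.e. $\ind(\mathfrak{g}_A(\mathcal{P})) \geq 1$.

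First I would write $\mathcal{P}$ as a disjoint union $\sum_{i=1}^n \mathcal{P}_i$ of its connected components, where $n \geq 2$ since $\mathcal{P}$ is disconnected. Then I would invoke Theorem~\ref{thm:disjoint} directly, which gives
$$\ind\left(\mathfrak{g}_A\left(\sum_{i=1}^n\mathcal{P}_i\right)\right)=\left[\sum_{i=1}^n \ind(\mathfrak{g}_A(\mathcal{P}_i))\right]+n-1.$$
Since the index of any Lie algebra is a nonnegative integer (being $\min_{F} \dim \ker B_F \geq 0$), each summand $\ind(\mathfrak{g}_A(\mathcal{P}_i)) \geq 0$, so the right-hand side is at least $n-1 \geq 1$. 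Hence $\ind(\mathfrak{g}_A(\mathcal{P})) \geq 1 > 0$, so $\mathfrak{g}_A(\mathcal{P})$ is not Frobenius. This establishes the contrapositive, and therefore the theorem.

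There is essentially no obstacle here: the entire content has been front-loaded into Theorem~\ref{thm:disjoint}, and the only additional ingredient is the trivial observation that the index is nonnegative. The one point worth a sentence of care is that $n \geq 2$ forces the additive constant $n-1$ to be strictly positive; a disconnected poset has at least two connected components by definition, so this is automatic. I would present the argument in two or three lines, citing Theorem~\ref{thm:disjoint} and noting nonnegativity of the index.
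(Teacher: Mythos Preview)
Your proof is correct and matches the paper's approach exactly: the paper states this theorem as an immediate corollary to Theorem~\ref{thm:disjoint}, and your argument---decompose into connected components, apply the disjoint-union index formula, and use nonnegativity of the index to force $\ind(\mathfrak{g}_A(\mathcal{P}))\ge n-1\ge 1$---is precisely the intended one-line deduction.
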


\begin{Ex}
Applying Theorem~\ref{thm:disjoint} to the posets in Example~\ref{ex:h1} and Example~\ref{ex:h2} gives that 

$$\ind(\mathfrak{g}_A(\mathcal{P}(1,2)+\mathcal{P}(1,1,2)))=1.$$
\end{Ex}

\noindent
Putting the results of this section together yields the following beautiful formula.

\begin{theorem}\label{thm:gind}
If $\mathcal{P}$ is a poset of height one or two, then 

\begin{eqnarray}\label{eqn:grind}
\ind(\mathfrak{g}_A(\mathcal{P}))=Rel_E(\mathcal{P})-|\mathcal{P}|+2\cdot C_{\mathcal{P}}-1+\sum_{j\in \mathcal{P}\backslash Ext(\mathcal{P})}UD(\mathcal{P},j).
\end{eqnarray} 
\end{theorem}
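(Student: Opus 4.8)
The plan is to combine the connected-case formulas (Theorems~\ref{h1ind} and~\ref{thm:h2pure}) with the disjoint-union formula (Theorem~\ref{thm:disjoint}). Write $\mathcal{P} = \sum_{i=1}^{C_{\mathcal{P}}} \mathcal{P}_i$ as the disjoint union of its connected components, so that $C_{\mathcal{P}}$ is exactly the number of pieces. Each $\mathcal{P}_i$ is a connected poset of height one or two (height zero is the trivial degenerate case, which I would either fold in directly or note separately since a height-zero component is a single point contributing $0$ to everything in sight). By Theorem~\ref{thm:disjoint},
\begin{equation*}
\ind(\mathfrak{g}_A(\mathcal{P})) = \left[\sum_{i=1}^{C_{\mathcal{P}}} \ind(\mathfrak{g}_A(\mathcal{P}_i))\right] + C_{\mathcal{P}} - 1.
\end{equation*}

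Next I would substitute the per-component formula. The key observation is that for a \emph{connected} poset $\mathcal{Q}$ of height one or two, Theorems~\ref{h1ind} and~\ref{thm:h2pure} can be written uniformly as
\begin{equation*}
\ind(\mathfrak{g}_A(\mathcal{Q})) = Rel_E(\mathcal{Q}) - |\mathcal{Q}| + 1 + \sum_{j \in \mathcal{Q}\setminus Ext(\mathcal{Q})} UD(\mathcal{Q},j),
\end{equation*}
since in the height-one case $\mathcal{Q}\setminus Ext(\mathcal{Q}) = \emptyset$, the sum is empty, and $Rel_E(\mathcal{Q}) = Rel(\mathcal{Q})$ because every element is extremal, recovering equation (\ref{height1}). (I should double-check the height-one identification $Rel_E = Rel$ and also confirm the convention that $|Rel_E|$ is being used interchangeably with $Rel_E$ as a number, matching the notation in the theorem statement.) Summing this over $i = 1,\dots,C_{\mathcal{P}}$ and using that the invariants are additive over disjoint components — namely $\sum_i |\mathcal{P}_i| = |\mathcal{P}|$, $\sum_i Rel_E(\mathcal{P}_i) = Rel_E(\mathcal{P})$ (no extremal relations are created or destroyed by taking disjoint unions, and extremality of an element is determined within its component), and $\bigcup_i (\mathcal{P}_i \setminus Ext(\mathcal{P}_i)) = \mathcal{P}\setminus Ext(\mathcal{P})$ with $UD(\mathcal{P}_i, j) = UD(\mathcal{P}, j)$ for $j$ in component $i$ — gives
\begin{equation*}
\sum_{i=1}^{C_{\mathcal{P}}} \ind(\mathfrak{g}_A(\mathcal{P}_i)) = Rel_E(\mathcal{P}) - |\mathcal{P}| + C_{\mathcal{P}} + \sum_{j \in \mathcal{P}\setminus Ext(\mathcal{P})} UD(\mathcal{P},j).
\end{equation*}

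Adding the $C_{\mathcal{P}} - 1$ correction term from Theorem~\ref{thm:disjoint} then yields $Rel_E(\mathcal{P}) - |\mathcal{P}| + 2C_{\mathcal{P}} - 1 + \sum_{j \in \mathcal{P}\setminus Ext(\mathcal{P})} UD(\mathcal{P},j)$, which is precisely equation (\ref{eqn:grind}). I do not expect a serious obstacle here — this is a bookkeeping assembly of results already proved. The one place to be careful is the unification of the height-one and height-two formulas into a single expression (verifying that the empty-sum and $Rel_E = Rel$ reductions are exactly right in the height-one case), and making sure the decomposition $\mathcal{P} = \sum \mathcal{P}_i$ into connected components genuinely satisfies the hypothesis "connected posets on pairwise disjoint sets" of Theorem~\ref{thm:disjoint}, which it does by definition of connected component. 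A cleaner alternative, if one wants to avoid even the mild case analysis, is to invoke Theorem~\ref{thm:h2pure} verbatim for each component (treating a height-one or height-zero component as a degenerate height-two poset for which the formula still holds), but I would only do that after checking the height-one formula is literally a special case of (\ref{height2}).
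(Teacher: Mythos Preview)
Your proposal is correct and is exactly the assembly the paper has in mind: the paper does not write out a proof of Theorem~\ref{thm:gind} at all, stating only that ``putting the results of this section together yields'' equation~(\ref{eqn:grind}), which is precisely the combination of Theorems~\ref{h1ind}, \ref{thm:h2pure}, and~\ref{thm:disjoint} you carry out. Your explicit verification that the height-one formula is the empty-sum case of the height-two formula, and your check that $Rel_E$, $|\cdot|$, $Ext$, and $UD$ split additively over components, fill in details the paper leaves to the reader.
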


\section{Combinatorial classification of Frobenius posets of restricted height - gluing rules}\label{sec:frobchar}

\textbf{N.B.}  To streamline the narrative, in this section and the next, we will often refer to posets corresponding to type-A Frobenius Lie poset algebras as \textit{Frobenius posets}. We will remove this convention after considering the other classical types in Section 6.  

\bigskip
In this section, we characterize Frobenius posets of height zero, one, and two.  The characterization of Frobenius posets of heights 
zero and one are straightforward and follow from Theorems~\ref{h0ind},~\ref{h1ind}, and~\ref{thm:disjoint}. 

\begin{theorem}\label{thm:h01FC}
If $\mathcal{P}$ is a poset of height zero or one,  then $\mathfrak{g}_A(\mathcal{P})$ is Frobenius if and only if the Hasse diagram of $\mathcal{P}$ is a tree.
\end{theorem}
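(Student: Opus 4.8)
The plan is to characterize when $\mathfrak{g}_A(\mathcal{P})$ is Frobenius (index zero) for posets of height zero or one, using the index formulas already established. The key observation is that $\mathfrak{g}_A(\mathcal{P})$ is Frobenius if and only if each connected component contributes appropriately; by Theorem~\ref{thm:disjoint}, if $\mathcal{P} = \sum_{i=1}^n \mathcal{P}_i$ with the $\mathcal{P}_i$ connected, then $\ind(\mathfrak{g}_A(\mathcal{P})) = \sum_{i=1}^n \ind(\mathfrak{g}_A(\mathcal{P}_i)) + n - 1 \geq n-1$, with equality forcing each $\ind(\mathfrak{g}_A(\mathcal{P}_i)) = 0$. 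Hence the Frobenius condition requires $\mathcal{P}$ connected (recovering the earlier corollary) and reduces the problem to the connected case.

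For the connected height-zero case, $\mathcal{P}$ is a single point (a connected poset of height zero is a single vertex, since there are no relations and connectivity of the Hasse diagram means one component), and Theorem~\ref{h0ind} gives $\ind(\mathfrak{g}_A(\mathcal{P})) = |\mathcal{P}| - 1 = 0$; a single point is trivially a tree, so the equivalence holds. For the connected height-one case, I would invoke Theorem~\ref{h1ind}: $\ind(\mathfrak{g}_A(\mathcal{P})) = |Rel(\mathcal{P})| - |\mathcal{P}| + 1$. Since $\mathcal{P}$ has height one, every strict relation is a covering relation, so $|Rel(\mathcal{P})|$ equals the number of edges in the Hasse diagram. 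A connected graph on $|\mathcal{P}|$ vertices has exactly $|\mathcal{P}| - 1$ edges if and only if it is a tree (and at least that many in general), so $\ind(\mathfrak{g}_A(\mathcal{P})) = 0$ precisely when the Hasse diagram is a tree. Then gluing back in the disjoint-union analysis: an arbitrary height-zero-or-one poset has Frobenius $\mathfrak{g}_A(\mathcal{P})$ iff it is connected and its Hasse diagram is a tree, i.e., iff its Hasse diagram is a tree (since a disconnected graph is not a tree).

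The main subtlety, rather than obstacle, is bookkeeping the equivalence "Hasse diagram is a tree" $\iff$ "connected and $|Rel(\mathcal{P})| = |\mathcal{P}| - 1$" and ensuring the height-one hypothesis is genuinely used so that strict relations coincide with Hasse edges (in higher height this fails, which is why the statement is restricted). I would also note explicitly that a tree is connected by definition, so the "only if" direction covering disconnected posets follows from the inequality $\ind \geq n - 1$ above being strict when $n \geq 2$. Everything then assembles directly from Theorems~\ref{h0ind}, \ref{h1ind}, and \ref{thm:disjoint} with no new computation required.
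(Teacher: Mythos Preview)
Your proposal is correct and takes essentially the same approach as the paper, which simply states that the result follows from Theorems~\ref{h0ind}, \ref{h1ind}, and~\ref{thm:disjoint}. You have filled in precisely the details the paper leaves implicit: the reduction to the connected case via Theorem~\ref{thm:disjoint}, and the observation that in height one every strict relation is a covering relation so that $|Rel(\mathcal{P})|$ equals the edge count of the Hasse diagram.
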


The characterization of Frobenius posets of height two is nontrivial. To start, we first characterize Frobenius, pure, height-two posets $\mathcal{P}$.

\begin{remark}\label{rem:constph2}
As mentioned in the proof of Theorem~\ref{thm:h2pure}, each rank-one element $i\in\mathcal{P}$ defines a poset $\mathcal{P}^i$ of the form $\mathcal{P}(n_i,1,m_i)$, where $n_i=D(\mathcal{P},i)$ and $m_i=U(\mathcal{P},i)$. Thus, any pure, height-two poset $\mathcal{P}$ can be constructed via a ``gluing" process, starting from the collection of posets $\{\mathcal{P}^i\}_{i\in\mathcal{P}}$ indexed by the rank-one elements of $\mathcal{P}$, and sequentially identifying minimal and maximal elements of each.  This gluing process is illustrated in Example~\ref{ex:gluing}. Moreover, if the poset is connected, this process can be performed in such a way that the resulting poset is connected at each step.
\end{remark}

\begin{lemma}\label{lem:onlystar}
If $\mathcal{P}$ is a Frobenius, pure, height-two poset, then for each rank-one element $i\in\mathcal{P}$, $\mathcal{P}^i$ is of the form $\mathcal{P}(2,1,1)$ or $\mathcal{P}(1,1,2)$.
\end{lemma}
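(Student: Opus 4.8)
The plan is to use the index formula of Theorem~\ref{thm:h2pure} together with the gluing description in Remark~\ref{rem:constph2} and the additivity result of Corollary~\ref{cor:minmax}, and argue by contradiction. Suppose $\mathcal{P}$ is a Frobenius, pure, height-two poset, so that $\ind(\mathfrak{g}_A(\mathcal{P}))=0$, yet some rank-one element $i_0\in\mathcal{P}$ has $\mathcal{P}^{i_0}=\mathcal{P}(n,1,m)$ with the pair $(n,m)$ \emph{not} equal to $(2,1)$ or $(1,2)$. The first step is to record what Theorem~\ref{thm:h2pure} says about each local piece: each rank-one element $j$ contributes $UD(\mathcal{P},j) = |n_j - m_j|$ (or $2$ when $n_j=m_j$) to the sum, and one checks that $UD(\mathcal{P},j)\ge 1$ always, with $UD(\mathcal{P},j)=1$ exactly when $\{n_j,m_j\}=\{k,k+1\}$ for some $k\ge 1$. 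So the term $\sum_{j\in\mathcal{P}\backslash Ext(\mathcal{P})} UD(\mathcal{P},j)$ is at least $|\mathcal{P}\backslash Ext(\mathcal{P})|$, the number of rank-one elements.

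Next I would bound $Rel_E(\mathcal{P}) - |\mathcal{P}| + 1$ from below. Writing $M$ for the set of minimal (rank-zero) elements, $R$ for the rank-one elements, and $T$ for the maximal (rank-two) elements, we have $|\mathcal{P}| = |M| + |R| + |T|$ and $Rel_E(\mathcal{P})$ counts pairs $(a,c)$ with $a\in M$, $c\in T$, $a\prec c$ — equivalently, the number of maximal chains through the various $\mathcal{P}^j$, i.e.\ $Rel_E(\mathcal{P}) = \sum_{j\in R}(\text{number of min--max pairs realized through }j)$, which since each $\mathcal{P}^j=\mathcal{P}(n_j,1,m_j)$ is complete equals $\sum_{j\in R} n_j m_j$ (before accounting for pairs realized through more than one $j$, which only increases the count further relative to a spanning-tree estimate). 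The cleanest route is not to expand $Rel_E$ directly but instead to invoke Corollary~\ref{cor:minmax} iteratively: by Remark~\ref{rem:constph2}, $\mathcal{P}$ is built from the stars $\{\mathcal{P}^j\}_{j\in R}$ by successively identifying minimal elements with minimal elements and maximal elements with maximal elements. Corollary~\ref{cor:minmax} gives that under each such identification the index adds; hence
\[
\ind(\mathfrak{g}_A(\mathcal{P})) \;=\; \sum_{j\in R} \ind\big(\mathfrak{g}_A(\mathcal{P}(n_j,1,m_j))\big).
\]
Now invoke Theorem~\ref{thm:CPn1m}: $\ind\mathfrak{g}(\mathcal{P}(n,1,m))$ — after passing to the $\mathfrak{sl}$ version and using Remark~\ref{rem:contminmax} to strip off the $Rel_E - |Ext| + 1$ contribution, exactly as in the proof of Theorem~\ref{thm:h2pure} — has local index contribution $UD(\mathcal{P}(n,1,m)) - 1 + (\text{the } Rel_E - |Ext| + 1 \text{ piece})$, and one checks term by term that $\ind(\mathfrak{g}_A(\mathcal{P}(n,1,m)))\ge 0$ with equality if and only if $(n,m)\in\{(2,1),(1,2)\}$ (the cases $\mathcal{P}(2,1,1)$ and $\mathcal{P}(1,1,2)$, each of which was shown Frobenius in Example~\ref{ex:h2}).

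Granting that last pointwise inequality, the contradiction is immediate: if $\mathcal{P}$ is Frobenius then $0 = \sum_{j\in R}\ind(\mathfrak{g}_A(\mathcal{P}(n_j,1,m_j)))$, a sum of nonnegative terms, so every term vanishes, forcing every $\mathcal{P}^j$ to be $\mathcal{P}(2,1,1)$ or $\mathcal{P}(1,1,2)$. The one genuinely substantive step — the main obstacle — is establishing the pointwise claim that $\ind(\mathfrak{g}_A(\mathcal{P}(n,1,m))) > 0$ whenever $(n,m)\notin\{(2,1),(1,2)\}$; this is a direct case analysis on the three branches of Theorem~\ref{thm:CPn1m} ($n=m$ gives $n^2-2n+2 - (nm - |Ext| + 1) \ge 1$; $m>n$ gives contribution $m-n-1$, zero only at $m=n+1$ and then the $Rel_E$ part forces $n=1$; symmetrically for $n>m$), together with checking the boundary cases $n=1$ or $m=1$ separately since there $D$ or $U$ degenerates. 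Verifying that Corollary~\ref{cor:minmax} applies at each gluing step — in particular that the intermediate posets stay of height $\le 2$, which Remark~\ref{rem:constph2} guarantees — is routine bookkeeping.
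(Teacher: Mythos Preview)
There is a genuine gap: the additivity
\[
\ind(\mathfrak{g}_A(\mathcal{P})) \;=\; \sum_{j\in R} \ind\big(\mathfrak{g}_A(\mathcal{P}(n_j,1,m_j))\big)
\]
does not follow from Corollary~\ref{cor:minmax} and is in fact false. That corollary applies only when two posets are glued at a \emph{single} extremal point. In the construction of Remark~\ref{rem:constph2}, attaching a new star $\mathcal{P}^j$ to the already-built poset typically requires identifying several minimal and maximal elements simultaneously; after the first identification the two pieces are no longer disjoint posets, and the further identifications are not covered by the corollary. A concrete counterexample: $\mathcal{P}(2,2,2)$ is assembled from two copies of $\mathcal{P}(2,1,2)$, each of index $2$, yet Theorem~\ref{thm:h2pure} gives
\[
\ind(\mathfrak{g}_A(\mathcal{P}(2,2,2))) \;=\; 4 - 6 + 1 + 2 + 2 \;=\; 3 \;\neq\; 2+2.
\]
So even the one-sided inequality $\ind(\mathfrak{g}_A(\mathcal{P}))\ge \sum_j \ind(\mathfrak{g}_A(\mathcal{P}^j))$ fails, and the step you flagged as ``routine bookkeeping'' is exactly where the argument breaks.

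The paper repairs this by proving a weaker monotonicity directly from the formula of Theorem~\ref{thm:h2pure}: adjoining \emph{any} $\mathcal{P}(n,1,m)$ to a connected pure height-two poset $\mathcal{P}'$ by identifying extremal elements yields $\ind(\mathfrak{g}_A(\mathcal{P}))\ge\ind(\mathfrak{g}_A(\mathcal{P}'))$ (one counts the new relations in $Rel_E$ contributed by a fixed minimal and a fixed maximal element of the attached star and compares with the number of new extremal elements). One then starts the build from the offending star $\mathcal{P}^{i_0}$, which has positive index by the case check you outlined, and attaches the remaining stars; monotonicity keeps the index positive throughout, contradicting Frobeniusness. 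Your pointwise verification that $\ind(\mathfrak{g}_A(\mathcal{P}(n,1,m)))=0$ only for $(n,m)\in\{(1,2),(2,1)\}$ is correct and is also the launching point of the paper's argument.
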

\begin{proof}
It follows from Theorem~\ref{thm:CPn1m} that
\[\ind\mathfrak{g}(\mathcal{P}(n,1,m)) =  \begin{cases} 
      n^2-2n+2, & n=m; \\
      n(m-2), & m>n; \\
      m(n-2), & n>m.
   \end{cases}
\]
Thus, if $\mathcal{P}$ is a Frobenius, pure, height-two poset with a single rank-one element, then $\mathcal{P}$ is of the form $\mathcal{P}(2,1,1)$ or $\mathcal{P}(1,1,2)$.

Consider a pure, height-two poset $\mathcal{Q}$ with $r_1>1$ rank-one elements, which we label $i_1,\hdots,i_{r_1}$.  We want to show that if $\mathcal{Q}$ is Frobenius, then $\mathcal{P}^{i_j}$ is of the form $\mathcal{P}(1,1,2)$ or $\mathcal{P}(2,1,1)$ for $1\le j\le r_1$.  Assume for a contradiction that there exists $i_k$, for $1\le k\le r_1$, satisfying $\mathcal{P}^{i_k}$ is of the form $\mathcal{P}(n_k,1,m_k)$, where $(n_k,m_k)\neq (1,2),(2,1)$ as ordered pairs. By Remark~\ref{rem:constph2}, one can construct $\mathcal{Q}$ starting from $\mathcal{P}^{i_k}$ and inductively adjoining the posets -- by identifying minimal or maximal elements -- $\mathcal{P}^{i_j}$, which are of the form $\mathcal{P}(n_j,1,m_j)$, for $1\le j\neq k\le n$ . Furthermore, this construction can be performed in such a way that at each stage the resulting poset is connected. Thus, it suffices to show that adjoining $\mathcal{S}$ of the form $\mathcal{P}(n,1,m)$ to a pure, connected, height-two poset $\mathcal{P}'$ by identifying minimal or maximal elements cannot result in a poset with smaller index. Denote by $\mathcal{P}$ the poset obtained by combining $\mathcal{P}'$ and $\mathcal{S}$. Without loss of generality, assume that we have identified at least one maximal element of $\mathcal{P}'$ with a maximal element of $\mathcal{S}$ and let $p_1$ be the common label. Let $p_2$ be an arbitrary minimal element of $\mathcal{S}$. Assume $\mathcal{S}$ contains $k$ maximal and $l$ minimal elements which are not identified with elements of $\mathcal{P}'$ in $\mathcal{P}$. Using Theorem~\ref{thm:h2pure}, $$\ind(\mathfrak{g}_A(\mathcal{P}))=\ind(\mathfrak{g}_A(\mathcal{P}'))+|Rel_E(\mathcal{P})\backslash Rel_E(\mathcal{P}')|-(k+l+1)+1+UD(\mathcal{S},s)$$ $$=\ind(\mathfrak{g}_A(\mathcal{P}'))+|Rel_E(\mathcal{P})\backslash Rel_E(\mathcal{P}')|-k-l+UD(\mathcal{S},s),$$ where $s$ is the rank-one element of $\mathcal{S}$. As $UD(\mathcal{S},s)\ge 0$, we need only show that $$|Rel_E(\mathcal{P})\backslash Rel_E(\mathcal{P}')|\ge k+l.$$ To this end, $p_1$ is related to all $l$ new minimal elements and $p_2$ is related to all $k$ new maximal elements. Note, there is no overlap in these two collections of relations in $Ext(\mathcal{P})$ since we assumed $p_1\in \mathcal{P}'$. Therefore, $|Rel_E(\mathcal{P})\backslash Rel_E(\mathcal{P}')|\ge k+l$ which implies that $\ind(\mathfrak{g}_A(\mathcal{P}))\ge \ind(\mathfrak{g}_A(\mathcal{P}'))$ and the result follows.
\end{proof}

We now determine how to identify minimal and maximal elements of posets of the form $\mathcal{P}(2,1,1)$ and $\mathcal{P}(1,1,2)$ so that the resulting poset is Frobenius. Let $\mathcal{Q}$ be a pure, height-two poset and $\mathcal{S}$ a poset of the form $\mathcal{P}(1,1,2)$ or $\mathcal{P}(2,1,1)$. Let $\mathcal{S}$ have elements $a_1,a_2,b,$ and $c$ with either $a_1,a_2\preceq b\preceq c$ or $c\preceq b\preceq a_1,a_2$. Furthermore, let $x,y,z\in Ext(\mathcal{Q})$. To fix notation, assume that if $c, a_1$, or $a_2$ is identified with an element of $Ext(\mathcal{Q})$, then that element must be $x,y$, or $z$, respectively. The following Table~\ref{tab:h2fassem} lists all possible ways (``gluing rules") of identifying the elements $a,b,c\in\mathcal{S}$ with the elements $x,y,z\in\mathcal{Q}$. The last column of Table~\ref{tab:h2fassem} records the attendant contributions to the index; that is, if $\mathcal{P}$ is the poset resulting from gluing $\mathcal{S}$ to $\mathcal{Q}$, then this column gives $\ind(\mathfrak{g}_A(\mathcal{P}))-\ind(\mathfrak{g}_A(\mathcal{Q}))$.

\begin{lemma}\label{lem:table}
The table below summarizes the contribution to the index of a pure height-two poset upon gluing a copy of $\mathcal{P}(1,1,2)$ or $\mathcal{P}(2,1,1)$ as described above. Convention: Let $\sim$ denote that two elements are related;  that is, $x\sim y$ means $x\preceq y$ or $y\preceq x$.

\begin{table}[H]
\centering
\begin{tabular}{c|c|c|c|c}
Gluing Rule & $c$                       & $a_1$                         & $a_2$                         & Contribution to the Index \\ \hline
$A_1$ & $c\neq x$ & $a_1=y$         & $a_2\neq z$ & 0                         \\
$A_2$ & $c\neq x$ & $a_1\neq y$             & $a_2=z$ & 0                         \\
$B$ & $c\neq x$ & $a_1=y$       & $a_2=z$                 & 1                         \\
$C$ & $c=x$ & $a_1\neq y$                 & $a_2\neq z$                 & 0                         \\
$D_1$ & $c=x$                 & $a_1=y$, $y\sim x$ & $a_2\neq z$  & 0                         \\
$D_2$ & $c=x$                 & $a_1\neq y$  & $a_2= z$, $z\sim x$  & 0                         \\
$E_1$ & $c=x$                 & $a_1= y$, $y\nsim x$  & $a_2\neq z$  & 1                         \\
$E_2$ & $c=x$                 & $a_1\neq y$  & $a_2= z$, $z\nsim x$  & 1                         \\
$F$ & $c=x$                 & $a_1= y$, $y\sim x$  & $a_2= z$, $z\sim x$  & 0                         \\
$G_1$ & $c=x$                 & $a_1= y$, $y\sim x$  & $a_2= z$, $z\nsim x$  & 1                         \\
$G_2$ & $c=x$                 & $a_1= y$, $y\nsim x$  & $a_2= z$, $z\sim x$  & 1                         \\
$H$ & $c=x$                 & $a_1= y$, $y\nsim x$  & $a_2= z$, $z\nsim x$  & 2                         \\
\end{tabular}
\caption{Height-two gluing rules}\label{tab:h2fassem}
\end{table}

\end{lemma}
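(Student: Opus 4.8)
The plan is to prove Lemma~\ref{lem:table} by a direct case analysis, using Theorem~\ref{thm:h2pure} as the sole computational engine. The key observation is that gluing $\mathcal{S}$ (a copy of $\mathcal{P}(1,1,2)$ or $\mathcal{P}(2,1,1)$) onto $\mathcal{Q}$ along some subset of $Ext(\mathcal{Q})$ produces a poset $\mathcal{P}$ whose index, by Theorem~\ref{thm:h2pure}, is
$$
\ind(\mathfrak{g}_A(\mathcal{P}))=Rel_E(\mathcal{P})-|\mathcal{P}|+1+\sum_{j\in\mathcal{P}\backslash Ext(\mathcal{P})}UD(\mathcal{P},j),
$$
so that the contribution to the index, $\ind(\mathfrak{g}_A(\mathcal{P}))-\ind(\mathfrak{g}_A(\mathcal{Q}))$, decomposes as a sum of three increments: the change $\Delta Rel_E$ in the number of extremal relations; the change $-\Delta|\mathcal{P}|$ in cardinality; and the change $\Delta(\sum UD)$ in the sum of the $UD$ values over non-extremal elements. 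First I would record, once and for all, that the rank-one element $s$ of $\mathcal{S}$ becomes (the unique, or one of the) non-extremal element(s) of $\mathcal{P}$ arising from $\mathcal{S}$, and that because $\mathcal{S}$ is $\mathcal{P}(1,1,2)$ or $\mathcal{P}(2,1,1)$ we have $\{D(\mathcal{P},s),U(\mathcal{P},s)\}=\{1,2\}$ always (the down-set and up-set of $s$ are unaffected by the gluing, since only extremal elements of $\mathcal{S}$ are identified, and $s$ is not extremal in $\mathcal{P}$), hence $UD(\mathcal{P},s)=1$ in every case. Thus $\Delta(\sum UD)=1$ contributed by $s$, plus possibly a change in $UD(\mathcal{Q},\cdot)$ for those non-extremal elements of $\mathcal{Q}$ whose up/down counts are altered by the identification — but identifying only extremal elements of $\mathcal{Q}$ with elements of $\mathcal{S}$ does not change $D(\mathcal{Q},j)$ or $U(\mathcal{Q},j)$ for any non-extremal $j$ of $\mathcal{Q}$, so that part of the sum is unchanged.

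Next I would compute $\Delta|\mathcal{P}|$ and $\Delta Rel_E$ for each of the twelve gluing rules. Writing $\mathcal{S}$ with elements $a_1,a_2,b,c$, the number of new vertices added is $4$ minus the number of elements of $\mathcal{S}$ identified with elements of $\mathcal{Q}$ (note $b=s$ is never identified). So $\Delta|\mathcal{P}|$ is $4$ minus the count of identifications among $\{c,a_1,a_2\}$. For $\Delta Rel_E$ one must carefully count, for each rule, how many strict relations between extremal elements of $\mathcal{P}$ are new — i.e. not already present in $Rel_E(\mathcal{Q})$ and not internal to $\mathcal{S}$ in a way already counted. The relations internal to $\mathcal{S}$ among its three extremal elements $\{c,a_1,a_2\}$ number $2$ (namely $a_1\sim c$ and $a_2\sim c$; $a_1$ and $a_2$ are incomparable). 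When an element is identified, a relation $a_i\sim c$ of $\mathcal{S}$ may coincide with an already-present relation of $\mathcal{Q}$ (this is precisely what the side conditions ``$y\sim x$'' versus ``$y\nsim x$'' track): if $c=x$ and $a_1=y$ with $x\sim y$ in $\mathcal{Q}$, then the relation $a_1\sim c$ is not new; if $x\nsim y$, it is new. Doing this bookkeeping case by case, one checks in each row that
$$
\text{Contribution}=\Delta Rel_E-\Delta|\mathcal{P}|+1+\Delta\Big(\sum_{j\notin Ext}UD\Big)
$$
equals the tabulated value; e.g. in rule $C$ ($c=x$, $a_1\neq y$, $a_2\neq z$) three new vertices $a_1,a_2,b$ are added, the two relations $a_1\sim c$, $a_2\sim c$ are new, and $\sum UD$ increases by $1$, giving $2-3+1+1=1$... wait, I would recheck the precise convention here: the ``$+1$'' in Theorem~\ref{thm:h2pure}'s formula is absorbed when one passes to the \emph{difference} of indices, so the increment is actually $\Delta Rel_E-\Delta|\mathcal{P}|+\Delta(\sum UD)$, and for rule $C$ this is $2-3+1=0$, matching the table; I would present the general difference formula cleanly at the outset to avoid this pitfall.

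To organize the twelve cases I would group them: rules $A_1,A_2,B$ have $c\neq x$ (so $c$ contributes a new minimal/maximal vertex and a ``fresh'' attachment of $\mathcal{S}$ to $\mathcal{Q}$ through one or two of the $a_i$); rules $C$ through $H$ have $c=x$; within the latter, subdivide by how many of $a_1,a_2$ are identified, and, when identified, by the side conditions on $\sim$. The main obstacle — really the only place care is needed — is the relation-counting for $\Delta Rel_E$ when multiple elements of $\mathcal{S}$ are simultaneously identified with comparable elements of $\mathcal{Q}$ (rules $D_i,F,G_i,H$): one must be sure not to double-count a relation that lies in $Rel_E(\mathcal{Q})$ \emph{and} corresponds to an edge of $\mathcal{S}$, and the hypothesis ``$p_1\in\mathcal{P}'$'' / the labeling convention ``$c,a_1,a_2$ map only to $x,y,z$ respectively'' is exactly what prevents pathological overlaps (e.g. it forbids $a_1$ and $a_2$ from both being glued to the \emph{same} element of $\mathcal{Q}$, which would otherwise collapse $\mathcal{S}$ nontrivially). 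Once the difference formula is stated and this overlap issue is settled, each of the twelve entries is a one-line arithmetic check, and I would display these either inline in the proof or as an annotated version of Table~\ref{tab:h2fassem}.
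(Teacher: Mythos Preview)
Your approach is correct and is exactly what the paper does: its proof of Lemma~\ref{lem:table} is the single sentence ``Follows directly using Theorem~\ref{thm:h2pure},'' and you have supplied the bookkeeping behind that sentence, namely computing $\Delta Rel_E - \Delta|\mathcal{P}| + \Delta(\sum UD)$ case by case with $\Delta(\sum UD)=1$ throughout. Just be sure to present the difference formula cleanly from the start (the ``$+1$'' indeed cancels), and your observation that neither the up/down counts of old rank-one elements of $\mathcal{Q}$ nor those of $b$ are affected by the extremal-only identifications is the key structural point making each row a one-line arithmetic check.
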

\begin{proof}
Follows directly using Theorem~\ref{thm:h2pure}.
\end{proof}

\begin{Ex}\label{ex:gluing}
In Figure~\ref{fig:indinv}, we illustrate the Table 1 gluing rules which do not alter the index.
\begin{figure}[H]
$$\begin{tikzpicture}[scale=0.6]
\node [circle, draw = black, fill = black, inner sep = 0.5mm] (v1) at (-2.5,0.5) {};
\node [circle, draw = black, fill = black, inner sep = 0.5mm] (v2) at (-2.5,1.5) {};
\node [circle, draw = black, fill = black, inner sep = 0.5mm] (v4) at (-3,2.5) {};
\node [circle, draw = black, fill = black, inner sep = 0.5mm] (v3) at (-2,2.5) {};
\draw (v1) -- (v2) -- (v3);
\draw (v2) -- (v4);
\draw[->] (-1.5,1.5) -- (-0.5,1.5);
\node [circle, draw = black, fill = black, inner sep = 0.5mm] (v7) at (0,2.5) {};
\node [circle, draw = black, fill = black, inner sep = 0.5mm] (v8) at (1,2.5) {};
\node [circle, draw = black, fill = black, inner sep = 0.5mm] (v11) at (2,2.5) {};
\node [circle, draw = black, fill = black, inner sep = 0.5mm] (v10) at (1.5,1.5) {};
\node [circle, draw = black, fill = black, inner sep = 0.5mm] (v6) at (0.5,1.5) {};
\node [circle, draw = black, fill = black, inner sep = 0.5mm] (v5) at (0.5,0.5) {};
\node [circle, draw = black, fill = black, inner sep = 0.5mm] (v9) at (1.5,0.5) {};
\node at (-1,2) {$A_1$};
\draw (v5) -- (v6) -- (v7);
\draw (v6) -- (v8);
\draw (v9) -- (v10) -- (v8);
\draw (v10) -- (v11);
\draw [->] (2.5,1.5) -- (3.5,1.5);
\node at (3,2) {$C$};
\node [circle, draw = black, fill = black, inner sep = 0.5mm] (v14) at (4,2.5) {};
\node [circle, draw = black, fill = black, inner sep = 0.5mm] (v15) at (5,2.5) {};
\node [circle, draw = black, fill = black, inner sep = 0.5mm] (v18) at (6,2.5) {};
\node [circle, draw = black, fill = black, inner sep = 0.5mm] (v20) at (7,2.5) {};
\node [circle, draw = black, fill = black, inner sep = 0.5mm] (v21) at (8,2.5) {};
\node [circle, draw = black, fill = black, inner sep = 0.5mm] (v13) at (4.5,1.5) {};
\node [circle, draw = black, fill = black, inner sep = 0.5mm] (v17) at (5.5,1.5) {};
\node [circle, draw = black, fill = black, inner sep = 0.5mm] (v19) at (6.5,1.5) {};
\node [circle, draw = black, fill = black, inner sep = 0.5mm] (v12) at (4.5,0.5) {};
\node [circle, draw = black, fill = black, inner sep = 0.5mm] (v16) at (5.5,0.5) {};
\draw (v12) -- (v13) -- (v14);
\draw (v13) -- (v15);
\draw (v16) -- (v17) -- (v15);
\draw (v17) -- (v18);
\draw (v16) -- (v19) -- (v20);
\draw (v19) -- (v21);
\draw[->] (8.5,1.5) -- (9.5,1.5);
\node at (9,2) {$D_1$};
\node  (v24) at (10,2.5) [circle, draw = black, fill = black, inner sep = 0.5mm] {};
\node  (v25) at (11,2.5) [circle, draw = black, fill = black, inner sep = 0.5mm] {};
\node (v28) at (12,2.5) [circle, draw = black, fill = black, inner sep = 0.5mm] {};
\node (v31) at (13,2.5) [circle, draw = black, fill = black, inner sep = 0.5mm] {};
\node (v32) at (14,2.5) [circle, draw = black, fill = black, inner sep = 0.5mm] {};
\node  (v23) at (10.5,1.5) [circle, draw = black, fill = black, inner sep = 0.5mm] {};
\node  (v27) at (11.5,1.5) [circle, draw = black, fill = black, inner sep = 0.5mm] {};
\node  (v29) at (13.5,1.5) [circle, draw = black, fill = black, inner sep = 0.5mm] {};
\node  (v22) at (10.5,0.5) [circle, draw = black, fill = black, inner sep = 0.5mm] {};
\node  (v26) at (11.5,0.5) [circle, draw = black, fill = black, inner sep = 0.5mm] {};
\node  (v30) at (12.5,1.5) [circle, draw = black, fill = black, inner sep = 0.5mm] {};
\draw (v22) -- (v23) -- (v24);
\draw (v23) -- (v25);
\draw (v26) -- (v27) -- (v25);
\draw (v27) -- (v28);
\draw (v26) -- (v29) -- (v28);
\draw (v26) -- (v30);
\draw (v31) -- (v30) -- (v32);
\node [circle, draw = black, fill = black, inner sep = 0.5mm] (v33) at (15,2.5) {};
\draw (v29) -- (v33);
\draw [->] (15.5,1.5) -- (16.5,1.5);
\node at (16,2) {$F$};
\node [circle, draw = black, fill = black, inner sep = 0.5mm] (v46) at (17,1.5) {};
\node [circle, draw = black, fill = black, inner sep = 0.5mm] (v36) at (17.5,2.5) {};
\node [circle, draw = black, fill = black, inner sep = 0.5mm] (v37) at (18.5,2.5) {};
\node [circle, draw = black, fill = black, inner sep = 0.5mm] (v40) at (19.5,2.5) {};
\node [circle, draw = black, fill = black, inner sep = 0.5mm] (v42) at (20.5,2.5) {};
\node [circle, draw = black, fill = black, inner sep = 0.5mm] (v43) at (21.5,2.5) {};
\node [circle, draw = black, fill = black, inner sep = 0.5mm] (v45) at (22.5,2.5) {};
\node [circle, draw = black, fill = black, inner sep = 0.5mm] (v35) at (18,1.5) {};
\node [circle, draw = black, fill = black, inner sep = 0.5mm] (v39) at (19,1.5) {};
\node [circle, draw = black, fill = black, inner sep = 0.5mm] (v41) at (20,1.5) {};
\node [circle, draw = black, fill = black, inner sep = 0.5mm] (v44) at (21,1.5) {};
\node [circle, draw = black, fill = black, inner sep = 0.5mm] (v34) at (18,0.5) {};
\node [circle, draw = black, fill = black, inner sep = 0.5mm] (v38) at (19,0.5) {};
\draw (v34) -- (v35) -- (v36);
\draw (v35) -- (v37);
\draw (v38) -- (v39) -- (v37);
\draw (v39) -- (v40);
\draw (v38) -- (v41) -- (v42);
\draw (v41) -- (v43);
\draw (v38) -- (v44) -- (v40);
\draw (v44) -- (v45);
\draw (v34) -- (v46) -- (v36);
\draw (v46) -- (v37);
\node at (1.5,0) {$c$};
\node at (1,3) {$a_1$};
\node at (2,3) {$a_2$};
\node at (5.5,0) {$c$};
\node at (7,3) {$a_1$};
\node at (8,3) {$a_2$};
\node at (11.5,0) {$c$};
\node at (12,3) {$a_1$};
\node at (15,3) {$a_2$};
\node at (18,0) {$c$};
\node at (17.5,3) {$a_1$};
\node at (18.5,3) {$a_2$};
\end{tikzpicture}$$
\caption{Index non-altering gluing rules}\label{fig:indinv}
\end{figure}
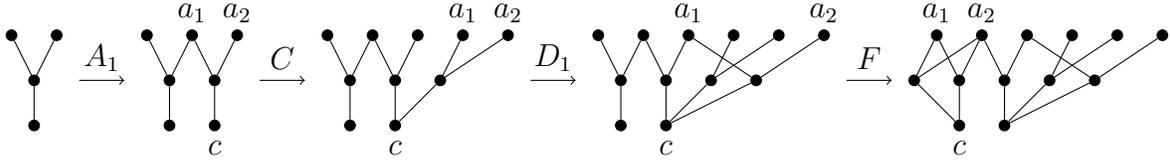
\end{Ex}

\begin{Ex}
In Figure~\ref{fig:indalt}, we illustrate the Table 1 gluing rules which do alter the index. These correspond to gluing rules $B$, $E_1$ \textup($E_2$ is similar\textup), $G_1$ \textup($G_2$ is similar\textup), and $H$, respectively. 
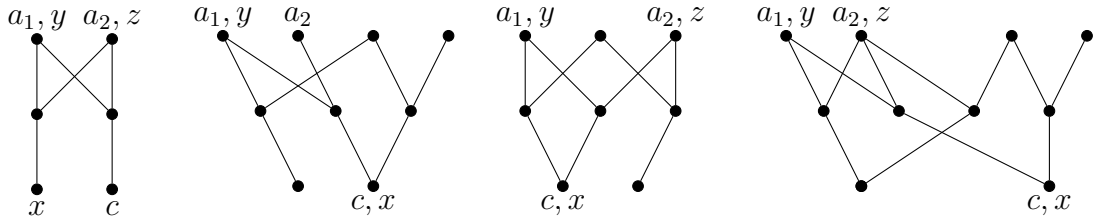
\begin{figure}[H]
$$\begin{tikzpicture}
\node (v6) at (-2.5,2) [circle, draw = black, fill = black, inner sep = 0.5mm] {};
\node (v3) at (-1.5,2) [circle, draw = black, fill = black, inner sep = 0.5mm] {};
\node (v2) at (-2.5,1) [circle, draw = black, fill = black, inner sep = 0.5mm] {};
\node (v5) at (-1.5,1) [circle, draw = black, fill = black, inner sep = 0.5mm] {};
\node (v1) at (-2.5,0) [circle, draw = black, fill = black, inner sep = 0.5mm] {};
\node (v4) at (-1.5,0) [circle, draw = black, fill = black, inner sep = 0.5mm] {};
\draw (v1) -- (v2) -- (v3);
\draw (v4) -- (v5) -- (v6);
\draw (v6) -- (v2);
\draw (v5) -- (v3);
\node at (-2.5,2.25) {$a_1,y$};
\node at (-1.5,2.25) {$a_2,z$};
\node at (-2.5,-0.25) {$x$};
\node at (-1.5,-0.25) {$c$};
\end{tikzpicture}\quad\begin{tikzpicture}
\node (v3) at (-2.5,2) [circle, draw = black, fill = black, inner sep = 0.5mm] {};
\node (v8) at (-1.5,2) [circle, draw = black, fill = black, inner sep = 0.5mm] {};
\node (v4) at (-0.5,2) [circle, draw = black, fill = black, inner sep = 0.5mm] {};
\node (v9) at (0.5,2) [circle, draw = black, fill = black, inner sep = 0.5mm] {};
\node (v7) at (0,1) [circle, draw = black, fill = black, inner sep = 0.5mm] {};
\node (v6) at (-1,1) [circle, draw = black, fill = black, inner sep = 0.5mm] {};
\node (v2) at (-2,1) [circle, draw = black, fill = black, inner sep = 0.5mm] {};
\node (v1) at (-1.5,0) [circle, draw = black, fill = black, inner sep = 0.5mm] {};
\node (v5) at (-0.5,0) [circle, draw = black, fill = black, inner sep = 0.5mm] {};
\draw (v1) -- (v2) -- (v3);
\draw (v2) -- (v4);
\draw (v5) -- (v6) -- (v3);
\draw (v5) -- (v7) -- (v4);
\draw (v6) -- (v8);
\draw (v7) -- (v9);
\node at (-2.5,2.25) {$a_1,y$};
\node at (-0.5,-0.25) {$c,x$};
\node at (-1.5,2.25) {$a_2$};
\end{tikzpicture}\quad\begin{tikzpicture}
\node (v3) at (-0.5,2) [circle, draw = black, fill = black, inner sep = 0.5mm] {};
\node (v5) at (0.5,2) [circle, draw = black, fill = black, inner sep = 0.5mm] {};
\node (v7) at (1.5,2) [circle, draw = black, fill = black, inner sep = 0.5mm] {};
\node (v8) at (1.5,1) [circle, draw = black, fill = black, inner sep = 0.5mm] {};
\node (v4) at (0.5,1) [circle, draw = black, fill = black, inner sep = 0.5mm] {};
\node (v2) at (-0.5,1) [circle, draw = black, fill = black, inner sep = 0.5mm] {};
\node (v1) at (0,0) [circle, draw = black, fill = black, inner sep = 0.5mm] {};
\node (v6) at (1,0) [circle, draw = black, fill = black, inner sep = 0.5mm] {};
\draw (v1) -- (v2) -- (v3);
\draw (v1) -- (v4) -- (v3);
\draw (v6) --  (v8) -- (v7);
\draw (v8) -- (v5);
\draw (v2) -- (v5);
\draw (0.5,1) -- (v7);
\node at (0,-0.25) {$c,x$};
\node at (-0.5,2.25) {$a_1,y$};
\node at (1.5,2.25) {$a_2,z$};
\end{tikzpicture}\quad\begin{tikzpicture}
\node (v10) at (-3.5,2) [circle, draw = black, fill = black, inner sep = 0.5mm]  {};
\node (v7) at (-2.5,2) [circle, draw = black, fill = black, inner sep = 0.5mm]  {};
\node (v4) at (-0.5,2) [circle, draw = black, fill = black, inner sep = 0.5mm]  {};
\node (v3) at (0.5,2) [circle, draw = black, fill = black, inner sep = 0.5mm]  {};
\node (v2) at (0,1) [circle, draw = black, fill = black, inner sep = 0.5mm]  {};
\node (v6) at (-1,1) [circle, draw = black, fill = black, inner sep = 0.5mm]  {};
\node (v9) at (-3,1) [circle, draw = black, fill = black, inner sep = 0.5mm]  {};
\node [circle, draw = black, fill = black, inner sep = 0.5mm] (v8) at (-2.5,0) {};
\node [circle, draw = black, fill = black, inner sep = 0.5mm] (v5) at (-2.5,0) {};
\node (v1) at (0,0) [circle, draw = black, fill = black, inner sep = 0.5mm]  {};
\draw (v1) -- (v2) -- (v3);
\draw (v2) -- (v4);
\draw (v5) -- (v6) -- (v4);
\draw (v6) -- (v7);
\draw (v8) -- (v9) -- (v7);
\draw (v9) -- (v10);
\node (v11) at (-2,1) [circle, draw = black, fill = black, inner sep = 0.5mm]  {};
\draw (v1) -- (v11) -- (v7);
\draw (v11) -- (v10);
\node at (0,-0.25) {$c,x$};
\node at (-3.5,2.25) {$a_1,y$};
\node at (-2.5,2.25) {$a_2,z$};
\end{tikzpicture}$$
\caption{Index altering gluing rules}\label{fig:indalt}
\end{figure}
\end{Ex}

The following theorem is a result of Lemma~\ref{lem:onlystar} and Lemma~\ref{lem:table}.

\begin{theorem}\label{thm:h2frobchar}
Any Frobenius, pure, height-two poset is contructed from copies of $\mathcal{P}(2,1,1)$ or $\mathcal{P}(1,1,2)$ by applying gluing rules $A_1, A_2, C, D_1, D_2$, or $F$ of Table~\ref{tab:h2fassem}.
\end{theorem}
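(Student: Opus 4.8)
The plan is to realise $\mathcal{P}$ as an iterated gluing of copies of $\mathcal{P}(2,1,1)$ and $\mathcal{P}(1,1,2)$ and then track the index through the gluing, exploiting that an index is a nonnegative integer, so that a telescoping sum of nonnegative ``contributions'' can only vanish when each contribution vanishes. The three inputs are Remark~\ref{rem:constph2} (the decomposition into the posets $\mathcal{P}^i$), Lemma~\ref{lem:onlystar} (each $\mathcal{P}^i$ is $\mathcal{P}(2,1,1)$ or $\mathcal{P}(1,1,2)$), and Lemma~\ref{lem:table} (the index bookkeeping for each gluing rule).

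First I would observe that $\mathcal{P}$ is connected: decomposing a disconnected $\mathcal{P}$ into its $C_{\mathcal{P}}\ge 2$ components and applying Theorem~\ref{thm:disjoint} gives $\ind(\mathfrak{g}_A(\mathcal{P}))\ge C_{\mathcal{P}}-1\ge 1$, contradicting the Frobenius hypothesis. Since $\mathcal{P}$ is now connected, pure, and of height two, Remark~\ref{rem:constph2} supplies an enumeration $i_1,\dots,i_{r_1}$ of the rank-one elements of $\mathcal{P}$ together with posets $\mathcal{Q}_1,\dots,\mathcal{Q}_{r_1}$ with $\mathcal{Q}_1=\mathcal{P}^{i_1}$, $\mathcal{Q}_{r_1}=\mathcal{P}$, and $\mathcal{Q}_t$ obtained from $\mathcal{Q}_{t-1}$ by identifying some extremal elements of $\mathcal{Q}_{t-1}$ with those of $\mathcal{P}^{i_t}$, each $\mathcal{Q}_t$ being connected, pure, and of height two. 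By Lemma~\ref{lem:onlystar}, every $\mathcal{P}^{i_t}$ is a copy of $\mathcal{P}(2,1,1)$ or $\mathcal{P}(1,1,2)$; in particular so is the base $\mathcal{Q}_1$, which is therefore Frobenius (Theorem~\ref{thm:h2pure}, as in Example~\ref{ex:h2}).

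Next I would match each step with a line of Table~\ref{tab:h2fassem}. The passage from $\mathcal{Q}_{t-1}$ to $\mathcal{Q}_t$ is exactly the situation set up before Lemma~\ref{lem:table}: a copy $\mathcal{S}=\mathcal{P}^{i_t}$ of $\mathcal{P}(2,1,1)$ or $\mathcal{P}(1,1,2)$ is glued onto the pure, connected, height-two poset $\mathcal{Q}=\mathcal{Q}_{t-1}$; connectedness of $\mathcal{Q}_t$ forces at least one of the extremal elements $c,a_1,a_2$ of $\mathcal{S}$ to be identified with an extremal element of $\mathcal{Q}$, and the pure, height-two structure of $\mathcal{Q}_t$ forces those identifications to be minimal-with-minimal and maximal-with-maximal. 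The data determining the gluing — which of $c,a_1,a_2$ are identified, and, when $c$ is, whether its image is comparable in $\mathcal{Q}$ to the images of $a_1$ and of $a_2$ — range over precisely the twelve rules $A_1,A_2,B,C,D_1,D_2,E_1,E_2,F,G_1,G_2,H$. By Lemma~\ref{lem:table}, the step changes the index by the entry $c_t:=\ind(\mathfrak{g}_A(\mathcal{Q}_t))-\ind(\mathfrak{g}_A(\mathcal{Q}_{t-1}))$ in the last column of the table, so $c_t\ge 0$ always, with $c_t=0$ exactly when the rule used is one of $A_1,A_2,C,D_1,D_2,F$.

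Finally, telescoping the contributions gives
\[
\ind(\mathfrak{g}_A(\mathcal{P}))=\ind(\mathfrak{g}_A(\mathcal{Q}_1))+\sum_{t=2}^{r_1}c_t=\sum_{t=2}^{r_1}c_t ,
\]
since $\mathcal{Q}_1$ is Frobenius. Because $\mathfrak{g}_A(\mathcal{P})$ is Frobenius this sum is $0$, and each summand is a nonnegative integer, so $c_t=0$ for every $t$; hence every gluing used is one of $A_1,A_2,C,D_1,D_2,F$, which is the assertion. The step I expect to cause the most trouble is the middle one: one must check carefully that Table~\ref{tab:h2fassem} really is an exhaustive catalogue of the admissible gluings of $\mathcal{P}(2,1,1)$ or $\mathcal{P}(1,1,2)$ onto a pure, connected, height-two poset — that no two of $c,a_1,a_2$ can be forced onto a single vertex of $\mathcal{Q}$, that the comparability conditions listed are the only ones affecting the count in Theorem~\ref{thm:h2pure}, and that the reconstruction of Remark~\ref{rem:constph2} can genuinely be arranged as a sequence of such single-copy gluings keeping each intermediate poset pure, connected, and of height two. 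These verifications are finite and mechanical, but they are where the substance lies.
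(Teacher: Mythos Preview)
Your proposal is correct and follows essentially the same approach as the paper: the paper's proof is the single line ``a result of Lemma~\ref{lem:onlystar} and Lemma~\ref{lem:table},'' and what you have written is precisely the natural unpacking of that line via Remark~\ref{rem:constph2}, with the telescoping argument making explicit why only the zero-contribution rules can appear. Your closing caveat about verifying the exhaustiveness of Table~\ref{tab:h2fassem} is well taken but is exactly what the paper tacitly assumes in asserting that Lemma~\ref{lem:table} covers all cases.
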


It remains to characterize Frobenius, non-pure, height-two posets. Note that a height-two poset $\mathcal{P}$ is non-pure if and only if there exists covering relations between minimal and maximal elements of $\mathcal{P}$. Removing such covering relations, leaves a disjoint union of 
singleton posets and Frobenius, pure, height-two posets. Let $\mathscr{P}=\{\mathcal{P}_1,\hdots,\mathcal{P}_n\}$ be the resulting collection of pure, height-two posets.  Elements of $\mathscr{P}$ 
will be called \textit{pure components} of $\mathcal{P}$.  Since $\mathcal{P}$ is connected, each pure component of $\mathcal{P}$ is connected to every other pure component in the Hasse diagram of $\mathcal{P}$.  In particular, the pure components of $\mathcal{P}$ are connected by paths which alternate between paths consisting of covering relations between elements of $Ext(\mathcal{P})$ and paths contained in pure components of $\mathcal{P}$. To characterize Frobenius, non-pure, height-two posets, we will outline an inductive procedure for constructing $\mathcal{P}$ from its pure components and covering relations between elements of $Ext(\mathcal{P})$. Such a construction breaks into two stages. First, we will construct a subposet $\mathcal{P}'$ starting from any pure component, say $\mathcal{P}_1$ of $\mathcal{P}$, which is, in a sense, a minimally connected subposet of $\mathcal{P}$ containing all elements and pure components of $\mathcal{P}$. Let $\mathcal{Q}_i$ denote the subposet of $\mathcal{P}'$ formed at stage $i$ so that $\mathcal{Q}_1=\mathcal{P}_1$. Given $\mathcal{Q}_{i-1}$, the poset $\mathcal{Q}_i$ is formed as follows:
\begin{enumerate}
    \item add all covering relations of $\mathcal{P}$ between pairs of elements consisting of a minimal (or maximal) element of $\mathcal{Q}_{i-1}$ and a unique maximal (or minimal) element of $\mathcal{P}\backslash\mathcal{Q}_{i-1}$;
    \item next, form $\mathcal{Q}_i$ by adding all pure components of $\mathcal{P}$ sharing a single minimal or maximal element with the poset formed in step 1 above. Here, the ``addition" is accomplished by identifying the given shared elements of $\mathcal{P}$.
\end{enumerate}
Since $\mathcal{P}$ is finite, there must exist $m$ for which $\mathcal{Q}_m$ cannot be extended to another subposet of $\mathcal{P}$ by applying rules 1 or 2 above. Set $\mathcal{P}'=\mathcal{Q}_m$.

\begin{Ex}\label{ex:npurefrob}
A non-pure, height-two poset $\mathcal{P}$ along with the construction of a choice of $\mathcal{P}'$, as outlined above, is illustrated in Figure~\ref{fig:Frobnonpure}.
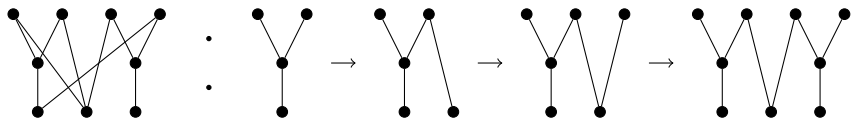
\begin{figure}[H]
$$\begin{tikzpicture}[scale=0.65]
\node [circle, draw = black, fill = black, inner sep = 0.5mm] (v1) at (-2,-0.5) {};
\node (v5) at (-1,-0.5) [circle, draw = black, fill = black, inner sep = 0.5mm] {};
\node (v3) at (-1,0.5) [circle, draw = black, fill = black, inner sep = 0.5mm] {};
\node (v2) at (-1.5,1.5) [circle, draw = black, fill = black, inner sep = 0.5mm] {};
\node [circle, draw = black, fill = black, inner sep = 0.5mm] (v4) at (-0.5,1.5) {};
\draw (v1) -- (v2) -- (v3) -- (v4);
\draw (v5) -- (v3);
\node (v6) at (-2.5,1.5) [circle, draw = black, fill = black, inner sep = 0.5mm] {};
\draw (v6) -- (v1);
\node (v7) at (-3,-0.5) [circle, draw = black, fill = black, inner sep = 0.5mm] {};
\node (v8) at (-3,0.5) [circle, draw = black, fill = black, inner sep = 0.5mm] {};
\node (v9) at (-3.5,1.5) [circle, draw = black, fill = black, inner sep = 0.5mm] {};
\draw (v7) -- (v8) -- (v9);
\draw (v8) -- (v6);
\draw (v1) -- (v9);
\draw (v7) -- (v4);

\node [circle, draw = black, fill = black, inner sep = 0.2mm] at (0.5,1) {};
\node [circle, draw = black, fill = black, inner sep = 0.2mm] at (0.5,0) {};
\node [circle, draw = black, fill = black, inner sep = 0.5mm] (v30) at (1.5,1.5) {};
\node [circle, draw = black, fill = black, inner sep = 0.5mm] (v31) at (2.5,1.5) {};
\node [circle, draw = black, fill = black, inner sep = 0.5mm] (v29) at (2,0.5) {};
\node [circle, draw = black, fill = black, inner sep = 0.5mm] (v28) at (2,-0.5) {};
\draw (v28) -- (v29) -- (v30);
\draw (v29) -- (v31);
\draw[->] (3,0.5) -- (3.5,0.5);
\node [circle, draw = black, fill = black, inner sep = 0.5mm] (v34) at (7,1.5) {};
\node [circle, draw = black, fill = black, inner sep = 0.5mm] (v35) at (8,1.5) {};
\node [circle, draw = black, fill = black, inner sep = 0.5mm] (v33) at (7.5,0.5) {};
\node [circle, draw = black, fill = black, inner sep = 0.5mm] (v32) at (7.5,-0.5) {};
\node [circle, draw = black, fill = black, inner sep = 0.5mm] (v36) at (8.5,-0.5) {};
\node [circle, draw = black, fill = black, inner sep = 0.5mm] (v37) at (9,1.5) {};
\draw (v32) -- (v33) -- (v34) -- cycle;
\draw (v33) -- (v35) -- (v36) -- (v37);
\draw[->] (9.5,0.5) -- (10,0.5);
\node [circle, draw = black, fill = black, inner sep = 0.5mm] (v45) at (10.5,1.5) {};
\node [circle, draw = black, fill = black, inner sep = 0.5mm] (v40) at (11.5,1.5) {};
\node [circle, draw = black, fill = black, inner sep = 0.5mm] (v39) at (11,0.5) {};
\node [circle, draw = black, fill = black, inner sep = 0.5mm] (v38) at (11,-0.5) {};
\node [circle, draw = black, fill = black, inner sep = 0.5mm] (v41) at (12,-0.5) {};
\node [circle, draw = black, fill = black, inner sep = 0.5mm] (v42) at (12.5,1.5) {};
\node [circle, draw = black, fill = black, inner sep = 0.5mm] (v46) at (13.5,1.5) {};
\node [circle, draw = black, fill = black, inner sep = 0.5mm] (v43) at (13,0.5) {};
\node [circle, draw = black, fill = black, inner sep = 0.5mm] (v44) at (13,-0.5) {};
\draw (v38) -- (v39) -- (v40) -- (v41) -- (v42) -- (v43) -- (v44);
\draw (v39) -- (v45);
\draw (v43) -- (v46);
\node (v14) at (4,1.5)  [circle, draw = black, fill = black, inner sep = 0.5mm] {};
\node (v12) at (5,1.5)  [circle, draw = black, fill = black, inner sep = 0.5mm] {};
\node (v11) at (4.5,0.5)  [circle, draw = black, fill = black, inner sep = 0.5mm] {};
\node (v10) at (4.5,-0.5)  [circle, draw = black, fill = black, inner sep = 0.5mm] {};
\node (v13) at (5.5,-0.5)  [circle, draw = black, fill = black, inner sep = 0.5mm] {};
\draw (v10) -- (v11) -- (v12) -- (v13);
\draw (v11) -- (v14);
\draw[->] (6,0.5) -- (6.5,0.5);
\end{tikzpicture}$$
\caption{Non-pure height two poset}\label{fig:Frobnonpure}
\end{figure}
\end{Ex}

\noindent
By construction, $\mathcal{P}'$ contains all elements of $\mathcal{P}$ and removing any covering relation between elements of $Ext(\mathcal{P}')=Ext(\mathcal{P})$  results in a disconnected poset. Furthermore, applying Corollary~\ref{cor:minmax} at each stage of the construction of $\mathcal{P}'$, the index of $\mathfrak{g}_A(\mathcal{P}')$ is equal to $\sum_{i=1}^n\ind(\mathfrak{g}_A(\mathcal{P}_i))$. 

Now, one can continue to form $\mathcal{P}$ from $\mathcal{P}'$ by adding covering relations between elements of $Ext(\mathcal{P})$. By Theorem~\ref{thm:h2pure}, the addition of each such covering relation increases the index by one. Thus, if a non-pure, height-two poset is Frobenius, then it must have the form of $\mathcal{P}'$.  We have established the following result. 

\begin{theorem}\label{thm:npurechar}
A height-two poset $\mathcal{P}$ is Frobenius if and only if it satisfies the following four conditions:

\begin{enumerate}[label=\textup(\roman*\textup)]
    \item the pure components of $\mathcal{P}$ are Frobenius, pure, height-two posets;
    \item there are no covering relations between maximal and minimal elements of a pure component of $\mathcal{P}$;
    \item each minimal element of $\mathcal{P}$ is covered by at most one maximal element of a given pure component of $\mathcal{P}$, and each maximal element covers at most one minimal element of a given pure component of $\mathcal{P}$;
    \item if the pure components of $\mathcal{P}$ are contracted to a point in the Hasse diagram of $\mathcal{P}$, then the result is a simple graph containing no cycles; that is, a tree.
\end{enumerate}
\end{theorem}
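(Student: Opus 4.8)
The plan is to establish both implications by reducing to the pure, height-two case (already resolved in Theorem~\ref{thm:h2frobchar}) and then tracking the index as $\mathcal{P}$ is reassembled from its pure components. Recall that deleting from $\mathcal{P}$ every covering relation between a minimal and a maximal element of $\mathcal{P}$ leaves a disjoint union of singletons and pure, height-two posets $\mathcal{P}_1,\dots,\mathcal{P}_n$, the pure components of $\mathcal{P}$, where singletons are automatically Frobenius by Theorem~\ref{h0ind}. Recall also that the inductive procedure described just before the statement produces a connected subposet $\mathcal{P}'\subseteq\mathcal{P}$ which contains every element of $\mathcal{P}$ and is built from the $\mathcal{P}_i$ together with two-element chains $\mathcal{P}(1,1)$ — the retained minimal–maximal covering relations — by successively identifying one minimal element with a minimal element, or one maximal element with a maximal element. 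Throughout I will apply Corollary~\ref{cor:minmax}, which gives that such an identification of height-$\le 2$ posets adds their indices.

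For the ``if'' direction, assume (i)--(iv). Conditions~(ii), (iii), and (iv) say exactly that $\mathcal{P}$ is itself a poset of the form $\mathcal{P}'$: no pure component carries an internal minimal–maximal covering relation (ii), each retained minimal–maximal covering relation attaches the pure components it joins at distinct extremal vertices (iii), and the global incidence pattern of pure components and of these bridging relations is acyclic (iv); hence $\mathcal{P}$ can be grown by the inductive procedure with no further covering relation left to adjoin. Thus $\mathcal{P}$ is obtained from $\mathcal{P}_1,\dots,\mathcal{P}_n$ and finitely many copies of $\mathcal{P}(1,1)$ by a sequence of identifications of the type in Corollary~\ref{cor:minmax} — each bridging chain being glued in two steps, at its minimal end and at its maximal end, the tree condition ensuring that no step requires a simultaneous double identification and that every intermediate poset stays of height at most two. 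Since $\mathfrak{g}_A(\mathcal{P}(1,1))$ is Frobenius, iterating Corollary~\ref{cor:minmax} gives $\ind(\mathfrak{g}_A(\mathcal{P}))=\sum_{i=1}^{n}\ind(\mathfrak{g}_A(\mathcal{P}_i))$, which is $0$ by~(i).

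For the ``only if'' direction, assume $\mathfrak{g}_A(\mathcal{P})$ is Frobenius; then $\mathcal{P}$ is connected, and we may build a $\mathcal{P}'$ as above. Applying Corollary~\ref{cor:minmax} at each stage of the construction yields $\ind(\mathfrak{g}_A(\mathcal{P}'))=\sum_{i=1}^{n}\ind(\mathfrak{g}_A(\mathcal{P}_i))\ge 0$. Since $\mathcal{P}'$ contains all elements of $\mathcal{P}$, the poset $\mathcal{P}$ is recovered from $\mathcal{P}'$ by adjoining the remaining covering relations, each joining two elements of $Ext(\mathcal{P})$. Adjoining such a relation keeps the poset connected of height two while increasing $Rel_E$ by one and leaving $|\mathcal{P}|$, $Ext(\mathcal{P})$, and the terms $UD(\mathcal{P},j)$ for $j\in\mathcal{P}\backslash Ext(\mathcal{P})$ unchanged, so by the index formula~(\ref{height2}) of Theorem~\ref{thm:h2pure} it raises the index by exactly $1$. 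Therefore
$$\ind(\mathfrak{g}_A(\mathcal{P}))=\sum_{i=1}^{n}\ind(\mathfrak{g}_A(\mathcal{P}_i))+\#\{\text{covering relations adjoined to }\mathcal{P}'\}.$$
Both summands are nonnegative, so the left side is $0$ if and only if every $\mathcal{P}_i$ is Frobenius — condition~(i) — and no covering relation is adjoined, i.e. $\mathcal{P}=\mathcal{P}'$; and $\mathcal{P}=\mathcal{P}'$ is precisely the conjunction of~(ii), (iii), and (iv). This proves the characterization.

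The step I expect to require the most care is the combinatorial dictionary between ``$\mathcal{P}$ has the form of some $\mathcal{P}'$'' and the concrete conditions~(ii)--(iv), together with the verification that the inductive construction behaves as advertised: one must confirm that the procedure terminates with a subposet containing all of $\mathcal{P}$, that at every gluing a minimal element is matched to a minimal element (or a maximal to a maximal) and never a minimal to a maximal — so that Corollary~\ref{cor:minmax} is applicable — that the intermediate posets stay of height at most two, and that condition~(iii) is exactly what prevents gluing a pure component along two extremal vertices at once, a double identification that would create a cycle, violate~(iv), and destroy the additivity of the index. The remaining points, such as the $+1$ effect of adjoining an $Ext$-to-$Ext$ covering relation, are immediate from Theorem~\ref{thm:h2pure} and formula~(\ref{height2}).
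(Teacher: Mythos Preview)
Your proposal is correct and follows essentially the same approach as the paper: build the minimally connected subposet $\mathcal{P}'$ via Corollary~\ref{cor:minmax}, observe that $\ind(\mathfrak{g}_A(\mathcal{P}'))=\sum_i\ind(\mathfrak{g}_A(\mathcal{P}_i))$, and then use Theorem~\ref{thm:h2pure} to see that each additional $Ext$-to-$Ext$ covering relation adds exactly $1$ to the index. You are more explicit than the paper in two places---the ``if'' direction (the paper states only the ``only if'' direction and leaves the converse implicit) and the dictionary translating $\mathcal{P}=\mathcal{P}'$ into conditions~(ii)--(iv)---but the underlying argument is the same.
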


\begin{Ex}
The leftmost poset of Figure~\ref{ex:npurefrob} is an example of a Frobenius, non-pure, height-two poset.
\end{Ex}

\section{Rigidity}\label{sec:rigid}

In this section, we prove the rigidity result noted in the Introduction (see Theorem \ref{thm:main2}). The proof depends on the following result of Coll and Gerstenhaber, which itself is a corollary to their more general theorem regarding Lie semi-direct products for which type-A Lie poset algebras are the prime example.  To set the notation, let $\mathfrak{g}_A(\mathcal{P})$ be as above, $\mathfrak{h}$ be the standard Cartan subalgebra of $\mathfrak{g}$ with linear dual $\mathfrak{h}^*$, $\mathfrak{c}$ be the center of $\mathfrak{g}_A(\mathcal{P})$, and the $H^i$'s designate cohomology classes of Chevalley-Eilenberg or simplicial type, depending on whether the first argument is a Lie algebra or a simplicial complex, respectively.

\begin{theorem}[Coll and Gerstenhaber \textbf{\cite{CG}}, 2017] \label{CG}
$$H^2(\mathfrak{g}_A(\mathcal{P}),\mathfrak{g}_A(\mathcal{P}))=(\bigwedge^{~~~2}\mathfrak{h}^{*}\bigotimes\mathfrak{c})\quad\bigoplus\quad(\mathfrak{h}^{*}\bigotimes H^1(\Sigma(\mathcal{P}),\mathbf{k}))\quad\bigoplus\quad H^2(\Sigma(\mathcal{P}),\mathbf{k})$$
\end{theorem}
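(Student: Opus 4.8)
The plan is to combine the semidirect-product decomposition $\mathfrak{g}_A(\mathcal{P}) = \mathfrak{h} \ltimes \mathfrak{n}$ — where $\mathfrak{h}$ is the standard Cartan subalgebra and $\mathfrak{n}$ is the nilpotent ideal spanned by the $E_{i,j}$ with $i \prec j$ — with the general Coll--Gerstenhaber theorem on semidirect products $\mathfrak{t} \ltimes \mathfrak{n}$ having torus part $\mathfrak{t}$. The heart of that general theorem is that, since $\mathfrak{h}$ acts semisimply on the adjoint module, the subcomplexes of nonzero $\mathfrak{h}$-weight inside the Chevalley--Eilenberg complex $C^{\bullet}(\mathfrak{g}_A(\mathcal{P}), \mathfrak{g}_A(\mathcal{P}))$ are acyclic — a contracting homotopy on the weight-$\lambda$ part is furnished by the Cartan formula $\theta_{h} = d\iota_{h} + \iota_{h}d$ with $h \in \mathfrak{h}$ chosen so that $\lambda(h) \neq 0$ — so the cohomology is computed by the weight-zero subcomplex, on which the differential decouples as $\mathrm{id}_{\bigwedge^{\bullet}\mathfrak{h}^{*}} \otimes d_{\mathfrak{n}}$. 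This gives
\[
H^{2}\big(\mathfrak{g}_A(\mathcal{P}), \mathfrak{g}_A(\mathcal{P})\big) \;=\; \bigoplus_{p+q=2} \Big(\bigwedge\nolimits^{p}\mathfrak{h}^{*}\Big) \otimes H^{q}\big(\mathfrak{n}, \mathfrak{g}_A(\mathcal{P})\big)^{\mathfrak{h}},
\]
and the $q=0$ term is $\bigwedge^{2}\mathfrak{h}^{*} \otimes \big(\mathfrak{g}_A(\mathcal{P})^{\mathfrak{n}}\big)^{\mathfrak{h}}$. Since a central element of $\mathfrak{g}_A(\mathcal{P})$ has trivial $\mathfrak{h}$-weight and hence is diagonal, one checks $\big(\mathfrak{g}_A(\mathcal{P})^{\mathfrak{n}}\big)^{\mathfrak{h}} = \mathfrak{c}$, which supplies the summand $\bigwedge^{2}\mathfrak{h}^{*} \otimes \mathfrak{c}$.

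It remains to identify $H^{q}\big(\mathfrak{n}, \mathfrak{g}_A(\mathcal{P})\big)^{\mathfrak{h}} \cong H^{q}\big(\Sigma(\mathcal{P}), \mathbf{k}\big)$ for $q = 1, 2$; this is the Lie-theoretic counterpart of the Gerstenhaber--Schack fact that simplicial cohomology occurs inside the Hochschild cohomology of an incidence algebra. Since taking $\mathfrak{h}$-invariants is exact, $H^{q}(\mathfrak{n}, \mathfrak{g}_A(\mathcal{P}))^{\mathfrak{h}}$ is the cohomology of the weight-zero subcomplex of $C^{\bullet}(\mathfrak{n}, \mathfrak{g}_A(\mathcal{P})) = \bigwedge^{\bullet}\mathfrak{n}^{*} \otimes \mathfrak{g}_A(\mathcal{P})$. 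A basis vector $E_{i_1,j_1}^{*} \wedge \cdots \wedge E_{i_q,j_q}^{*} \otimes v$ has $\mathfrak{h}$-weight $\sum_{a}(\epsilon_{j_a} - \epsilon_{i_a}) + \mathrm{wt}(v)$, and since every arc $i_a \prec j_a$ increases, the largest index that occurs is a pure head; thus the weight cannot vanish when $v$ is diagonal and $q \geq 1$, and when $v = E_{p,\ell}$ the weight vanishes exactly when, after reindexing, the arcs are $i_0 \prec i_1,\ i_1 \prec i_2,\ \dots,\ i_{q-1} \prec i_q$ and $v = E_{i_0, i_q}$ (which lies in $\mathfrak{g}_A(\mathcal{P})$ by transitivity of $\preceq$). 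Hence the weight-zero subcomplex has, in degree $q$, a basis indexed by the $q$-simplices of $\Sigma(\mathcal{P})$ (i.e.\ the chains in $\mathcal{P}$ with $q+1$ elements), and a direct calculation of the Chevalley--Eilenberg differential on these basis vectors — in which each bracket $[E_{i_a, i_{a+1}}, E_{i_{a+1}, i_{a+2}}] = E_{i_a, i_{a+2}}$ recognizes a face and the alternating signs match — identifies it with the simplicial coboundary. Consequently the weight-zero subcomplex agrees with the ordered simplicial cochain complex of $\Sigma(\mathcal{P})$ in positive degrees, so $H^{q}(\mathfrak{n}, \mathfrak{g}_A(\mathcal{P}))^{\mathfrak{h}} \cong H^{q}(\Sigma(\mathcal{P}), \mathbf{k})$ for $q \geq 1$. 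Feeding $q = 1$ and $q = 2$ back into the displayed decomposition produces the summands $\mathfrak{h}^{*} \otimes H^{1}(\Sigma(\mathcal{P}), \mathbf{k})$ and $H^{2}(\Sigma(\mathcal{P}), \mathbf{k})$, which completes the identification. (Only the degree-zero term of the weight-zero complex is sensitive to the passage from $\mathfrak{g}(\mathcal{P})$ to the trace-zero algebra $\mathfrak{g}_A(\mathcal{P})$, and in characteristic zero this does not change the image of the first coboundary, hence leaves $H^{q}$ for $q \geq 1$ intact while turning $H^{0}$ into $\mathfrak{c}$.)

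The main obstacle is the combinatorial identification just sketched: one must pin down exactly which Chevalley--Eilenberg cochains of $\mathfrak{n}$ are $\mathfrak{h}$-invariant — excluding every non-telescoping family of arcs as well as every diagonal coefficient in positive degree — and then verify that the restricted differential really is the simplicial coboundary, with all the signs and the repeated appeals to transitivity, which is precisely what keeps each contraction inside $\mathfrak{g}_A(\mathcal{P})$. A subsidiary technical point is the reduction step itself: although the collapse of the Hochschild--Serre spectral sequence for a torus quotient is classical, carrying it out carefully enough that the answer is a genuine internal direct sum — and not merely an associated graded — is where the general Coll--Gerstenhaber semidirect-product theorem does its work.
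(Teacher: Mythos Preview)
The paper does not prove this theorem; it is quoted from \textbf{\cite{CG}} and described there only as ``a corollary to their more general theorem regarding Lie semi-direct products for which type-A Lie poset algebras are the prime example.'' Your sketch follows exactly that route --- the Hochschild--Serre--type collapse for the torus factor $\mathfrak{h}$ in $\mathfrak{g}_A(\mathcal{P})=\mathfrak{h}\ltimes\mathfrak{n}$, followed by the identification of the $\mathfrak{h}$-weight-zero Chevalley--Eilenberg cochains of $\mathfrak{n}$ with the ordered simplicial cochains of $\Sigma(\mathcal{P})$ --- and this is precisely the argument of \textbf{\cite{CG}}. Your combinatorial step (that a weight-zero wedge of root vectors must telescope into a single chain, because the arcs form an acyclic directed graph whose in/out degree balance forces a simple path) is the key point, and it is correct.
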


Observe that the necessary and sufficient conditions for a Lie poset algebra to be absolutely rigid, i.e., to have no infinitesimal deformations, is the simultaneous vanishing of $(\bigwedge^2\mathfrak{h}^*\bigotimes\mathfrak{c})$, $(\mathfrak{h}^*\bigotimes H^1(\Sigma(\mathcal{P}),\mathbf{k}))$, and $H^2(\Sigma(\mathcal{P}),\mathbf{k})$. 
Since we are only considering Frobenius Lie algebras, $\mathfrak{c}$ is trivial. To show that $(\mathfrak{h}^*\bigotimes H^1(\Sigma(\mathcal{P}),\mathbf{k}))$ and $H^2(\Sigma(\mathcal{P}),\mathbf{k})$ are also trivial, we invoke the  Universal Coefficient Theorem, where it suffices to show that $H_n(\Sigma(\mathcal{P}),\mathbf{k})=0$ for $n=1,2$. In fact, we prove a stronger result.

\begin{theorem}\label{Nohomology}
If $\mathcal{P}$ is a Frobenius poset of height two or less, then $\Sigma(\mathcal{P})$ is contractible.
\end{theorem}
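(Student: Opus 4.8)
The plan is to use discrete Morse theory on the order complex $\Sigma(\mathcal{P})$ and exhibit an acyclic matching whose unique critical cell is a vertex; by the fundamental theorem of discrete Morse theory this forces $\Sigma(\mathcal{P})$ to collapse to a point, hence to be contractible. The natural way to build such a matching is to use the combinatorial structure of Frobenius posets supplied by Theorems~\ref{thm:h01FC}, \ref{thm:h2frobchar}, and~\ref{thm:npurechar}: in heights zero and one the Hasse diagram is a tree, and in height two the poset is assembled from copies of $\mathcal{P}(2,1,1)$ and $\mathcal{P}(1,1,2)$ by the restrictive gluing rules $A_1,A_2,C,D_1,D_2,F$ together with the tree-like attachment of pure components. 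Each of these building blocks has a contractible (indeed collapsible) order complex — a single triangle, or a pair of triangles sharing an edge — and the gluing rules only ever identify an extremal vertex (or, in rules of type $F$, a pair of extremal vertices forming an edge) of one piece with an extremal vertex/edge of the already-built poset. So the strategy is to order the building blocks by the stage at which they are glued on and construct the matching block by block.

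Concretely, I would first handle heights zero and one: if the Hasse diagram is a tree $T$ then $\Sigma(\mathcal{P})$ is exactly (the geometric realization of) $T$, a contractible graph, done. For height two, I would induct on the number of building blocks in the gluing construction. The base case is a single copy of $\mathcal{P}(2,1,1)$ or $\mathcal{P}(1,1,2)$, whose order complex is two $2$-simplices glued along an edge — collapsible. For the inductive step, suppose $\mathcal{P}$ is obtained from a Frobenius height-two poset $\mathcal{Q}$ (with $\Sigma(\mathcal{Q})$ already shown collapsible to a point via a matching $M_{\mathcal{Q}}$) by one gluing move, and let $K$ be the (collapsible) order complex of the newly attached piece. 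The point is that $\Sigma(\mathcal{P}) = \Sigma(\mathcal{Q}) \cup K$ where the intersection $\Sigma(\mathcal{Q}) \cap K$ is a single vertex or a single edge — in particular itself collapsible, and collapsible \emph{within} $K$ onto a vertex lying in $\Sigma(\mathcal{Q})$. Then one collapses $K$ down onto $\Sigma(\mathcal{Q}) \cap K \subseteq \Sigma(\mathcal{Q})$ first (extending the matching by a matching of the cells of $K \setminus \Sigma(\mathcal{Q})$), and afterwards runs $M_{\mathcal{Q}}$; concatenating the two matchings yields an acyclic matching on $\Sigma(\mathcal{P})$ with a single critical vertex. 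One must separately incorporate the non-pure case of Theorem~\ref{thm:npurechar}: there the extra covering relations between extremal elements, and the tree-structure obtained by contracting pure components, mean $\Sigma(\mathcal{P})$ is built from the collapsible order complexes of the pure components by attaching edges and paths in a tree pattern — the same "glue a collapsible piece along a vertex/edge" argument applies, with the covering-relation paths contributing only $1$-cells that collapse onto their endpoints.

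The main obstacle, and where care is genuinely needed, is verifying that the pairwise intersections arising at each gluing step are as small as claimed — a single vertex or a single edge — and, correspondingly, that the gluing rules permitted by Theorem~\ref{thm:h2frobchar} never force two blocks to share more than one extremal element (or a whole chain of length two). This is exactly the content of conditions (ii) and (iii) of Theorem~\ref{thm:npurechar} and the precise form of the allowed rules $A_1,A_2,C,D_1,D_2,F$; rule $F$ is the delicate one since it identifies two pairs of extremal elements, and one must check that the two identified extremal vertices of the attached $\mathcal{P}(2,1,1)$ or $\mathcal{P}(1,1,2)$, together with their common lower/upper cover, meet $\Sigma(\mathcal{Q})$ in exactly an edge (the edge spanned by those two extremal vertices), not in a triangle. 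Granting this, the collapsibility bookkeeping is routine: a standard lemma (e.g. that if $K$ collapses onto $L$ and $L$ collapses to a point then $K$ collapses to a point, together with its relative/gluing version) finishes the induction. I would therefore organize the write-up as: (1) the height $\le 1$ case; (2) a lemma that each building block's order complex collapses onto any prescribed extremal vertex or extremal edge; (3) the gluing lemma on collapsibility; (4) the induction assembling (2) and (3) over the construction in Theorems~\ref{thm:h2frobchar} and~\ref{thm:npurechar}.
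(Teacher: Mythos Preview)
Your approach is essentially the paper's: discrete Morse theory applied inductively along the gluing construction of Theorems~\ref{thm:h2frobchar} and~\ref{thm:npurechar}, with the height-$\le 1$ case dispatched via the tree characterization and the non-pure case reduced to the pure one by the tree structure after contracting pure components. The paper carries out the inductive step by writing down explicit Morse-function values on every newly added simplex for each gluing rule, while you phrase the same thing as ``collapse the new piece $K$ onto $K\cap\Sigma(\mathcal{Q})$, then run the old matching''; these are equivalent formulations of the same argument.

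One correction to your analysis of rule $F$: all \emph{three} extremal vertices $c,a_1,a_2$ of the attached piece are identified with vertices $x,y,z$ of $\mathcal{Q}$ (not two), and since $a_1,a_2$ are incomparable in the piece there is no edge $\{a_1,a_2\}$ in $K$. Hence $K\cap\Sigma(\mathcal{Q})$ is the two-edge path on $\{c,a_1\}\cup\{c,a_2\}$, not a single edge. This is still a tree and $K$ (two triangles sharing the edge $\{c,b\}$) still collapses onto it, so your collapsibility bookkeeping goes through unchanged once this is corrected.
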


For heights zero and one, Theorem~\ref{Nohomology} follows directly from Theorem~\ref{thm:h01FC} and the fact that, for these heights, the Hasse diagram is homotopic to the corresponding simplicial complex. The proof for height-two posets is less straightforward. In the pure case, we make use of the theory of discrete Morse functions \textbf{\cite{Forman},} which requires the following definitions and theorem. 
\\*

Let $\Sigma$ be a simplicial complex, and 
$\alpha^{(p)}\in \Sigma$ be a $p$-simplex.

\begin{definition}
A function $f:\Sigma\to \mathbb{R}$ is a discrete Morse function if for every $\alpha^{(p)}\in \Sigma$
$$|\{\beta^{(p+1)}\supset\alpha^{(p)}~|~\beta^{(p+1)}\in \Sigma, f(\beta^{(p+1)})\le f(\alpha^{(p)})\}|\le 1$$
and 
$$|\{\gamma^{(p-1)}\subset\alpha^{(p)}~|~\gamma^{(p-1)}\in \Sigma, f(\gamma^{(p-1)})\ge f(\alpha^{(p)})\}|\le 1.$$
\end{definition}

\begin{definition}
A simplex $\alpha^{(p)}$ is critical if $$|\{\beta^{(p+1)}\supset\alpha^{(p)}~|~\beta^{(p+1)}\in \Sigma, f(\beta^{(p+1)})\le f(\alpha^{(p)})\}|=0$$
and 
$$|\{\gamma^{(p-1)}\subset\alpha^{(p)}~|~\gamma^{(p-1)}\in \Sigma, f(\gamma^{(p-1)})\ge f(\alpha^{(p)})\}|=0.$$
\end{definition}

\begin{Ex}\label{ex:DMF}
Consider the simplicial complex $\Sigma$ illustrated in Figure~\ref{fig:Simplex}. A discrete Morse function with a single critical simplex of $v_1$ is obtained by assigning values as follows: $f(v_1)=0$, $f(e_1)=1$, $f(v_2)=2$, $f(e_2)=3$, $f(v_3)=4$, $f(e_3)=5$, $f(v_4)=6$, $f(f_1)=7$, $f(e_4)=8$, $f(f_2)=9$, and $f(e_5)=10$.
\end{Ex}

\begin{theorem}\label{thm:DMT}
Suppose $\Sigma$ is a simplicial complex with a discrete Morse function. Then $\Sigma$ is homotopy equivalent to a CW complex with exactly one cell of dimension $p$ for each critical simplex of dimension $p$.
\end{theorem}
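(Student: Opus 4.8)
This is Forman's fundamental collapsing theorem of discrete Morse theory, and the plan is to reprove it along his original lines via level subcomplexes. The first step is to extract from $f$ the associated \emph{discrete gradient vector field} $V$: the two Morse conditions imply that for every simplex at most one of the two ``exceptional'' incidences -- a coface $\beta^{(p+1)}\supset\alpha^{(p)}$ with $f(\beta)\le f(\alpha)$, or a facet $\gamma^{(p-1)}\subset\alpha^{(p)}$ with $f(\gamma)\ge f(\alpha)$ -- can occur, and a simplex is critical exactly when neither occurs. Consequently the non-critical simplices are partitioned into matched pairs $\alpha^{(p)}\prec\beta^{(p+1)}$ with $f(\beta)\le f(\alpha)$, and the critical simplices are precisely the unmatched ones. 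One then checks that $V$ is \emph{acyclic}, i.e. admits no nontrivial closed $V$-path, because $f$ strictly decreases along $V$-paths. It is harmless to first perturb $f$ to an injective discrete Morse function with the same critical simplices and the same $V$, which streamlines the bookkeeping.

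The second step is to filter $\Sigma$ by the level subcomplexes $\Sigma(c):=\bigcup_{f(\alpha)\le c}\overline{\alpha}$ and track the homotopy type as $c$ increases through the (finitely many) values of $f$. When $c$ crosses a range containing only non-critical simplices, those simplices enter in matched pairs $(\alpha,\beta)$, and acyclicity of $V$ lets one order these pairs so that, at the moment each is removed, $\beta$ is a maximal simplex of the current complex whose unique free facet is $\alpha$; thus $\Sigma(c')$ deformation retracts onto $\Sigma(c)$ through a chain of elementary collapses, and the homotopy type is unchanged. When $c$ crosses the value of a critical $p$-simplex $\sigma$, the structure of discrete Morse functions forces every proper face of $\sigma$ to have strictly smaller $f$-value, so all of $\partial\sigma$ already lies in $\Sigma(c)$ and $\Sigma(c')$ is obtained from $\Sigma(c)$ by attaching a single $p$-cell along its boundary sphere.

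Iterating over all critical values shows that $\Sigma$ is built from the empty complex by a sequence of elementary collapses -- each a homotopy equivalence -- interspersed with the attachment of one $p$-cell for each critical $p$-simplex. Assembling the cell attachments yields a CW complex having exactly one cell of dimension $p$ for every critical simplex of dimension $p$ and homotopy equivalent to $\Sigma$, which is the assertion.

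The delicate point -- and the one I expect to be the main obstacle -- is the claim in the second step that a range containing only non-critical simplices can be dismantled by honest elementary collapses, i.e. that the matched pairs $(\alpha,\beta)$ can be sequenced so that each $\beta$ is, at its turn, a maximal simplex with $\alpha$ as its only free facet. This is exactly what acyclicity of $V$ provides: the $V$-path relation induces a partial order on the matched pairs, any linear extension of which makes all the collapses legitimate. Proving acyclicity and extracting this order is the single genuinely technical ingredient; the critical-crossing case and the bookkeeping are routine.
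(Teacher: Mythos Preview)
Your outline is a faithful sketch of Forman's original argument, but note that the paper does not supply a proof of this theorem at all: it is stated as a quoted result from the discrete Morse theory literature (Forman, \textit{S\'em.\ Lothar.\ Combin.}\ 48) and used as a black box in the proof of Theorem~\ref{Nohomology}. So there is no ``paper's own proof'' to compare against; your write-up is simply reproducing the standard proof of a cited tool rather than something the authors establish themselves.
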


We are now in a position to return to the proof of Theorem~\ref{Nohomology}.

\begin{proof}[Proof of Theorem~\ref{Nohomology}]
Recall from Section~\ref{sec:frobchar}, that all Frobenius, pure, height-two posets can be inductively constructed by gluing together (identifying minimal elements and maximal elements) copies of $\mathcal{P}(2,1,1)$ and $\mathcal{P}(1,1,2)$. So let $\mathcal{P}_n$ be a Frobenius, pure, height-two poset containing $n$ rank-one elements. The proof is by induction on the number of rank-one elements $n$.

For the base case consider $\Sigma(\mathcal{P}_1)$, which is homotopic to $\Sigma(\mathcal{P}(1,1,2))$ as well as $\Sigma(\mathcal{P}(2,1,1))$; that is, $\Sigma(\mathcal{P}_1)\cong\Sigma(\mathcal{P}(2,1,1))\cong\Sigma(\mathcal{P}(1,1,2))$. See Figure~\ref{fig:Simplex}.
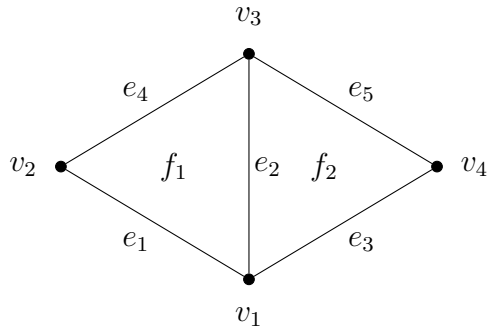
\begin{figure}[H]
$$\begin{tikzpicture}
\node (v1) at (-3,0) [circle, draw = black, fill = black, inner sep = 0.5mm] {};
\node (v4) at (-0.5,-1.5) [circle, draw = black, fill = black, inner sep = 0.5mm] {};
\node (v2) at (-0.5,1.5) [circle, draw = black, fill = black, inner sep = 0.5mm] {};
\node (v3) at (2,0) [circle, draw = black, fill = black, inner sep = 0.5mm] {};
\draw (v1) -- (v2) -- (v3) -- (v4) -- (v1);
\draw (v2) -- (v4);
\node at (-1.5,0) {$f_1$};
\node at (0.5,0) {$f_2$};
\node at (-0.5,2) {$v_3$};
\node at (1,1) {$e_5$};
\node at (2.5,0) {$v_4$};
\node at (1,-1) {$e_3$};
\node at (-0.5,-2) {$v_1$};
\node at (-2,-1) {$e_1$};
\node at (-3.5,0) {$v_2$};
\node at (-2,1) {$e_4$};
\node at (-0.25,0) {$e_2$};
\end{tikzpicture}$$
\caption{$\Sigma(\mathcal{P}(2,1,1))\cong\Sigma(\mathcal{P}(1,1,2))$}\label{fig:Simplex}
\end{figure}

\noindent
In Example~\ref{ex:DMF}, a discrete Morse function is given for $\Sigma(\mathcal{P}_1)$ with a single critical simplex of vertex $v_1$. Thus, by Theorem~\ref{thm:DMT}, $\Sigma(\mathcal{P}_1)$ is contractible.

Now, continue to adjoin copies of $\mathcal{P}(1,1,2)$ and $\mathcal{P}(2,1,1)$ to $\mathcal{P}_1$ to form the Frobenius poset, $\mathcal{P}_{n-1}$. Assume that there exists a discrete Morse function  $f:\Sigma(\mathcal{P}_{n-1})\to \mathbb{R}$ which has a single critical simplex corresponding to $v_1$ of $\Sigma(\mathcal{P}_1)$. Thus, $\Sigma(\mathcal{P}_{n-1})$ is contractible by Theorem~\ref{thm:DMT}. Now, form $\mathcal{P}_n$ from $\mathcal{P}_{n-1}$ by adjoining a copy of $\mathcal{P}(2,1,1)$ or $\mathcal{P}(1,1,2)$ in such a way that $\mathcal{P}_n$ is Frobenius; denote the simplicial complex corresponding to this new copy of $\mathcal{P}(2,1,1)$ or $\mathcal{P}(1,1,2)$ by $K$. Extending $f:\Sigma(\mathcal{P}_{n})\to \mathbb{R}$ breaks into three cases:
\begin{enumerate}
    \item If $\mathcal{P}_n$ is formed by attaching a copy of $\mathcal{P}(2,1,1)$ or $\mathcal{P}(1,1,2)$ to $\mathcal{P}_{n-1}$ via rules $A_1$, $A_2$, or $C$ of Lemma~\ref{lem:table}, then either vertex $v_1$ or $v_2$ in Figure~\ref{fig:Simplex} of $K$ is being identified with a vertex of $\Sigma(\mathcal{P}_{n-1})$. If the vertex is $v_1$, then extend $f$ so that $f(e_1)=p+1$, $f(v_2)=p+2$, $f(e_2)=p+3$, $f(v_3)=p+4$, $f(e_3)=p+5$, $f(v_4)=p+6$, $f(f_1)=p+7$, $f(e_4)=p+8$, $f(f_2)=p+9$, and $f(e_5)=p+10$. If the vertex is $v_2$, then extend $f$ so that $f(e_1)=p+1$, $f_(v_1)=p+2$, $f(e_4)=p+3$, $f(v_3)=p+4$, $f(f_1)=p+5$, $f(e_2)=p+6$, $f(e_5)=p+7$, $f(v_4)=p+8$, $f(f_2)=p+9$, $f(e_3)=p+10$.
    \item If $\mathcal{P}_n$ is formed by attaching a copy of $\mathcal{P}(2,1,1)$ of $\mathcal{P}(1,1,2)$ to $\mathcal{P}_{n-1}$ via rules $D_1$ or $D_2$, then $K$ is adjoined to $\Sigma(\mathcal{P}_{n-1})$ by identifying two edges each of which contain a vertex of degree two. Without loss of generality, assume we are identifying edge $e_1$ in Figure~\ref{fig:Simplex} of $K$. In this case, extend $f$ so that $f(e_4)=p+1$, $f(v_2)=p+2$, $f(f_1)=p+3$, $f(e_2)=p+4$, $f(e_3)=p+5$, $f(v_4)=p+6$, $f(f_2)=p+7$, and $f(e_5)=p+8$.
    \item If $\mathcal{P}_n$ is formed by attaching a copy of $\mathcal{P}(2,1,1)$ or $\mathcal{P}(1,1,2)$ to $\mathcal{P}_{n-1}$ via rule $F$, then adjacent edges of $K$ are identified with adjacent edges of $\Sigma(\mathcal{P}_{n-1})$, where both edges must contain a vertex of degree two; that is, either edges $e_1$ and $e_3$ in Figure~\ref{fig:Simplex} of $K$, or $e_1$ and $e_4$ in Figure~\ref{fig:Simplex} of $K$. If we are identifying edges $e_1$ and $e_3$ of $K$, then extend $f$ by $f(e_2)=p+1$, $f(v_3)=p+2$, $f(f_1)=p+3$, $f(e_4)=p+4$, $f(f_2)=p+5$, and $f(e_5)=p+6$. Otherwise, if we are identifying edges $e_1$ and $e_4$ of $K$, then extend $f$ by $f(f_1)=p+1$, $f(e_2)=p+2$, $f(e_5)=p+3$, $f(v_4)=p+4$, $f(f_2)=p+5$, and $f(e_3)=p+6$.
\end{enumerate} 
It is routine to verify that the resulting $f:\Sigma(\mathcal{P}_n)\to\mathbb{R}$ is a discrete Morse function, and that no new critical simplices have been added in extending of $f$. Thus, $f$ has a single critical simplex in vertex $v_1$ of $\Sigma(\mathcal{P}_1)$; that is, $\Sigma(\mathcal{P}_n)$ is contractible by Theorem~\ref{thm:DMT}. Therefore, the result for  Frobenius, pure, height-two posets follows by induction.

Finally, we consider non-pure, height-two posets. Let $\mathcal{P}$ be such a poset with pure components $\{\mathcal{P}_i\}_{i=1}^n$. By Theorem~\ref{thm:npurechar}, $\Sigma(\mathcal{P})$ has the property that, if the simplicial complexes corresponding to the $\{\mathcal{P}_i\}_{i=1}^n$ are contracted to a point, then the resulting simplicial complex is a tree. Since the simplicial complexes corresponding to the $\{\mathcal{P}_i\}_{i=1}^n$ are contractible, as are trees, the result follows.
\end{proof}

\begin{remark}
It is possible to extend the discrete Morse function above to include using rules $B$, $E_1$, $E_2$, $G_1$, $G_2$, and $H$ of Table~\ref{tab:h2fassem} in such a way that $B$, $E_1$, $E_2$, $G_1$, and $G_2$ contribute a single critical edge and $H$ contributes two critical edges. Thus, if $\mathcal{P}$ is a pure, height-two poset built from copies of $\mathcal{P}(1,1,2)$ or $\mathcal{P}(2,1,1)$ by applying the gluing rules of Table~\ref{tab:h2fassem}, then there exists $d\in\mathbb{Z}_{\ge 0}$ such that $\Sigma(\mathcal{P})$ is homotopic to a wedge product of $d$ one-spheres and $\ind(\mathfrak{g}_A(\mathcal{P}))=d$. Such a topological interpretation of the index also holds for connected, height-one posets; that is, if $\mathcal{P}$ is a connected, height-one poset, then $\Sigma(\mathcal{P})$ is a wedge product of $d$ one-spheres and $\ind(\mathfrak{g}_A(\mathcal{P}))=d$.
\end{remark}

\begin{remark}
In height-three, the natural analogue of the Frobenius posets $\mathcal{P}(1,1,2)$ and $\mathcal{P}(2,1,1)$ is the poset $\mathcal{Q}=\{1,2,3,4,5,6\}$ defined by the relations $1\preceq 2\preceq 3,4$; $3\preceq 5$; and $4\preceq 6$, along with its dual $\mathcal{Q}^*$. See Figure~\ref{fig:tuningfork}.
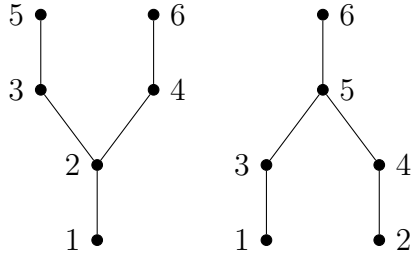
\begin{figure}[H]
$$\begin{tikzpicture}
\node (v4) at (-2,3) [circle, draw = black, fill = black, inner sep = 0.5mm, label=left:{5}] {};
\node (v6) at (-0.5,3) [circle, draw = black, fill = black, inner sep = 0.5mm, label=right:{6}] {};
\node (v5) at (-0.5,2) [circle, draw = black, fill = black, inner sep = 0.5mm, label=right:{4}] {};
\node (v3) at (-2,2) [circle, draw = black, fill = black, inner sep = 0.5mm, label=left:{3}] {};
\node (v2) at (-1.25,1) [circle, draw = black, fill = black, inner sep = 0.5mm, label=left:{2}] {};
\node (v1) at (-1.25,0) [circle, draw = black, fill = black, inner sep = 0.5mm, label=left:{1}] {};
\draw (v1) -- (v2) -- (v3) -- (v4);
\draw (v2) -- (v5) -- (v6);
\node (v7) at (1,0)  [circle, draw = black, fill = black, inner sep = 0.5mm, label=left:{1}] {};
\node (v11) at (2.5,0)  [circle, draw = black, fill = black, inner sep = 0.5mm, label=right:{2}] {};
\node (v12) at (2.5,1)  [circle, draw = black, fill = black, inner sep = 0.5mm, label=right:{4}] {};
\node (v8) at (1,1)  [circle, draw = black, fill = black, inner sep = 0.5mm, label=left:{3}] {};
\node (v9) at (1.75,2)  [circle, draw = black, fill = black, inner sep = 0.5mm, label=right:{5}] {};
\node (v10) at (1.75,3)  [circle, draw = black, fill = black, inner sep = 0.5mm, label=right:{6}] {};
\draw (v7) -- (v8) -- (v9) -- (v10);
\draw (v11) -- (v12) -- (v9);
\end{tikzpicture}$$
\caption{$\mathcal{Q}$ and $\mathcal{Q^*}$}\label{fig:tuningfork}
\end{figure}
\noindent
Both $\mathcal{Q}$ and $\mathcal{Q}^*$ are Frobenius posets. Moreover, it can be shown that one obtains Frobenius height-three posets by gluing together copies of $\mathcal{Q}$ and $\mathcal{Q}^*$ using the height-three analogues of gluing rules $A_1$, $A_2$, $B$, $D_1$, $D_2$, and $F$. Interestingly, we once again have that if $\mathcal{P}$ is a connected poset built from copies of $\mathcal{Q}$ or $\mathcal{Q}^*$ using the height-three analgoes of the gluing rules in Table~\ref{tab:h2fassem}, then there exists $d\in\mathbb{Z}_{\ge 0}$ such that $\Sigma(\mathcal{P})$ is homotopic to a wedge product of $d$ one-spheres and $\ind(\mathfrak{g}_A(\mathcal{P}))=d$.  Of more important note is that height-three Frobenius posets are not completely characterized as in Lemma \ref{lem:table}. For example, the Frobenius poset $\mathcal{P}(1,2,2,2)$ cannot be built using this analogous gluing procedure.  See Figure \ref{stargates1} \textup(left\textup).
\end{remark}

\begin{remark} Calculations suggest that Theorem \ref{Nohomology} is true for posets of arbitrary height, and we conjecture that this is so.
The following examples are suggestive. It can be shown that the type-A Lie poset algebras associated with $\mathcal{P}(1,2,\hdots,2)$ and $\mathcal{P}(2,\hdots,2,1)$ as well as the natural generalization of our running example $\mathcal{P}(1,1,2)$, which we denote by $SG(n)$ and define by $1\preceq\hdots\preceq n$ and $1\preceq\hdots\preceq\lceil\frac{n}{2}\rceil\preceq n+1$, are Frobenius.  See Figure \ref{stargates1}.

\begin{figure}[H]
$$\begin{tikzpicture}
	\node (1) at (0, 0) [circle, draw = black, fill = black, inner sep = 0.5mm]{};
	\node (2) at (-0.5, 0.5)[circle, draw = black, fill = black, inner sep = 0.5mm] {};
	\node (3) at (0.5, 0.5) [circle, draw = black, fill = black, inner sep = 0.5mm] {};
    \node (4) at (-0.5, 1) [circle, draw = black, fill = black, inner sep = 0.5mm] {};
    \node (5) at (0.5, 1) [circle, draw = black, fill = black, inner sep = 0.5mm] {};
    \node (6) at (0, 1.5) {$\vdots$};
    \node (7) at (-0.5, 2)[circle, draw = black, fill = black, inner sep = 0.5mm] {};
	\node (8) at (0.5, 2) [circle, draw = black, fill = black, inner sep = 0.5mm] {};
    \node (9) at (-0.5, 2.5) [circle, draw = black, fill = black, inner sep = 0.5mm] {};
    \node (10) at (0.5, 2.5) [circle, draw = black, fill = black, inner sep = 0.5mm] {};
    \draw (4)--(3)--(1)--(2)--(5);
    \draw (5)--(3);
    \draw (2)--(4);
    \draw (7)--(9)--(8);
    \draw (7)--(10)--(8);
    \addvmargin{1mm}
\end{tikzpicture}\quad\quad\begin{tikzpicture}
\node (v1) at (-0.5,0) [circle, draw = black, fill = black, inner sep = 0.5mm] {};
\node at (-0.5,0.5) [circle, draw = black, fill = black, inner sep = 0.5mm] {};
\node (v3) at (-0.5,1) [circle, draw = black, fill = black, inner sep = 0.5mm] {};
\node at (-0.5,1.5) [circle, draw = black, fill = black, inner sep = 0.5mm] {};
\node (v2) at (-0.5,2) [circle, draw = black, fill = black, inner sep = 0.5mm] {};
\node (v4) at (0,1.5) [circle, draw = black, fill = black, inner sep = 0.5mm] {};
\node (v5) at (-0.5, 2.5) [circle, draw = black, fill = black, inner sep = 0.5mm] {};
\draw (v1) -- (v2);
\draw (v3) -- (v4);
\draw (v2) -- (v5);
\end{tikzpicture}$$
\caption{$\mathcal{P}(1,2,\hdots,2)$ and $SG(6)$}\label{stargates1}
\end{figure}
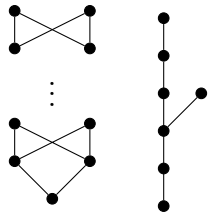

\noindent
For $\mathcal{P}=\mathcal{P}(1,2,\hdots,2)$ or $\mathcal{P}(2,\hdots,2,1)$, one finds that $H_n(\Sigma(\mathcal{P}),\mathbf{k})=0$ for $n>0$ by using the Mayor-Vietoris Sequence and the fact that $\Sigma(\mathcal{P})$ is obtained by inductively taking two-point suspensions starting from a point. As for $SG(n)$, it is clear that the simplicial complex $\Sigma(SG(n))$ is formed by adjoining a $\lceil\frac{n}{2}\rceil$-simplex to a $(n-1)$-simplex along a face. Such a space is star-convex and thus contractible.  Furthermore, taking any number of the Frobenius posets mentioned above, it follows from Remark~\ref{rem:minmax} that identifying a single minimal (resp. maximal) element of each also results in a Frobenius poset.  At the simplical level this corresponds to taking a wedge sum, so that the Mayer-Veotoris Theorem, once again, gives contractablity. 
\end{remark}

We have the following immediate corollary to Theorem~\ref{Nohomology}.

\begin{corollary}
If $\mathcal{P}$ be a Frobenius poset of height two or less, then

$$H^2(\Sigma(\mathcal{P}),\mathbf{k})=H^1(\Sigma(\mathcal{P}),\mathbf{k})=0.$$

\end{corollary}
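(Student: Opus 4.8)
The plan is to deduce this directly from Theorem~\ref{Nohomology} together with the homotopy invariance of simplicial (co)homology. First I would invoke Theorem~\ref{Nohomology} to conclude that, since $\mathcal{P}$ is a Frobenius poset of height at most two, the simplicial complex $\Sigma(\mathcal{P})$ is contractible, i.e., homotopy equivalent to a point. Then, because simplicial homology and cohomology with coefficients in a field $\mathbf{k}$ are invariants of homotopy type, one has $H_n(\Sigma(\mathcal{P}),\mathbf{k})\cong H_n(\mathrm{pt},\mathbf{k})$ and $H^n(\Sigma(\mathcal{P}),\mathbf{k})\cong H^n(\mathrm{pt},\mathbf{k})$ for all $n$.

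Next I would record that $H^n(\mathrm{pt},\mathbf{k})=0$ for all $n>0$; specializing to $n=1$ and $n=2$ gives $H^1(\Sigma(\mathcal{P}),\mathbf{k})=H^2(\Sigma(\mathcal{P}),\mathbf{k})=0$. (Equivalently, one may first observe $H_1(\Sigma(\mathcal{P}),\mathbf{k})=H_2(\Sigma(\mathcal{P}),\mathbf{k})=0$ from contractibility and then pass to cohomology via the Universal Coefficient Theorem over the field $\mathbf{k}$, exactly as was done in Section~\ref{sec:rigid}; but over a field the two routes coincide and the appeal to the Universal Coefficient Theorem is not strictly necessary here.)

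There is essentially no obstacle: all of the real work is contained in Theorem~\ref{Nohomology}, whose proof handles the height-zero and height-one cases via the homotopy equivalence between the Hasse diagram and $\Sigma(\mathcal{P})$ and Theorem~\ref{thm:h01FC}, and the height-two cases via the discrete Morse theory argument (base case $\Sigma(\mathcal{P}(2,1,1))\cong\Sigma(\mathcal{P}(1,1,2))$, inductive extension of the discrete Morse function across the gluing rules $A_1,A_2,C,D_1,D_2,F$, and the tree structure for non-pure posets from Theorem~\ref{thm:npurechar}). Given that, the corollary is a one-line consequence, and the only thing to be careful about is simply citing homotopy invariance of (co)homology correctly.
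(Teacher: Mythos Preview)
Your proposal is correct and matches the paper's approach exactly: the paper states this as an ``immediate corollary'' to Theorem~\ref{Nohomology}, and your deduction via contractibility and homotopy invariance of (co)homology (with the optional detour through the Universal Coefficient Theorem, which the paper also mentions earlier in Section~\ref{sec:rigid}) is precisely what is intended.
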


\noindent
Upon applying Theorem~\ref{CG}, we have thus established the rigidity theorem noted in the Introduction.

\begin{theorem}\label{thm:main2}
A Frobenius Lie poset subalgebra of $\mathfrak{sl}(n)$ corresponding to a poset of height zero, one, or two is absolutely rigid.
\end{theorem}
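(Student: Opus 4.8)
The plan is to obtain Theorem~\ref{thm:main2} as an essentially formal consequence of the Coll--Gerstenhaber decomposition (Theorem~\ref{CG}) together with the contractibility established in Theorem~\ref{Nohomology}. Recall the standard deformation-theoretic principle (cf.\ \cite{TG}) that a Lie algebra $\mathfrak{g}$ is absolutely rigid exactly when its second Chevalley--Eilenberg cohomology with adjoint coefficients vanishes, $H^2(\mathfrak{g},\mathfrak{g})=0$, since this is the space controlling infinitesimal deformations. Thus it suffices to prove $H^2(\mathfrak{g}_A(\mathcal{P}),\mathfrak{g}_A(\mathcal{P}))=0$ for every Frobenius poset $\mathcal{P}$ of height zero, one, or two, and for this I would simply show that each of the three summands on the right-hand side of Theorem~\ref{CG} vanishes.

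First I would dispatch the term $\bigwedge^{2}\mathfrak{h}^{*}\otimes\mathfrak{c}$ by observing that a Frobenius Lie algebra has trivial center: if $F\in\mathfrak{g}_A(\mathcal{P})^{*}$ is a Frobenius functional and $z\in\mathfrak{c}$, then $B_F(z,y)=F([z,y])=0$ for all $y\in\mathfrak{g}_A(\mathcal{P})$, so nondegeneracy of $B_F$ forces $z=0$; hence $\mathfrak{c}=0$ and this summand is zero. For the remaining two summands, $\mathfrak{h}^{*}\otimes H^1(\Sigma(\mathcal{P}),\mathbf{k})$ and $H^2(\Sigma(\mathcal{P}),\mathbf{k})$, I would invoke Theorem~\ref{Nohomology}: since $\Sigma(\mathcal{P})$ is contractible, its reduced homology vanishes in all positive degrees, and by the Universal Coefficient Theorem the same holds for cohomology with coefficients in $\mathbf{k}$; in particular $H^1(\Sigma(\mathcal{P}),\mathbf{k})=H^2(\Sigma(\mathcal{P}),\mathbf{k})=0$. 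Combining, Theorem~\ref{CG} gives $H^2(\mathfrak{g}_A(\mathcal{P}),\mathfrak{g}_A(\mathcal{P}))=0$, so $\mathfrak{g}_A(\mathcal{P})$ has no nontrivial infinitesimal deformations and is absolutely rigid.

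The main obstacle has already been overcome before reaching this point: essentially all the content lies in the two inputs cited above, namely the Coll--Gerstenhaber formula for $H^2(\mathfrak{g}_A(\mathcal{P}),\mathfrak{g}_A(\mathcal{P}))$ and the discrete Morse theory argument proving that $\Sigma(\mathcal{P})$ is contractible for Frobenius posets of restricted height (Theorem~\ref{Nohomology}, whose pure height-two case is handled inductively along the gluing rules $A_1,A_2,C,D_1,D_2,F$ of Table~\ref{tab:h2fassem}, and whose non-pure case reduces to a tree of contractible pieces via Theorem~\ref{thm:npurechar}). Given those, the present theorem is a short assembly, and the only points meriting an explicit word are the equivalence ``$H^2(\mathfrak{g},\mathfrak{g})=0 \Leftrightarrow \mathfrak{g}$ absolutely rigid'' and the triviality of the center of a Frobenius Lie algebra, both of which are immediate.
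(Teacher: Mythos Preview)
Your proposal is correct and follows essentially the same approach as the paper: both argue that $H^2(\mathfrak{g}_A(\mathcal{P}),\mathfrak{g}_A(\mathcal{P}))=0$ by showing each summand in Theorem~\ref{CG} vanishes, using triviality of the center (Frobenius) and contractibility of $\Sigma(\mathcal{P})$ from Theorem~\ref{Nohomology} together with the Universal Coefficient Theorem. Your version is slightly more explicit in justifying $\mathfrak{c}=0$, but the logical structure is identical.
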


\begin{remark}  If $\mathfrak{g}_A(\mathcal{P})$ is Frobenius, and $\mathcal{P}$ is of height two or less, then by a now-classical theorem of Gerstenhaber and Schack \textup{\textbf{\cite{G3}}}, the second Hochschild cohomology group $H^2(A(\mathcal{P}),A(\mathcal{P}))$ is trivial. This implies that the associative poset algebras corresponding to such Frobenius posets are also rigid.  
\end{remark}

\noindent

\section{Lie poset algebras in types B, C, and D}\label{sec:tbcd}

In this section, we provide definitions for posets of types B, C, and D, which allow us to develop matrix representations of Lie poset algebras in the other classical types.  The treatment here is consistent with the type-A approach; that is, such posets are in bijective correspondence with subalgbras which lie between a Cartan and Borel subalgebra. The proofs that these representations are well-defined are routine and are omitted.

\begin{definition}\label{def:bcd}
A Type C poset is a poset, \textup($\mathcal{P}, \preceq_{\mathcal{P}}\textup)$, on $\{-n,\hdots-1,1,\hdots, n\}$ such that
\begin{enumerate}
	\item If $i\preceq_{\mathcal{P}}j$, then $i\preceq_{\mathbb{Z}}j$;
    \item if $i\preceq_{\mathcal{P}}j$ and $j\preceq_{\mathcal{P}}k$, then $i\preceq_{\mathcal{P}}k$;
	\item if $i\neq -j$, then $i\preceq_{\mathcal{P}}j$ if and only if $-j\preceq_{\mathcal{P}}-i$.
\end{enumerate}
A Type B/D poset is a poset, \textup($\mathcal{P}, \preceq_{\mathcal{P}}\textup)$, on $\{-n,\hdots-1,1,\hdots, n\}$ satisfying 1-3 above as well as 
\begin{enumerate}
    \setcounter{enumi}{3}
    \item If $i\preceq_{\mathcal{P}}j$, then $-j\npreceq_{\mathcal{P}} i$.
\end{enumerate}
\end{definition}
\noindent
To find matrix representations in type-C and type-D, we now label the rows and columns by $\{-n,\hdots,-1,1\hdots, n\}$. For type B, an extra row and column must be added which intersect at the top left entry which contains the only nonzero entry of each, a one.

\begin{Ex}\label{ex:typeBCD}
The poset $\mathcal{P}$ on $\{-3, -2, -1, 1, 2, 3\}$ defined by $-1\preceq 2,3$; $-2\preceq 1,3$; and $-3\preceq 1,2$ 
may be regarded as a poset in types B, C, and D. The Hasse diagram of $\mathcal{P}$ is illustrated in Figure~\ref{fig:tBCD} (left).  The matrix representations of 
 $\mathfrak{g}_C(\mathcal{P})$ and $\mathfrak{g}_D(\mathcal{P})$\textup) are illustrated in Figure~\ref{fig:tBCD} (right).  
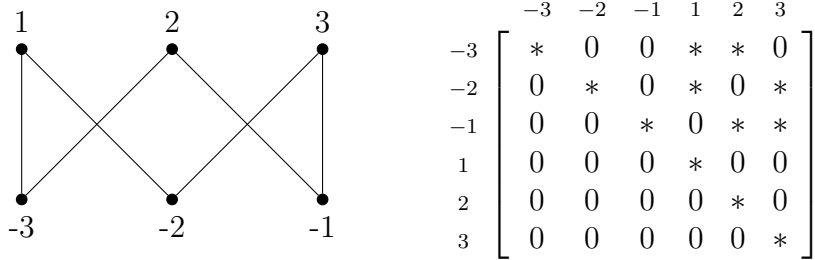
\begin{figure}[H]
$$\begin{tikzpicture}
	\node (-3) at (0, 0) [circle, draw = black, fill = black, inner sep = 0.5mm, label=below:{-3}]{};
	\node (-2) at (2, 0)[circle, draw = black, fill = black, inner sep = 0.5mm, label=below:{-2}] {};
	\node (-1) at (4, 0) [circle, draw = black, fill = black, inner sep = 0.5mm, label=below:{-1}] {};
	\node (1) at (0, 2) [circle, draw = black, fill = black, inner sep = 0.5mm, label=above:{1}]{};
	\node (2) at (2, 2)[circle, draw = black, fill = black, inner sep = 0.5mm, label=above:{2}] {};
	\node (3) at (4, 2) [circle, draw = black, fill = black, inner sep = 0.5mm, label=above:{3}] {};
	\node (5) at (7,1) {
  \kbordermatrix{
    & -3 & -2 & -1 & 1 & 2 & 3 \\
   -3 & * & 0 & 0 & * & * & 0  \\
   -2 & 0 & * & 0 & * & 0 & * \\
   -1 & 0 & 0 & * & 0 & * & * \\
   1 & 0 & 0 & 0 & * & 0 & 0 \\
   2 & 0 & 0 & 0 & 0 & * & 0 \\
   3 & 0 & 0 & 0 & 0 & 0 & * \\
  }
};
    \draw (1)--(-3)--(2);
    \draw (1)--(-2)--(3);
    \draw (2)--(-1)--(3);
    \addvmargin{1mm}
\end{tikzpicture}$$
\caption{Hasse diagram of $\mathcal{P}$ (left) and the matrix algebra  $\mathfrak{g}_C(\mathcal{P})$ (right)}\label{fig:tBCD}
\end{figure}
\end{Ex}

\begin{remark}
Theorem~\ref{CG} carries over mutatis mutandis to Lie poset algebras of types B, C, and D since such Lie poset algebras are also Lie semi-direct products.  
\end{remark}

\begin{remark}
In \textup{\textbf{\cite{Mayers}}}, Mayers has developed combinatorial index formulas for Lie poset algebras in types B, C, and D with certain height restrictions. From this, one can show that the poset $\mathcal{P}$ of Example~\ref{ex:typeBCD} corresponds to a Frobenius Lie poset algebra of types B, C, and D, but not type A. Note that $\Sigma(\mathcal{P})$ is homeomorphic to $S^1$, so $\Sigma(\mathcal{P})$ is not contractible. Furthermore, the corresponding Lie algebra is also not rigid; this resolves a question of Gerstenhaber and Giaquinto about the existence of such algebras \textup{\textbf{\cite{Prin}}}. Thus, contractability and rigidity seem to be necessary conditions for a poset to be Frobenius only in type A.
\end{remark}

%%%%%%%%%%%%%%%%%%%%%%%%%%%%%%%%%%%%%%%%%%%%%%%%%%%%%
%%%%%%%%%%%%%%%%%%%%%%%%%%%%%%%%%%%%%%%%%%%%%%%%%%%%%
\section{Epilogue}
%%%%%%%%%%%%%%%%%%%%%%%%%%%%%%%%%%%%%%%%%%%%%%%%%%%%%%%%
%%%%%%%%%%%%%%%%%%%%%%%%%%%%%%%%%%%%%%%%%%%%%%%%%%%%%%%%

As noted in the footnote in the Introduction,  Frobenius Lie algebras have important applications in physics. Recent work, by the current authors and others, has focused the search for Frobenius Lie algebras on the evocatively-named seaweed algebras introduced by Dergachev and A. Kirillov \textbf{\cite{DK}}.  Here, we have broadened the ``Frobenius search" inside the classical Lie algebras to include Lie poset algebras.  

Of course, to determine if a given Lie algebra is Frobenius requires a combinatorial mechanism for the computation of the index. For seaweed algebras, a successful approach has been to introduce the meander graph of a seaweed \textbf{\cite{Cameron, Coll3, Coll1, CHM, Coll2, DK, Elash,Panyushev2, Panyushev3}}.  The meander of a seaweed is an associated planar graph and the index of the seaweed can be computed by then counting the number and type of connected components of the meander \textbf{\cite{CHM,DK}}.\footnote{From these combinatorial index formulas all closed-form general closed-form index formulas where the index is given by a polynomial greatest common divisor formula in the sizes of the parts of the composition that define the seaweed, have recently been catalogued \textbf{\cite{Cameron, Coll2, Kar}}.} %closed-form index formulas have been developed from the compositions that define the seaweed \textbf{\cite{Cameron,CHM,DK,Elash}}. 

A prime motivation for this article is the parallel development of the requisite combinatorics to deliver closed-form index formulas for the index of Lie poset algebras.  These formulas result from an analysis of the chains of the poset defining the Lie poset algebra. A secondary motivation for our investigation is the observation that seaweed algebras and Lie poset algebras maintain similar \textit{spectral} properties which we now describe.

Let $\mathfrak{g}$ be a Frobenius Lie algebra with $F$ an associated index-realizing functional.\footnote{Index realizing, or \textit{regular}, functionals exist in abundance and are dense in $\mathfrak{g}$ and $\mathfrak{g^*}$ in both the Euclidean and Zariski topologies.} That is, $$\dim(\ker(F))=\ind(\mathfrak{g})=0.$$ In this case, the natural map $\mathfrak{g} \rightarrow \mathfrak{g}^*$ defined by $x \mapsto  F[x,-]$ is an isomorphism.  The image of $F$ under the inverse of this map is called a \textit{principal element} of $\mathfrak{g}$ and will be denoted $\widehat{F}$.  It is the unique element of $\mathfrak{g}$ such that 

$$
F\circ \widehat{F}= F([\widehat{F},-]) = F.  
$$
In \textbf{\cite{Ooms}}, Ooms established that the spectrum of the adjoint of a principal element of a Frobenius Lie algebra is independent of the principal element chosen to compute it (see also \textbf{\cite{G2}}, Theorem 3).  Generally, the eigenvalues of ad~$\widehat{F}$ can take on virtually any value (see \textbf{\cite{Diatta}} for examples). But, in their formal study of principal elements \textbf{\cite{G3}}, Gerstenhaber and Giaquinto showed that if $\mathfrak{g}$ is a Frobenius seaweed subalgebra of $\mathfrak{sl}(n)$, then the spectrum of the adjoint of a principal element of $\mathfrak{g}$ consists entirely of integers.\footnote{Joseph, seemingly unaware of the Type-A result of \textbf{{\cite{G3}}}, but using different methods, strongly extended this integrality result to all seaweed subalgebras of semisimple Lie algebras
\textbf{\cite{Joseph}.}} Subsequently, Coll et al. \textbf{\cite{unbroken}} 
showed that this spectrum must actually be an \textit{unbroken} sequence of integers centered at one half. Moreover, the dimensions of the associated eigenspaces are shown to form a symmetric distribution.  This is true more generally and we have the following theorem.

\begin{theorem}[Coll, et al. \textbf{\cite{specD,specAB,unbroken}}]\label{thm:main}
If $\mathfrak{g}$ is a Frobenius seaweed subalgebra of classical type, and $\widehat{F}$ is a principal element of $\mathfrak{g}$, then the spectrum of $\text{ad }\widehat{F}$ consists of an unbroken set of integers centered at one-half.  Moreover, the dimensions of the associated eigenspaces form a symmetric distribution. 
\end{theorem}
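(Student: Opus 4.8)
The plan is to reduce everything to the action of $\text{ad }\widehat{F}$ on root spaces and then to extract the three features --- symmetry about one-half, integrality, and unbrokenness --- from three separate inputs. First I would place $\widehat{F}$ inside the Cartan. Since a seaweed contains a Cartan subalgebra $\mathfrak{h}$ of the ambient classical Lie algebra, one may choose an index-realizing functional $F$ supported on dual root vectors; the principal-element equation $F([\widehat{F},-])=F$ then has its solution $\widehat{F}$ in $\mathfrak{h}$, and by Ooms's invariance the resulting spectrum is independent of this choice. With $\widehat{F}\in\mathfrak{h}$ the operator $\text{ad }\widehat{F}$ is semisimple, acting by the scalar $\alpha(\widehat{F})$ on each root space $\mathfrak{g}_{\alpha}\subseteq\mathfrak{g}$ and by $0$ on $\mathfrak{h}$. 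So the spectrum is $\{\alpha(\widehat{F}):\mathfrak{g}_{\alpha}\subseteq\mathfrak{g}\}\cup\{0\}$, and the whole problem becomes one about the numbers $\alpha(\widehat{F})$.

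The symmetry about one-half is, pleasingly, formal and type-independent. Since $\text{ad }\widehat{F}$ is a derivation and $F([\widehat{F},-])=F$, for eigenvectors $x\in\mathfrak{g}_{\lambda}$ and $y\in\mathfrak{g}_{\mu}$ one computes
$$
(\lambda+\mu)\,B_{F}(x,y)=B_{F}(\text{ad}\widehat{F}\,x,y)+B_{F}(x,\text{ad}\widehat{F}\,y)=F\big([\widehat{F},[x,y]]\big)=F([x,y])=B_{F}(x,y),
$$
so $B_{F}(x,y)\neq 0$ forces $\lambda+\mu=1$. Thus $B_{F}$ restricts to a pairing of the $\lambda$-eigenspace with the $(1-\lambda)$-eigenspace, and non-degeneracy of $B_{F}$ (the Frobenius hypothesis) gives $\dim\mathfrak{g}_{\lambda}=\dim\mathfrak{g}_{1-\lambda}$. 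This simultaneously proves that the spectrum is symmetric about $\tfrac{1}{2}$ and that the eigenspace dimensions form a symmetric distribution.

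The remaining two features are genuinely combinatorial and are where the meander enters. Writing $F=\sum_{\alpha\in S}E_{\alpha}^{*}$, the equation for $\widehat{F}$ reads $\alpha(\widehat{F})=1$ for every $\alpha\in S$; the index-zero condition, read off from the meander of the seaweed, should guarantee both that the roots in $S$ span the relevant subspace of $\mathfrak{h}^{*}$ and that every root $\beta$ occurring in $\mathfrak{g}$ is an \emph{integer} combination of the $\alpha\in S$. Evaluating at $\widehat{F}$ then gives $\beta(\widehat{F})\in\mathbb{Z}$, which is integrality. For unbrokenness I would trace the chain of relations $\alpha(\widehat{F})=1$ along the meander path characterizing a Frobenius seaweed, producing a string of consecutive integer values among the coordinates of $\widehat{F}$, and then verify that every integer between the extreme eigenvalues is itself realized by a root of $\mathfrak{g}$, so that the integer spectrum has no internal gap.

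The main obstacle is precisely this last step carried out uniformly across types B, C, and D. In those types the root system is a folding of type A, the meander acquires the extra ``flips'' coming from the symplectic or orthogonal involution, and one must check both that the Frobenius meander remains connected enough to force integrality and that no integer between the extreme eigenvalues is skipped. I expect the cleanest route is a type-by-type reduction that embeds the type-B/C/D computation into a type-A computation on $\{-n,\dots,-1,1,\dots,n\}$ compatible with the involution of Definition~\ref{def:bcd}, so that the type-A unbroken result can be transported while the central value $\tfrac{1}{2}$ is tracked through the folding.
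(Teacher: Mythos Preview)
The paper does not contain a proof of this theorem. Theorem~\ref{thm:main} is stated in the Epilogue as a result quoted from external references \textbf{\cite{specD,specAB,unbroken}}, with no argument given here; it serves only as context for the spectral question raised afterward about Lie poset algebras. Consequently there is no ``paper's own proof'' to compare your proposal against.

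On the substance of your sketch: the symmetry-about-one-half argument via $B_F(\mathfrak{g}_\lambda,\mathfrak{g}_\mu)=0$ unless $\lambda+\mu=1$ is correct and is exactly the standard Gerstenhaber--Giaquinto/Ooms computation, so that part stands on its own. Your outline for integrality and unbrokenness, by contrast, is only a plan: you correctly identify that these are the genuinely combinatorial steps requiring the meander machinery, and you are honest that the type-B/C/D case is the obstacle. Those are precisely the contributions of the cited papers, and the details (particularly the verification that no integer is skipped) are not short; your proposal does not supply them, nor does it need to, since the present paper does not either.
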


% $\mathfrak{g}$ is a subalgebra of $\mathfrak{so}(2n+1)$ or $\mathfrak{sp}(2n)$, respectively$)$

Remarkably, Theorem \ref{thm:main} is true for type-A Frobenius Lie poset algebras corresponding to posets of restricted height -- but the spectrum is 
 ``narrow", in the sense of the following Theorem.

\begin{theorem}[Mayers \textbf{\cite{Mayers}}]\label{thm:pspec}
If $\mathfrak{g}_A(\mathcal{P})$ is Frobenius, where $\mathcal{P}$ is of height two or less, then the spectrum of $\mathfrak{g}_A(\mathcal{P})$ consists of an equal number of 0's and 1's.
\end{theorem}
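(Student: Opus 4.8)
The plan is to compute the principal element $\widehat{F}$ explicitly and read off the spectrum of $\mathrm{ad}\,\widehat{F}$ from the root-space (weight-space) decomposition that the matrix representation of $\mathfrak{g}_A(\mathcal{P})$ carries. Recall that $\mathfrak{g}_A(\mathcal{P})$ has as basis the diagonal elements (inside $\mathfrak{sl}(n)$) together with the root vectors $E_{i,j}$ for $i\prec_{\mathcal{P}} j$. Since $\widehat{F}$ may be taken to lie in the Cartan subalgebra $\mathfrak{h}$ — a standard fact, as one can always choose an index-realizing functional supported on the ``generic'' covering relations so that the principal element is diagonal — write $\widehat{F}=\mathrm{diag}(d_1,\dots,d_n)$. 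Then $\mathrm{ad}\,\widehat{F}$ acts on $E_{i,j}$ by the eigenvalue $d_i-d_j$, and acts as $0$ on all of $\mathfrak{h}\cap\mathfrak{sl}(n)$. So the spectrum of $\mathrm{ad}\,\widehat{F}$ consists of $\dim\mathfrak{h}=|\mathcal{P}|-1$ copies of $0$, together with the multiset $\{d_i-d_j : i\prec_{\mathcal{P}} j\}$. The claim that the spectrum is ``an equal number of $0$'s and $1$'s'' therefore amounts to two assertions: (a) $d_i-d_j=1$ for every covering-type relation contributing a basis vector, i.e. every $E_{i,j}\in\mathfrak{g}_A(\mathcal{P})$ has $\mathrm{ad}\,\widehat{F}$-eigenvalue exactly $1$; and (b) the number of relations $i\prec_{\mathcal{P}} j$ equals $|\mathcal{P}|-1$, i.e. $|Rel(\mathcal{P})| = |\mathcal{P}|-1$.

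First I would dispose of the height-zero and height-one cases directly. In height zero $\mathfrak{g}_A(\mathcal{P})$ is abelian (Theorem~\ref{h0ind}'s setting), so it is Frobenius only when $|\mathcal{P}|=1$, a trivial spectrum; more usefully, in heights zero and one, Theorem~\ref{thm:h01FC} says Frobenius forces the Hasse diagram to be a tree, so $|Rel(\mathcal{P})|=|\mathcal{P}|-1$ on the nose (a tree on $|\mathcal{P}|$ vertices has $|\mathcal{P}|-1$ edges, and in height $\le 1$ every relation is a covering relation). That settles assertion (b) in those heights, and one checks (a) by exhibiting the explicit functional: take $F=\sum_{i\,\prec\, j}E_{i,j}^*$ summed over the edges of the tree; the resulting principal element has consecutive integer entries along each edge differing by exactly $1$, giving all root eigenvalues equal to $1$ and the remaining $|\mathcal{P}|-1$ eigenvalues equal to $0$.

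For height two I would use the combinatorial classification of Theorem~\ref{thm:npurechar} (and Theorem~\ref{thm:h2frobchar} in the pure case). The key counting input is that a Frobenius height-two poset $\mathcal{P}$ must satisfy $|Rel(\mathcal{P})| = |\mathcal{P}|-1$: this follows from the index formula~(\ref{height2}), since $\mathrm{ind}(\mathfrak{g}_A(\mathcal{P}))=0$ forces $Rel_E(\mathcal{P}) - |\mathcal{P}| + 1 + \sum_{j\notin Ext(\mathcal{P})}UD(\mathcal{P},j)=0$, and a careful bookkeeping — using that each rank-one element $j$ contributes a star $\mathcal{P}(n_j,1,m_j)$ with, by Lemma~\ref{lem:onlystar}, $\{n_j,m_j\}=\{1,2\}$, hence $3$ relations per rank-one element, glued along extremal elements per the admissible gluing rules $A_1,A_2,C,D_1,D_2,F$ — yields exactly $|Rel(\mathcal{P})|=|\mathcal{P}|-1$. (Equivalently: contract each pure component to a point; by Theorem~\ref{thm:npurechar}(iv) the result is a tree, each pure component itself satisfies the edge-count $|Rel|=|\text{vertices}|-1$ by the star-gluing bookkeeping, and the counts add up telescopically.) Then I would construct the index-realizing functional $F$ so that $\widehat{F}$ is diagonal with entries in $\{0,1\}$: rank-zero (minimal) elements get diagonal entry $0$, rank-one and rank-two elements get diagonal entry $1$, adjusted to lie in $\mathfrak{sl}(n)$ by subtracting a fixed scalar. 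One verifies this $F$ is index-realizing by checking $B_F$ is nondegenerate on the span of the root vectors (which is immediate since each $E_{i,j}$ pairs with the diagonal via $F([\,\cdot\,,E_{i,j}])$), and that $F\circ\widehat{F}=F$. With $\widehat{F}$ so normalized, every $E_{i,j}\in\mathfrak{g}_A(\mathcal{P})$ joins a rank-$0$ or rank-$1$ element below to a rank-$1$ or rank-$2$ element above — but the only relations present are rank-$0$ to rank-$1$, rank-$0$ to rank-$2$, or rank-$1$ to rank-$2$; in the first and third cases the eigenvalue is $1-0=1$ or $1-1=0$... so here is the subtlety.

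The main obstacle is exactly that last point: a height-two poset has relations spanning two ranks (minimal to maximal), on which a naive $\{0,1\}$-valued diagonal would give eigenvalue $1-1$ or $1-0$ inconsistently — so the principal element of a Frobenius height-two poset \emph{cannot} simply assign value $=\mathrm{rank}$. The correct move, which is where the real work lies, is to show that for a \emph{Frobenius} height-two poset the admissible gluing rules forbid the ``bad'' configurations: by condition (ii) of Theorem~\ref{thm:npurechar} there are no covering relations between a minimal and a maximal element \emph{inside} a pure component, and the inter-component covering relations of type $Ext$-to-$Ext$ are tree-like; one then argues that the defining functional can be chosen supported only on a spanning set of $|\mathcal{P}|-1$ relations whose associated principal element is forced (by the equations $F\circ\widehat F=F$ propagated along the Hasse diagram) to take exactly two values differing by $1$, which we normalize to $0$ and $1$, and that the multiplicities split evenly. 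Concretely, I expect the cleanest route is: (1) prove $|Rel(\mathcal{P})|=|\mathcal{P}|-1$ from the index formula as above; (2) observe that a diagonal principal element $\widehat{F}=(d_i)$ satisfies $d_i-d_j=1$ for \emph{every} $i\prec j$ with $E_{i,j}\in\mathfrak{g}$ — this is a structural fact about principal elements of Frobenius Lie poset algebras, since $F=\sum_{\text{edges}}c_{ij}E_{ij}^*$ with all $c_{ij}\ne 0$ forces, via $F([\widehat F, E_{ij}])=c_{ij}$, that $(d_i-d_j)c_{ij}=c_{ij}$; (3) conclude the nonzero eigenvalues of $\mathrm{ad}\,\widehat F$ are all $=1$ with multiplicity $|Rel(\mathcal{P})|=|\mathcal{P}|-1$, while the zero eigenvalue has multiplicity $\dim\mathfrak h=|\mathcal{P}|-1$; hence an equal number of $0$'s and $1$'s. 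Step (2) is the part that requires care about whether $\widehat F$ is genuinely diagonal and whether \emph{all} structure constants $c_{ij}$ can be taken nonzero simultaneously, and reconciling this with the two-rank-jump relations is the crux — it works precisely because the Frobenius condition, via the classification, constrains $\mathcal{P}$ so tightly that the linear system for $(d_i)$ is consistent with the uniform value $d_i-d_j=1$.
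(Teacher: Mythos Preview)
The paper does not actually prove Theorem~\ref{thm:pspec}; it is stated with a citation to \textbf{\cite{Mayers}} and no argument is given. So there is no proof in the paper to compare against, and your proposal must stand or fall on its own.

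Your approach is sound for heights zero and one, but it breaks down in height two. Your final ``cleanest route'' rests on two claims: (b) that $|Rel(\mathcal{P})| = |\mathcal{P}| - 1$, and (a) that $d_i - d_j = 1$ for \emph{every} $i \prec_{\mathcal{P}} j$. Both are false for height-two Frobenius posets, and (a) is in fact impossible whenever there is a chain of length two: if $i \prec k \prec j$, then $d_i - d_j = (d_i - d_k) + (d_k - d_j)$, so the three eigenvalues attached to $E_{i,k}$, $E_{k,j}$, $E_{i,j}$ cannot all equal $1$. Your justification for (a) --- that a Frobenius functional $F=\sum c_{ij}E_{ij}^*$ has all $c_{ij}\neq 0$, forcing $(d_i-d_j)c_{ij}=c_{ij}$ --- is where the argument fails: a Frobenius functional need not be supported on every root vector, and the identity $F([\widehat F,E_{ij}])=F(E_{ij})$ only yields $(d_i-d_j-1)\,F(E_{ij})=0$, which says nothing when $F(E_{ij})=0$.

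The paper hands you the counterexample. For $\mathcal{P}(2,1,1)$ (elements $1,2\prec 3\prec 4$) it records $\widehat{F}=\mathrm{diag}(\tfrac12,\tfrac12,-\tfrac12,-\tfrac12)$ with spectrum $\{0,0,0,0,1,1,1,1\}$. Here $|Rel(\mathcal{P})|=5$ while $|\mathcal{P}|-1=3$, so (b) fails; and $E_{3,4}$ has eigenvalue $d_3-d_4=0$, so (a) fails. The three Cartan zeros together with this one root-vector zero give four $0$'s, and the remaining four root vectors $E_{1,3},E_{2,3},E_{1,4},E_{2,4}$ give four $1$'s. The actual mechanism is therefore not ``all relations give eigenvalue $1$'' but rather: the diagonal entries of $\widehat F$ take exactly two values differing by $1$, partitioning $\mathcal{P}$ into two blocks, and one must count relations \emph{within} a block (eigenvalue $0$) versus \emph{across} blocks (eigenvalue $1$) and show these counts, together with $\dim\mathfrak h$, balance. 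Your proposal never arrives at this two-block picture --- the attempt to assign $d_i$ by rank cannot work, since rank-one and rank-two elements of $\mathcal{P}(2,1,1)$ receive the \emph{same} value --- and the counting argument that makes the balance come out is missing entirely.
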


%Aside from the combinatorial interest in obtaining.  However, seaweed algebras in type A -- and likely more generally --  are cohomologically inert) allowing for no deformation of the algebra and therefore no evolution of the spectrum.  

\begin{remark}  The unbroken, symmetric spectrum result of Theorem \ref{thm:main} does not characterize seaweeds. For example, consider the Frobenius poset $\mathcal{P}(2,1,1)$.  Note that $\mathfrak{g}_A(\mathcal{P}(2,1,1))$ has rank three, dimension eight, principal element $\widehat{F}=\rm{diag}\left(\frac{1}{2},\frac{1}{2},-\frac{1}{2},-\frac{1}{2}\right)$, and spectrum given by the multiset $\{0,0,0,0,1,1,1,1\}$.  It is not a seaweed \textup(see \textup{\textbf{\cite{unbroken}}}, Example 18\textup).  We ask the following questions:
\begin{enumerate}[label=\textup(\roman*\textup)]
    \item Is Theorem \ref{thm:pspec} true in type A for heights greater than two?  What about types B, C, and D?
    \item Does the narrow spectrum of a Frobenius subalgebra of classical type characterize Lie poset algebras?
    \item Can the unbroken spectrum of a Lie subalgebra of classical type ``evolve" under the deformation of the algebra.  Note that by a recent result of Elashvilli \textup{\textbf{\cite{Elash2}}}, this question is moot for type-A seaweeds -- since they are cohomologically inert. What about types B, C, and D?
\item What is it about the geometry of the underlying algebraic group that accounts for the unbroken spectrum?

\end{enumerate}

\end{remark}

%%%%%%%%%%%%%%%%%%%%%%%%%%%%%%%%%%%%%%%%%
%%%%%%%%%%%%%%%%%%%%%%%%%%%%%%%%%%%%%%%%%
\section{Appendix A - a matrix reduction algorithm}
%%%%%%%%%%%%%%%%%%%%%%%%%%%%%%%%%%%%%%%%%
In this section, we provide an algorithm for transforming $C(\mathfrak{g}(\mathcal{P}))$ into the equivalent matrix $C'(\mathfrak{g}(\mathcal{P}))$ for a poset algebra $\mathfrak{g}(\mathcal{P})$ corresponding to a connected poset $\mathcal{P}$. To describe the algorithm, it is necessary to partition the non-minimal elements of $\mathcal{P}$ into subsets $M_j$ for each minimal element $j\in \mathcal{P}$. Define $M_j$ for each minimal element $j\in \mathcal{P}$ by $m_j\in M_j$ if and only if $j$ is the least minimal element, with respect to the natural ordering on $\mathbb{Z}$, of $\mathcal{P}$ satisfying $j\preceq_{\mathcal{P}}m_j$. Throughout this section, we will assume that the minimal elements of $\mathcal{P}$, say $\{1,\hdots,n\}$, satisfy $j\preceq_{\mathcal{P}} m$ for some $m\in M_{j-1}$ for all $j=1,\hdots,n$. Note that this can always be arranged since, as stated in the preliminaries, $\mathfrak{g}(\mathcal{P})$ is invariant under the choice of linear extension of $\mathcal{P}$.

First, order the rows of $C'(\mathfrak{g}(\mathcal{P}))$ according to the following rubric:   
\begin{enumerate}[label={(\bfseries R\arabic*)}]
    \item $\sum \mathbf{E_{i,i}}$ followed by the rows of the form $\mathbf{E_{i,m_j}}$ for $i>1$ minimal, $m_j\in M_j$ maximal for $i\neq j$ such that $m_j$ is not maximal in $\mathbb{Z}$ with this property in the lexicographic ordering of the subscripts $(i,m_j)$ in $\mathbb{Z}\times\mathbb{Z}$;
    \item $\mathbf{E_{i,m_j}}$ for $i>1$ minimal, $m_j\notin M_i$ maximal and $m_j$ maximal with this property in $\mathbb{Z}$ listed in increasing order of $i$ in $\mathbb{Z}$;
    \item $\mathbf{E_{i,m_i}}$ for $i$ minimal and $m_i\in M_i$ in increasing order of $m_i$ in $\mathbb{Z}$;
    \item $\mathbf{E_{m_i,m_i}}$ for $m_i\in M_i$ maximal in increasing order of $m_i$ in $\mathbb{Z}$;
    \item $\mathbf{E_{i,i}}$ for $i>1$ minimal in $\mathcal{P}$ listed in increasing order of $i$ in $\mathbb{Z}$;
    \item finally, rows corresponding to $\mathbf{E_{i,m_j}}$, $\mathbf{E_{m_j,m_j}}$, and $\mathbf{E_{m_j,k}}$ for $i>1$ minimal in $\mathcal{P}$, $m_j\notin M_i$, $m_j\notin Ext(\mathcal{P})$ listed so that the subscripts are in increasing lexicographic order in $\mathbb{Z}\times\mathbb{Z}$ for each fixed $m_j$ and so that these groups occur in increasing order of $m_j$ in $\mathbb{Z}$.
\end{enumerate}
Now, order the columns as follows
\begin{enumerate}[label={(\bfseries C\arabic*)}]
    \item $\sum \mathbf{E_{i,i}}$ followed by $\mathbf{E_{i,i}}$ listed from $i=2$ to $i=|\mathcal{P}|$;
    \item $\mathbf{E_{i,m_i}}$ for $i$ minimal, $m_i\in M_i$ maximal listed in increasing order of $m_i$ in $\mathbb{Z}$;
    \item $\mathbf{E_{i,m_j}}$ for $i$ minimal, $m_j\notin M_i$ maximal, and  $m_j$ minimal in $\mathbb{Z}$ with this property listed in increasing order of $i$ in $\mathbb{Z}$;
    \item $\mathbf{E_{i,m_k}}$ for $i$ minimal and $m_k$ maximal for $k\neq i$, excluding $k=m_j$ described in \textbf{(C3)}, listed in lexicographic order of the subscripts in $\mathbb{Z}\times\mathbb{Z}$;
    \item finally, $\mathbf{E_{i,m_j}}$ as well as $\mathbf{E_{m_j,k}}$ for $i$ minimal and $m_j\notin Ext(\mathcal{P})$ listed in increasing lexicographic ordering of the subscripts in $\mathbb{Z}\times\mathbb{Z}$ for each fixed $m_j$ in increasing order of $m_j$ in $\mathbb{Z}$.
\end{enumerate}

\noindent
Next, with the rows and columns ordered as described above, we will perform a sequence of row operations. Assuming, as stated above, that the minimal elements of $\mathcal{P}$ are $\{1,\hdots,n\}$, perform the following row operations working from $j=1$ up to $n$.
\begin{enumerate}
    \item perform $\mathbf{E_{i,m_j}}-\frac{E_{i,m_j}}{E_{j,m_j}}\mathbf{E_{j,m_j}}$ at row $\mathbf{E_{i,m_j}}$ for $i$ minimal and $m_j\notin M_i$ such that $m_j\in M_j$;
    \item perform $\mathbf{E_{j+1,k}}-\frac{E_{j+1,k}}{E_{j+1,m_t}}\mathbf{E_{j+1,m_t}}$ at row $\mathbf{E_{j+1,k}}$ for $k\neq m_t$ and $m_t\notin M_i$ such that $m_t\in M_t$ for $1\le t\le j$ is maximal in $\mathcal{P}$ and $m_t$ is maximal in $\mathbb{Z}$ with this property;
    \item multiply row $\mathbf{E_{j,m_j}}$ by $\frac{1}{E_{j,m_j}}$ for $m_j\in M_j$; and
    \item multiply row $\mathbf{E_{j,m_t}}$ by $-\frac{1}{E_{j,m_t}}$ for $m_t\notin M_j$ such that $m_t\in M_t$ for $1\le t\le j$ is maximal in $\mathcal{P}$ and $m_t$ maximal in $\mathbb{Z}$ with this property.
\end{enumerate}

\noindent
Finally, perform the following row operations
\begin{enumerate}
    \item perform $\mathbf{E_{m_i,m_j}}+\frac{E_{m_i,m_j}}{E_{i,m_i}}\mathbf{E_{i,m_i}}-\frac{E_{m_i,m_j}}{E_{j,m_j}}\mathbf{E_{j,m_j}}$ at row $\mathbf{E_{m_i,m_j}}$ for $m_i\in M_i$ and $m_j\in M_j$;
    \item multiply row $\mathbf{E_{i,i}}$ by $-\frac{1}{E_{i,i}}$ for $i$ maximal in $\mathcal{P}$;
    \item multiply row $\mathbf{E_{i,i}}$ by $\frac{1}{E_{i,m_j}}$ for $i$ minimal in $\mathcal{P}$ and $m_j\notin M_i$ is the maximal element of $\mathcal{P}$ which is minimal in $\mathbb{Z}$ with this property;
\end{enumerate}

\begin{remark}
Applying the above algorithm to transform $C(\mathfrak{g}(\mathcal{P}))$ into the equivalent matrix $C'(\mathfrak{g}(\mathcal{P}))$ we have the following
\begin{itemize}
    \item row $\mathbf{E_{i,k}}$ of $C'(\mathfrak{g}(\mathcal{P}))$ for $i>1$ minimal, $k\in \mathcal{P}$ maximal in $\mathcal{P}$, $k$ not maximal in $\mathbb{Z}$ satisfying $k\notin M_i$, and $i\preceq_{\mathcal{P}}k$ is a zero row.
    \item row $\mathbf{E_{i,m_j}}$ of $C'(\mathfrak{g}(\mathcal{P}))$ for $i>1$ minimal, $m_j\in \mathcal{P}$ maximal in $\mathbb{Z}$ satisfying $m_j\notin M_i$ and $i\preceq_{\mathcal{P}}m_j$ is the unique row with a nonzero entry in column $\mathbf{E_{i,i}}$;
    \item row $\mathbf{E_{i,m_i}}$ of $C'(\mathfrak{g}(\mathcal{P}))$ for $i$ minimal and $m_i\in M_i$ is the unique row with a nonzero entry in column $\mathbf{E_{m_i,m_i}}$;
    \item row $\mathbf{E_{m_i,m_i}}$ of $C'(\mathfrak{g}(\mathcal{P}))$ for $m_i\in M_i$ maximal in $\mathcal{P}$ is the unique row with a nonzero entry in column $\mathbf{E_{i,m_i}}$;
    \item row $\mathbf{E_{i,i}}$ of $C'(\mathfrak{g}(\mathcal{P}))$ for $1<i\in \mathcal{P}$ minimal is linearly independent from the rest as by connectivity there must exist $m_j$ for $i\neq j$ such that $i\preceq_{\mathcal{P}}m_j$, i.e., row $\mathbf{E_{i,i}}$ has a nonzero entry in column $\mathbf{E_{i,m_j}}$. The only other row with a nonzero entry in this column is $\mathbf{E_{m_j,m_j}}$ which is also the unique row with nonzero entry in column $\mathbf{E_{j,m_j}}$;
\end{itemize}
\end{remark}
%%%%%%%%%%%%%%%%%%%%%%%%%%%%%%%%%%%%%%%%%
%%%%%%%%%%%%%%%%%%%%%%%%%%%%%%%%%%%%%%%%%

%%%%%%%%%%%%%%%%%%%%%%%%%%%%%%%%%%%%%%%%%%%%%%%%%%%%%%%%
%%%%%%%%%%%%%%%%%%%%%%%%%%%%%%%%%%%%%%%%%%%%%%%%%%%%%%%%
\section{Appendix B - Index of $\mathcal{P}(n,1,m)$}
%%%%%%%%%%%%%%%%%%%%%%%%%%%%%%%%%%%%%%%%%%%%%%%%%%%%%%%%
%%%%%%%%%%%%%%%%%%%%%%%%%%%%%%%%%%%%%%%%%%%%%%%%%%%%%%%%

In this appendix, we develop index formulas for Lie posets algebras of the form $\mathfrak{g}_A(\mathcal{P}(n,1,m))$.  We will perform a standard ``squeeze-play" by finding an upper bound for the index, then a lower bound and then showing that the two match. For the upper bound, we make a judicious choice of functional $F$.  It follows from the original definition for the index of a Lie algebra $\mathfrak{g}$, i.e., $\min_{F\in \mathfrak{g}}\dim\ker(B_F)$, that an arbitrary $F\in\mathfrak{g}^*$ satisfies $\dim\ker(B_F)\ge\ind(\mathfrak{g})$. As for determining a lower bound, we make use of a relationship between matchings on graphs and the rank of skew-symmetric matrices which is descibed and utilized in Section~\ref{sec:lb}.

\subsection{Upper Bounds}\label{sec:ub}

In this subsection, we determine upper bounds on the index of $\mathfrak{g}_A(\mathcal{P}(n,1,m))$. Throughout this section let $E^*_{i,j}$ denote the functional which returns the $i,j$-entry of a matrix. The heuristic for upper bound proofs using functionals works as follows: given a functional $F$ on a Lie algebra $\mathfrak{g}$ with basis $\{x_1,\hdots,x_n\}$
\begin{enumerate}
    \item Let $B\in\mathfrak{g}\cap\ker(F)$;
    \item determine the restrictions $F([x_i,B])=0$ places on the entries of $B$ for each basis element $x_i$ of $\mathfrak{g}$;
    \item solve the resulting system of equations to determine $\dim\ker(B_F)\ge\ind(\mathfrak{g})$.
\end{enumerate} 
\noindent
We will work in $\mathfrak{gl}(n)$ to determine an upper bound on the index of $\mathfrak{g}(\mathcal{P})$ and then subtract one to determine the corresponding upper bound on the index of $\mathfrak{g}_A(\mathcal{P})$. Performing calculations in $\mathfrak{gl}(n)$ allows the use of the basis consisting of $E_{i,j}$ for $i,j\in\mathcal{P}$ and $i\preceq j$ as well as $E_{i,i}$ for $i\in\mathcal{P}$.

\begin{lemma}\label{lem:ubnn}
If $n\in\mathbb{Z}_{>0}$, then $\ind\mathfrak{g}_A(\mathcal{P}(n, 1, n))\le n^2-2n+2$.
\end{lemma}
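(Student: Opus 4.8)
The plan is to follow exactly the heuristic laid out just above the statement: exhibit a single explicit functional $F$ on $\mathfrak{g}(\mathcal{P}(n,1,n))$, work in $\mathfrak{gl}(n)$ (so $|\mathcal{P}|=2n+1$ and the basis is $E_{i,j}$ for $i\preceq j$ together with the diagonal $E_{i,i}$), compute $\dim\ker(B_F)$ for that $F$, and then subtract $1$ to pass from $\mathfrak{g}(\mathcal{P})$ to $\mathfrak{g}_A(\mathcal{P})$. Label the $n$ minimal elements, the unique rank-one element, and the $n$ maximal elements; the nonzero off-diagonal relations are: each minimal $\preceq$ the rank-one element, the rank-one element $\preceq$ each maximal, and each minimal $\preceq$ each maximal. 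The natural candidate is a functional supported on the ``long" relations together with just enough of the short ones to pin everything down — concretely something like $F=\sum_{i=1}^{n}E^*_{\text{min}_i,\text{mid}}+\sum_{j=1}^{n}E^*_{\text{mid},\text{max}_j}+\sum_{i=1}^{n}E^*_{\text{min}_i,\text{max}_i}$ (a ``diagonal" selection among the minimal--maximal relations), which is the poset analogue of a Belavin--Drinfel'd-type functional. One should check this is the right shape by testing it on small $n$ first.

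First I would write $B=\sum b_{ij}E_{ij}$ (over all basis elements, including diagonal) as a generic element of $\ker(F)$, i.e.\ imposing $F(B)=0$. Then, for each basis element $x$ of $\mathfrak{g}(\mathcal{P})$, I would expand $[x,B]$ using $[E_{ij},E_{kl}]=\delta_{jk}E_{il}-\delta_{li}E_{kj}$, apply $F$, and record the resulting linear equation on the $b_{ij}$. The equations coming from the diagonal generators $E_{ii}$ force the coefficients of most off-diagonal basis vectors to vanish (since $F$ pairs them against the fixed selected relations), and the equations coming from $E_{\text{min}_i,\text{mid}}$, $E_{\text{mid},\text{max}_j}$, and $E_{\text{min}_i,\text{max}_j}$ propagate constraints among the diagonal entries $b_{ii}$ and the surviving off-diagonal entries. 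Solving this system, I expect the solution space to have dimension exactly $n^2-2n+3$ in $\mathfrak{gl}(n)$ — the $n^2$ coming from the ``free" minimal--maximal block after the diagonal-selection constraints are removed, corrected by the linear relations among the $b_{ii}$'s and the scalar matrix — so that subtracting $1$ gives $\dim\ker(B_F)=n^2-2n+2$ in $\mathfrak{sl}(n)$, hence $\ind\mathfrak{g}_A(\mathcal{P}(n,1,n))\le n^2-2n+2$ by Theorem~\ref{thm:commat} (equivalently the Kirillov-form definition).

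The main obstacle is bookkeeping: making sure the chosen $F$ is genuinely ``efficient," i.e.\ that the rank of $B_F$ is as large as possible rather than accidentally degenerate, and then correctly counting the dimension of the solution space — in particular tracking how the $2n$ short relations and the $n^2$ long relations interact through the $2n+1$ diagonal generators, and not double-counting the scalar matrix $\sum E_{ii}$ that lies in the center of $\mathfrak{g}(\mathcal{P})$ and accounts for the $-1$ correction. I would organize the count by splitting the $b_{ij}$ into the diagonal block, the two ``star" blocks (minimal--mid and mid--maximal), and the minimal--maximal block, resolving each block's constraints in that order, and I would sanity-check the final count against the known values for $n=1$ and $n=2$ (where $n^2-2n+2$ gives $1$ and $2$ respectively).
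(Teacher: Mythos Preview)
Your overall strategy matches the paper's exactly: pick an explicit $F$, work in $\mathfrak{g}(\mathcal{P})\subset\mathfrak{gl}(2n+1)$, compute $\dim\ker(B_F)$ by running through $F([x,B])=0$ for each basis element $x$, then subtract $1$ to pass to $\mathfrak{g}_A$. The gap is in the choice of $F$ itself.

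Your proposed functional
\[
F=\sum_{i}E^*_{\min_i,\mathrm{mid}}+\sum_{j}E^*_{\mathrm{mid},\max_j}+\sum_{i}E^*_{\min_i,\max_i}
\]
does \emph{not} realize the bound $n^2-2n+2$. Carry out your own sanity check at $n=2$ (labels $1,2$ minimal, $3$ middle, $4,5$ maximal, so $F=E^*_{13}+E^*_{23}+E^*_{34}+E^*_{35}+E^*_{14}+E^*_{25}$). Solving the system $F([x,B])=0$ leaves $d_1,d_2,d_3$ free among the diagonals (with $d_4=d_1$, $d_5=d_2$), forces $a_i=b_i=d_i-d_3$ and $c_{ii}=-(d_i-d_3)$, but leaves $c_{12}$ and $c_{21}$ completely unconstrained, since $E_{15}$ and $E_{24}$ yield no equations. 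So $\dim\ker(B_F)=5$ in $\mathfrak{g}$ and the resulting upper bound on $\ind\mathfrak{g}_A$ is $4$, not $2$. The issue is that your $F$ is too symmetric in both the minimal and maximal blocks: the ``diagonal'' selection $E^*_{\min_i,\max_i}$ imposes no relation involving the off-diagonal coefficients $c_{ij}$ with $i\neq j$, so roughly $(n-1)^2$ of those survive together with extra diagonal freedom, and the kernel is larger than needed for every $n\ge 2$.

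The paper's functional breaks this symmetry by dropping the short (min--mid and mid--max) relations entirely and instead singling out one minimal element to pair with \emph{all} maximals:
\[
F=\sum_{i=1}^{n}E^*_{1,\,n+1+i}+\sum_{i=2}^{n}E^*_{i,\,n+i}.
\]
With this $F$, the equations from $x=E_{1,n+1+i}$ force all maximal diagonals to equal $d_1$, and the equations from the remaining $E_{i,n+i}$ pin down the other minimal diagonals, collapsing the diagonal block to two free parameters ($d_1$ and $d_{n+1}$) and killing all $E_{i,n+1}$, $E_{n+1,j}$, and the $E_{1,\ast}$ and $E_{\ast,n+i}$ pieces of the min--max block. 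What survives is exactly the $(n-1)^2$ off-diagonal entries $E^*_{i,n+j}(B)$ with $1<i\neq j-1\le n$, giving $\dim\ker(B_F)=(n-1)^2+2=n^2-2n+3$.

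A small side remark: when you write ``a generic element of $\ker(F)$, i.e.\ imposing $F(B)=0$,'' that is not what you want. The condition is $B\in\ker(B_F)$, i.e.\ $F([x,B])=0$ for all $x$, which is what you actually describe in the next sentence; $F(B)=0$ itself is a different (single, and here irrelevant) linear condition.
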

\begin{proof}
Let $$F=\sum_{i=1}^nE^*_{1, n+1+i}+\sum_{i=2}^nE^*_{i, n+i}$$ and assume that $B\in\mathfrak{g}_A(\mathcal{P}(n,1,n))\cap\ker(B_F)$. The restrictions on the entries of $B$ imposed by basis elements of $\mathfrak{g}(\mathcal{P}(n,1,n))$ break into seven cases.

\bigskip
\noindent 
\textbf{Case 1:} $E_{1, n+1+i}$ for $1\le i\le n$ and $E_{j, n+j}$ for $1<j\le n$.
These basis elements contribute the conditions $E^*_{1,1}(B)=E^*_{n+1+i, n+1+i}(B)$ as well as $E^*_{j,j}(B)=E^*_{n+j, n+j}(B)$.

\bigskip
\noindent 
\textbf{Case 2:} $E_{i, n+1}$ for $1\le i<n+1$. These basis elements contribute the conditions $\sum_{i=1}^nE^*_{n+1, n+1+i}(B)=0$ as well as $E^*_{n+1, n+i}(B)=0$.

\bigskip
\noindent 
\textbf{Case 3:} $E_{n+1, n+1+i}$ for $1\le i\le n$. These basis elements contribute the conditions $E^*_{1, n+1}(B)=-E^*_{i+1, n+1}(B)$ as well as $E^*_{1, n+1}(B)=0$.

\bigskip
\noindent 
\textbf{Case 4:} $E_{1,1}$. This basis element forces the condition $\sum_{i=1}^nE^*_{1, n+1+i}(B)=0$.

\bigskip
\noindent 
 \textbf{Case 5:} $E_{i,i}$ for $1<i\le n+1$. For $i<n+1$ the basis elements force the condition $E^*_{i, n+i}(B)=0$ while $i=n+1$ contributes nothing.

\bigskip
\noindent 
 \textbf{Case 6:} $E_{i,i}$ for $n+1<i=n+j<2n+1$. These basis elements contribute the conditions $E^*_{1, n+j}(B)+E^*_{j, n+j}(B)=0$.

\bigskip
\noindent
\textbf{Case 7:} $E_{2n+1, 2n+1}$. This basis element forces the condition $E^*_{1, 2n+1}(B)=0$.
\bigskip

\noindent
Now, we find that Cases 5, 6 and part of Case 3 tell us that $E^*_{1, i}(B)=0$ for $n+1\le i\le 2n+1$ as well as $E^*_{i, n+i}(B)=0$ for $1<i\le n$. Case 3 all together gives $E^*_{i, n+1}(B)=0$ for $1\le i\le n$. Case 2 all together gives $E^*_{n+1, i}(B)=0$ for $n+1<i\le 2n+1$. Finally, Case 1 allows us to conclude that $E^*_{1,1}(B)=E^*_{i, i}(B)$ for $i\neq n+1$ and thus along with $E^*_{n+1, n+1}(B)$ we get a contribution of two to $\dim(\ker(B_F))$. So the question remaining is: How many entries of $B$ are unrestricted? These correspond to $E^*_{i, n+j}(B)$ for $1<j\neq i\le n$; that is, $(n-1)^2$ entries. Thus, $\dim(\ker(B_F))$ is equal to

$$(n-1)^2+2=n^2-2n+3=n(n-2)+3,$$ 

\noindent
which gives an upper bound on $\ind(\mathfrak{g}(\mathcal{P}(n,1,n)))$. The result follows by restricting to $\mathfrak{sl}(2n+1)$.
\end{proof}

\begin{lemma}\label{lem:ubmn}
If $n,m\in\mathbb{Z}_{>0}$ satisfying $m<n$, then $\ind\mathfrak{g}_A(\mathcal{P}(m, 1, n))\le n(m-2)$.
\end{lemma}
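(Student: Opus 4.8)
The plan is to rerun the functional-based ``squeeze-play'' used for Lemma~\ref{lem:ubnn}: exhibit a single functional $F\in\mathfrak{g}_A(\mathcal{P}(m,1,n))^{*}$ for which $\ker(B_F)$ can be computed outright, check that $\dim\ker(B_F)$ equals the asserted value once one passes from $\mathfrak{gl}$ to $\mathfrak{sl}$, and conclude via $\dim\ker(B_F)\ge\ind$. Label the $m$ rank-zero elements of $\mathcal{P}(m,1,n)$ by $1,\dots,m$, the unique rank-one element by $m+1$, and the $n$ rank-two elements by $m+2,\dots,m+n+1$; I would work in $\mathfrak{gl}(m+n+1)$ with the basis $\{E_{i,i}:i\in\mathcal{P}\}\cup\{E_{i,m+1}\}\cup\{E_{i,m+1+j}\}\cup\{E_{m+1,m+1+j}\}$ and subtract $1$ at the end for the restriction to $\mathfrak{sl}(m+n+1)$, the lost dimension being spanned by the identity, which lies in $\ker(B_F)$ because $F$ is supported on strictly upper-triangular coordinates. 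Using $\mathfrak{g}(\mathcal{P})\cong\mathfrak{g}(\mathcal{P}^{*})$ and $\mathcal{P}(m,1,n)^{*}=\mathcal{P}(n,1,m)$, one may first reduce to the case in which the rank-zero antichain is the larger of the two; there I would choose $F$ in the spirit of Lemma~\ref{lem:ubnn} but with a maximal element playing the role of the ``hub'' joined to every minimal element, the remaining cross-relations used to form a near-matching between the two antichains, and the relations $E_{i,m+1}$ from the minimal elements the matching misses up to the rank-one element thrown in to absorb the surplus. Concretely $F$ should be a sum $\sum E^{*}_{a,b}$ of coordinate functionals supported on exactly such a set of relations, arranged so that no two summands lie in a common row or column except through the hub -- precisely the feature that makes the resulting linear system solvable.

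Next I would take $B\in\mathfrak{g}_A(\mathcal{P}(m,1,n))\cap\ker(B_F)$ and, just as in the seven-case analysis of Lemma~\ref{lem:ubnn}, run through the basis elements $x$ of $\mathfrak{g}(\mathcal{P}(m,1,n))$, reading off the linear condition each imposes via $F([x,B])=0$. The diagonal elements $E_{i,i}$ will tie together the diagonal entries of $B$ sitting in rows or columns touched by the hub or the near-matching and will force the corresponding off-diagonal entries of $B$ to vanish; the relation elements $E_{i,m+1}$ and $E_{m+1,m+1+j}$ through the rank-one element will annihilate the remaining entries of $B$ in the rank-one row and column; and the cross-relation elements $E_{i,m+1+j}$ not appearing in the support of $F$ impose no constraint on the ``generic'' entries $E^{*}_{i,m+1+j}(B)$, namely those with neither index equal to the hub and $(i,m+1+j)$ not a matched pair. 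Solving the system, the diagonal part collapses to a fixed small number of free parameters as in Lemma~\ref{lem:ubnn}, every ``covered'' relation entry of $B$ is forced to zero, and the free part of $B$ is exactly that generic block of cross-relation entries; tallying these gives $\dim\ker(B_F)$ in $\mathfrak{gl}(m+n+1)$, and subtracting one yields the stated upper bound.

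The main obstacle is choosing $F$ so that the bound comes out sharp. Because the two antichains have different sizes the functional cannot be symmetric, and there is genuine freedom -- hence genuine danger -- in how many of the surplus minimal elements are attached to the hub, to the rank-one element, or to the near-matching; an ill-chosen $F$ leaves the linear system with the wrong number of surviving free parameters and misses the intended value. One must therefore check two things carefully: that every entry declared ``covered'' is in fact forced, i.e.\ there is no hidden linear dependence among the equations coming from the support of $F$, and that every ``generic'' entry is genuinely unconstrained by all basis elements, including the relation basis elements that do lie in the support of $F$. The entry count and the verification that no equation is redundant are the bulk of the work; I expect the argument to parallel Lemma~\ref{lem:ubnn} essentially step for step, with a longer but wholly routine case list.
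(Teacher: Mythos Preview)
Your plan is correct and follows the paper's approach essentially step for step: exhibit an explicit functional $F$, run a case analysis of the equations $F([x,B])=0$ over the basis, count the surviving free parameters in $\ker(B_F)$, and subtract one when passing from $\mathfrak{gl}$ to $\mathfrak{sl}$. The paper does exactly this, with two simplifications relative to what you outline. First, it does \emph{not} dualize: working directly in $\mathcal{P}(m,1,n)$ with $m<n$, the minimal element $1$ already serves as the hub touching the larger (maximal) antichain, so the reduction via $\mathcal{P}^{*}$ is unnecessary. Second, the paper's functional is simpler than what you sketch: it takes
\[
F=\sum_{i=1}^{n}E^{*}_{1,\,m+1+i}\;+\;\sum_{i=2}^{m+1}E^{*}_{i,\,m+i},
\]
i.e.\ the hub relations from $1$ to every maximal, a near-matching $i\mapsto m+i$ for the remaining minimals $i=2,\dots,m$, and a \emph{single} relation $E^{*}_{m+1,2m+1}$ from the rank-one element to one maximal---no ``surplus'' terms $E^{*}_{i,m+1}$ are needed. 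The seven-case analysis then yields $\dim\ker(B_F)=(n-1)(m-1)+(n-m-1)+1=m(n-2)+1$ in $\mathfrak{gl}$, hence $m(n-2)$ in $\mathfrak{sl}$, matching the theorem (the ``$n(m-2)$'' in the displayed lemma is a typographical slip).
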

\begin{proof}
Let $$F=\sum_{i=1}^nE^*_{1, m+1+i}+\sum_{i=2}^{m+1}E^*_{i, m+i}$$ and assume $B\in\mathfrak{g}_A(\mathcal{P}(n,1,n))\cap\ker(B_F)$. The restrictions on the entries of $B$ imposed by basis elements of $\mathfrak{g}(\mathcal{P}(m,1,n))$ break into seven cases.

\bigskip
\noindent
\textbf{Case 1:} $E_{1, m+1+i}$ for $1\le i\le n$ and $\mathbf{E_{j, m+j}}$ for $1<j\le m+1$. These basis elements contribute the conditions $E^*_{1,1}(B)=E^*_{m+1+i, m+1+i}(B)$ as well as $E^*_{i,i}(B)=E^*_{m+i, m+i}(B)$.

\bigskip
\noindent
\textbf{Case 2:} $E_{i, m+1}$ for $1\le i<m+1$. These basis elements contribute the conditions $\sum_{i=1}^nE^*_{m+1, m+1+i}(B)=0$ as well as $E^*_{m+1, m+i}(B)=0$.

\bigskip
\noindent
 \textbf{Case 3:} $E_{m+1, m+1+i}$ for $1\le i\le n$. These basis elements contribute the conditions $E^*_{1, m+1}(B)=-E^*_{i+1, m+1}(B)$, $E^*_{1, m+1}(B)+E^*_{m+1, m+1}(B)-E^*_{2m+1, 2m+1}(B)=0$ as well as $E^*_{1, m+1}(B)=0$.

\bigskip
\noindent \textbf{Case 4:} $E_{1,1}$. This basis element forces the condition $\sum_{i=1}^nE^*_{1, m+1+i}(B)=0$.

\bigskip
\noindent
 \textbf{Case 5:} $E_{i,i}$ for $1<i\le m+1$. These basis elements force the condition $E^*_{i, m+i}(B)=0$.

\bigskip
\noindent
 \textbf{Case 6:} $E_{i,i}$ for $m+1<i=m+j<2m+1$. These basis elements contribute the conditions $E^*_{1, m+j}(B)+E^*_{j, m+j}(B)=0$.

\bigskip
\noindent
 \textbf{Case 7:} $E_{i, i}$ $2m+1<i=m+j<n+m+1$. This basis element forces the condition $E^*_{1, i}(B)=0$.
 \bigskip

\noindent
We find that Cases 5, 6, 7, and part of Case 3 tell us that $E^*_{1, i}(B)=0$ for $n+1\le i\le m+n+1$ as well as $E^*_{i, m+i}(B)=0$ for $1<i\le m+1$. Case 3 all together gives $E^*_{i, m+1}(B)=0$ for $1\le i\le m$. Case 2 all together gives $\sum_{i=m+1}^nE^*_{m+1, m+1+i}(B)=0$ which contributes $n-m-1$ to $\dim(\ker(B_F))$. Finally, Case 1 and a part of Case 3 allows us to conclude that $E^*_{1,1}(B)=E^*_{i, i}(B)$ for $1<i\le n+m+1$. So, the question remaining is: How many entries of $B$ are unrestricted? These correspond to $E^*_{i, m+j}(B)$ for $1<j\neq i\le n+1$; that is, $(n-1)(m-1)$ entries. Thus, $\dim(\ker(B_F))$ is equal to $$(n-1)(m-1)+n-m-1+1=mn-2m+1=m(n-2)+1,$$ which gives an upper bound on $\ind(\mathfrak{g}(\mathcal{P}(m,1,n)))$. Restricting to $\mathfrak{sl}(n+m+1)$, the result follows. 
\end{proof}

\begin{remark}
The case of $m>n$ follows via a symmetric choice of functional and an argument similar to that used in the proof of Lemma~\ref{lem:ubmn}.
\end{remark}

\subsection{Lower Bound}\label{sec:lb}

In this subsection, we establish lower bounds on the index of $\mathfrak{g}_A(\mathcal{P}(n,1,m))$ which will match the upper bounds found in Section~\ref{sec:ub}. In the case $m=n$, the lower bound follows relatively easily from the structure of $C'(\mathfrak{g}(\mathcal{P}(n,1,n))$. 

\begin{lemma}\label{lem:lbnn}
If $n\in\mathbb{Z}_{>0}$, then $\ind(\mathfrak{g}_A(\mathcal{P}(n,1,n)))\ge n^2-2n+2$.
\end{lemma}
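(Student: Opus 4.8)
The plan is to get the matching lower bound by combining a cheap general inequality with a parity constraint coming from skew-symmetry of the commutator matrix; this makes the balanced case $m=n$ essentially free, in contrast to the unbalanced cases.

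First I would record the combinatorial data of $\mathcal P(n,1,n)$: it has $|\mathcal P(n,1,n)|=2n+1$, with $n$ minimal elements, one rank-one element, and $n$ maximal elements, and every minimal element lies below every maximal element. Hence $|Ext(\mathcal P(n,1,n))|=2n$ and $|Rel_E(\mathcal P(n,1,n))|=n^2$, so $|Rel_E|-|Ext|+1=n^2-2n+1$. By Remark~\ref{rem:contminmax} (which rests only on the reduction of Appendix~A, not on anything in Appendix~B),
\[\ind(\mathfrak g_A(\mathcal P(n,1,n)))=\bigl(\dim B_3-Rank_{R(\mathfrak g)}B_3\bigr)+n^2-2n+1,\]
where $B_3=B_3(\mathcal P(n,1,n))$; since $\dim B_3-Rank_{R(\mathfrak g)}B_3\ge 0$, this already yields $\ind(\mathfrak g_A(\mathcal P(n,1,n)))\ge n^2-2n+1$.

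Next I would invoke parity. By Theorem~\ref{thm:commat}, $\ind(\mathfrak g_A(\mathcal P(n,1,n)))=\dim\mathfrak g_A(\mathcal P(n,1,n))-Rank_{R(\mathfrak g)}C(\mathfrak g_A(\mathcal P(n,1,n)))$, and $C(\mathfrak g_A(\mathcal P(n,1,n)))$ is a skew-symmetric matrix over the field $R(\mathfrak g)$, which has characteristic zero; hence its rank is even. Therefore $\ind(\mathfrak g_A(\mathcal P(n,1,n)))\equiv\dim\mathfrak g_A(\mathcal P(n,1,n))\pmod 2$. Counting dimensions, $\dim\mathfrak g_A(\mathcal P(n,1,n))=(|\mathcal P(n,1,n)|-1)+|Rel(\mathcal P(n,1,n))|=2n+(n^2+2n)=n^2+4n\equiv n\pmod 2$. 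Since $n^2-2n+1=(n-1)^2\equiv n-1\pmod 2$ has parity opposite to $n$, the value $n^2-2n+1$ is excluded; combined with the inequality of the previous paragraph this forces $\ind(\mathfrak g_A(\mathcal P(n,1,n)))\ge n^2-2n+2$, which is the assertion of the lemma.

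I do not expect a genuine obstacle: the content of the argument is precisely that in the balanced case the block $B_3$ is a $2n\times 2n$ matrix, so the trivial bound $Rank_{R(\mathfrak g)}B_3\le\dim B_3$ together with the evenness of the rank of a skew form already pins the index to within $1$ of the upper bound of Lemma~\ref{lem:ubnn}; for $m\ne n$ the analogous trivial bound is too weak, which is why the unbalanced cases require the matching/graph machinery of Section~\ref{sec:lb}. If one instead wanted to argue ``from the structure of $C'$" directly, the alternative would be to carry out the Appendix~A reduction on $\mathcal P(n,1,n)$, write out the entries of $B_3(\mathcal P(n,1,n))$, and exhibit an explicit nonzero kernel vector arising from the symmetry interchanging the ``down" and ``up" columns; the only laborious part there is bookkeeping the row operations.
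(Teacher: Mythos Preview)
Your proof is correct, and it reaches the bound by a genuinely different route than the paper.

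Both arguments start from Remark~\ref{rem:contminmax}, which for $\mathcal P(n,1,n)$ gives the trivial lower bound $|Rel_E|-|Ext|+1=(n-1)^2=n^2-2n+1$; the issue is how to gain the extra $+1$.  The paper works \emph{locally} inside the $2n\times 2n$ block $B_3$: after explicit row operations it drops the row $\mathbf{E_{n+1,n+1}}$ and a column that has been zeroed out, obtaining a $(2n-1)\times(2n-1)$ skew-symmetric matrix $M$; since an odd-order skew-symmetric matrix is singular, $\text{rank}\,B_3\le 2n-1$, and the extra $+1$ follows.  You instead argue \emph{globally}: the full commutator matrix $C(\mathfrak g_A(\mathcal P))$ is skew-symmetric over a field of characteristic zero, so its rank is even and hence $\ind\equiv\dim\pmod 2$; computing $\dim=n^2+4n\equiv n\pmod 2$ and noting $(n-1)^2\equiv n-1\pmod 2$ rules out the value $n^2-2n+1$.

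Both arguments are ultimately the same parity principle applied at different scales.  Your version is cleaner and avoids all the bookkeeping of the Appendix~A row operations; the paper's version, in exchange, produces the explicit structure of $B_3$ (the matrices $A$ and $B$), which carries more information even if that information is not needed for the lower bound itself.  Your remark that the balanced case is ``essentially free'' while the unbalanced cases are not is also accurate: for $|n-m|\ge 3$ the gap between the trivial bound $(m-1)(n-1)$ and the target $\min(m,n)\cdot(\max(m,n)-2)$ exceeds $1$, so a single parity step cannot close it, which is why the paper resorts to the matching argument there.
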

\begin{proof}
Consider the $B_3$ block of $C'(\mathfrak{g}(\mathcal{P}(n,1,n)))$, which is of the form
\[
  \kbordermatrix{
    & \mathbf{E_{1,n+1}}  & \hdots & \mathbf{E_{n,n+1}} & \mathbf{E_{n+1,n+2}} & \hdots & \mathbf{E_{n+1,2n+1}}  \\
    \mathbf{E_{2,n+1}} & 0   & \hdots & 0 & -\frac{E_{2, n+1}}{E_{1, n+1}}E_{1, n+2}+E_{2, n+2} &  \hdots & -\frac{E_{2, n+1}}{E_{1, n+1}}E_{1, 2n+1}+E_{2, 2n+1}     \\ 
    \vdots & \vdots   & \hdots & \vdots & \vdots & \hdots & \vdots    \\
   \mathbf{E_{n,n+1}} & 0   & \hdots & 0 & -\frac{E_{n, n+1}}{E_{1, n+1}}E_{1, n+2}+E_{n, n+2} & \hdots & -\frac{E_{n, n+1}}{E_{1, n+1}}E_{1, 2n+1}+E_{n, 2n+1}    \\ 
   \mathbf{E_{n+1,n+1}} & -E_{1, n+1}   & \hdots & -E_{n, n+1} & E_{n+1, n+2} & \hdots & E_{n+1, 2n+1} \\
   \mathbf{E_{n+1,n+2}} & -E_{1, n+2}  & \hdots & -E_{n, n+2} & \frac{E_{n+1, n+2}}{E_{1, n+1}}E_{1, n+2} & \hdots & \frac{E_{n+1, n+2}}{E_{1, n+1}}E_{1, 2n+1}    \\
   \vdots & \vdots  & \hdots & \vdots & \vdots & \hdots & \vdots     \\
   \mathbf{E_{n+1,2n+1}} & -E_{1, 2n+1}  & \hdots & -E_{n, 2n+1} & \frac{E_{n+1, 2n+1}}{E_{1, n+1}}E_{1, n+2} & \hdots & \frac{E_{n+1, 2n+1}}{E_{1, n+1}}E_{1, 2n+1} \\
  }.
\]
Apply the row operations $\mathbf{E_{n+1,n+i+1}}-\frac{E_{1,n+i+1}}{E_{1,n+1}}\mathbf{E_{n+1,n+1}}$ at row $\mathbf{E_{n+1,n+i+1}}$, for $1\le i\le n$. Now, multiply rows of the form $\mathbf{E_{i,n+1}}$ and $\mathbf{E_{n+1,n+i+1}}$, for $1\le i\le n$, by $E_{1,n+1}$. Consider the sub-matrix $M$ defined by rows and columns $\mathbf{E_{i,n+1}}$ and $\mathbf{E_{n+1,n+i+1}}$, for $1\le i\le n$, which now must be of the form
$$\begin{bmatrix}
     0 & A    \\
     -A^T & B    
\end{bmatrix},$$
\noindent
where 
$$A=\begin{bmatrix}
    E_{1, n+1}E_{2, n+2}-E_{2, n+1}E_{1, n+2} & \hdots & E_{1, n+1}E_{2, 2n+1}-E_{2, n+1}E_{1, 2n+1}     \\ 
    \vdots & \hdots & \vdots    \\
   E_{1, n+1}E_{n, n+2}-E_{n, n+1}E_{1, n+2} & \hdots & E_{1, n+1}E_{n, 2n+1}-E_{n, n+1}E_{1, 2n+1}    
\end{bmatrix},$$ 
\noindent
and
$$B=\scalemath{0.6}{\begin{bmatrix}
    0 & E_{n+1,n+2}E_{1,n+3}-E_{1,n+2}E_{n+1,n+3} & \hdots & E_{n+1,n+2}E_{1,2n}-E_{1,n+2}E_{n+1,2n} & E_{n+1,n+2}E_{1,2n+1}-E_{1,n+1}E_{n+1,2n+1}    \\
    E_{n+1,n+3}E_{1,n+2}-E_{1,n+3}E_{n+1,n+2} & \vdots & \hdots &
    \vdots  & \vdots     \\
    \vdots & \vdots & \hdots & \vdots & \vdots \\
    \vdots & \vdots & \hdots & \vdots & E_{n+1,2n}E_{1,2n+1}-E_{1,2n}E_{n+1,2n+1} \\
    E_{n+1, 2n+1}E_{1, n+2}-E_{1,2n+1}E_{n+1,n+2} & E_{n+1,2n+1}E_{1,n+3}-E_{1,2n+1}E_{n+1,n+3} & \hdots & E_{n+1,2n+1}E_{1,2n}-E_{1,2n+1}E_{n+1,2n} & 0    
\end{bmatrix}};$$ 
that is, $B=-B^T$ and $M$ is skew-symmetric. Note that $M$ has dimension $2n-1$, so that $\det(M)=0$. Thus, there is at most one row of $M$ in the span of the others. Assuming all other rows of $M$ are linearly independent, it then follows from Remark~\ref{rem:contminmax} that the index of $\mathfrak{g}_A(\mathcal{P}(n,1,n))$ is bounded below by $|Rel(\mathcal{P})|-|Ext(\mathcal{P})|+2=n^2-2n+2$.
\end{proof}

The case of $n\neq m$ requires results relating the rank of a skew-symmetric matrix to the matching number of a corresponding graph. First, recall a \textit{matching} of a graph $G=(V,E)$ is a subset $E_M$ of $E$ such that for any two $e_1,e_2\in E_M$ one has $e_1\cap e_2=\emptyset$. Using this idea, the \textit{matching number} of a graph $G$ is the maximum cardinality of a matching on $G$. We will denote the matching number of a graph $G$ by $\nu(G)$. Given a matching $E_M\subset E$ on $G$, a vertex is called \textit{free} if it is not contained in any edge of $E_M$. Now, given a matching on a graph $G$ and two free vertices $v_1$ and $v_2$, a path between $v_1$ and $v_2$ is called \textit{augmenting} if the path alternates between edges contained and not contained in $E_M$, beginning and ending on edges not contained in $E_M$. The following result will be our main tool for computing the matching number of a graph and is standard in the theory of matchings.

\begin{theorem}\label{thm:aug}
Given a graph $G=(V,E)$, a matching $E_M\subset E$ is maximal if and only if there is no augmenting path in $G$ between free vertices of $E_M$.
\end{theorem}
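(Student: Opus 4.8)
The plan is to establish both implications by contraposition; this is the classical theorem of Berge, and the whole argument turns on examining symmetric differences of matchings.

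For the ``only if'' direction I would argue the contrapositive: given an augmenting path $P$ joining free vertices $v_1$ and $v_2$, I produce a strictly larger matching. Since $P$ alternates and both its end-edges lie outside $E_M$, it has an odd number of edges, one more outside $E_M$ than inside; set $E_M' = E_M \,\triangle\, E(P)$, the symmetric difference. The only vertices whose matched-edge status changes are the interior vertices of $P$ — each trades its unique $E_M$-edge in $P$ for the neighbouring non-$E_M$-edge and so remains of matching-degree one — and $v_1,v_2$, which were free and now meet exactly one matched edge apiece. Hence $E_M'$ is a matching with $|E_M'| = |E_M| + 1$, so $E_M$ is not of maximum size.

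For the ``if'' direction, again by contraposition, suppose $E_M$ is not of maximum size, so there is a matching $N$ with $|N| > |E_M|$. Form the subgraph $H$ on vertex set $V$ with edge set $E_M \,\triangle\, N$. Each vertex of $H$ has degree at most two (at most one edge from each of $E_M$, $N$), so every connected component of $H$ is an isolated vertex, a simple path, or a cycle; along any path or cycle of $H$ the edges alternate between $E_M$ and $N$, so each cyclic component is even and uses equally many edges of $E_M$ and of $N$. Counting edges component by component, $|N|>|E_M|$ forces some component $Q$ to contain strictly more $N$-edges than $E_M$-edges; by the alternation this $Q$ must be a path whose first and last edges both belong to $N$. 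Its two endpoints cannot be covered by $E_M$: such an $E_M$-edge would lie in $E_M\setminus N$ (as $N$ is a matching), hence in $H$, giving that endpoint degree at least two in $H$, contradicting that it is an endpoint of the component $Q$. Thus the endpoints of $Q$ are free with respect to $E_M$ and $Q$ alternates, beginning and ending outside $E_M$ — i.e. $Q$ is an augmenting path for $E_M$.

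The only real obstacle is the structural step in the second direction: identifying the components of the symmetric-difference subgraph as paths and even cycles, and then isolating the ``surplus'' component as a path with both endpoints free relative to $E_M$. Everything else is degree bookkeeping along an alternating walk. I would also flag the cosmetic point that the statement writes ``maximal'' where the intended — and correct — notion is ``maximum cardinality''; the argument above proves exactly that equivalence.
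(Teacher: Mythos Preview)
Your proof is correct and is the standard argument for Berge's theorem via the symmetric difference of two matchings. The paper, however, does not prove this statement at all: it simply introduces it as ``standard in the theory of matchings'' and uses it as a tool. So there is nothing to compare approach-wise; you have supplied a complete proof where the paper only cites the result.

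Your closing remark about ``maximal'' versus ``maximum'' is also well taken: the paper uses the theorem to bound the rank of the commutator matrix via $2\nu(G^{\mathcal{P}})$, so the intended meaning is indeed maximum cardinality, and that is what your argument establishes.
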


Now, to a given skew-symmetric matrix $M$ one can attach a graph $G^M$ whose adjacency matrix is the same size as $M$ with a 1 in all locations where $m$ is nonzero, and 0's elsewhere. For a poset $\mathcal{P}$, let $G^{\mathcal{P}}$ be the graph attached to the commutator matrix of $\mathfrak{g}_A(\mathcal{P})$. Note in such a graph the vertices will represent basis elements of $\mathfrak{g}_A(\mathcal{P})$ and edges will be defined by pairs of non-commuting basis elements. To get a lower bound on $\ind\mathfrak{g}_A(\mathcal{P}(m, 1, n))$ for $m<n$ we will use the following result which follows from Theorem 2.5 of \textbf{\cite{Matching}}.

\begin{theorem}
The maximum rank of $C(\mathfrak{g}_A(\mathcal{P}))$ is equal to 2$\nu(G^{\mathcal{P}}$).
\end{theorem}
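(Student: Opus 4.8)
The plan is to extract the rank of the skew-symmetric matrix $C(\mathfrak{g}_A(\mathcal{P}))$ directly from the combinatorics of $G^{\mathcal{P}}$ via a Pfaffian expansion for the ``$\le$'' direction, and to close the ``$\ge$'' direction with Theorem~2.5 of \textbf{\cite{Matching}}. First I would record the standard fact that for a skew-symmetric matrix $M$ over a field its rank is even, say $\mathrm{rank}\,M = 2r$, and that $r$ is the largest integer for which some $2r\times 2r$ \emph{principal} submatrix $M_S$ is nonsingular (equivalently $\operatorname{Pf}(M_S)\ne 0$, since $\det M_S = \operatorname{Pf}(M_S)^2$). Applying this with $M = C(\mathfrak{g}_A(\mathcal{P}))$ regarded over $R(\mathfrak{g})$, and recalling that $Rank_{R(\mathfrak{g})}C(\mathfrak{g}_A(\mathcal{P}))$ is exactly the generic (hence maximum) rank of $C$, the problem reduces to locating the largest index set $S$ with $\operatorname{Pf}(C_S)\ne 0$ in $R(\mathfrak{g})$.

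For the upper bound, expand $\operatorname{Pf}(C_S) = \sum_{\pi}\mathrm{sgn}(\pi)\prod_{\{i,j\}\in\pi}C_{ij}$, the sum over perfect matchings $\pi$ of the vertex set $S$ inside the complete graph. By construction the entry $C_{ij} = [x_i,x_j]$ is a nonzero element of $R(\mathfrak{g})$ precisely when $\{i,j\}\in E(G^{\mathcal{P}})$, so every term indexed by a matching using a non-edge of $G^{\mathcal{P}}$ is zero. Hence $\operatorname{Pf}(C_S)\ne 0$ forces the induced subgraph $G^{\mathcal{P}}[S]$ to carry a perfect matching, whence $r = |S|/2 \le \nu(G^{\mathcal{P}})$. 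This gives $Rank_{R(\mathfrak{g})}C(\mathfrak{g}_A(\mathcal{P})) \le 2\nu(G^{\mathcal{P}})$ with no further input.

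For the reverse inequality I would invoke Theorem~2.5 of \textbf{\cite{Matching}}, which identifies the maximum rank of a skew-symmetric matrix with prescribed zero/nonzero pattern $G^{\mathcal{P}}$ as $2\nu(G^{\mathcal{P}})$, together with the observation that $C(\mathfrak{g}_A(\mathcal{P}))$ has exactly the pattern $G^{\mathcal{P}}$. The main obstacle is that $C(\mathfrak{g}_A(\mathcal{P}))$ is \emph{not} a generic matrix of that pattern: its entries are linear forms in the $\dim\mathfrak{g}_A(\mathcal{P})$ coordinates and satisfy the Lie-algebraic relations, so a priori every $2\nu(G^{\mathcal{P}})\times 2\nu(G^{\mathcal{P}})$ principal Pfaffian might still vanish. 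To remove this gap I would fix a maximum matching $M$ of $G^{\mathcal{P}}$ on a vertex set $S$ and show $\operatorname{Pf}(C_S)\ne 0$ by a leading-term argument: since each $C_{ij}$ is (up to sign) a single coordinate or a difference of two coordinates, one can order the coordinates so that the monomial contributed by $M$ to the Pfaffian expansion is produced by no other perfect matching of $S$, hence survives. This is precisely the mechanism made explicit in the determinant computation of Lemma~\ref{lem:lbnn} (the $m=n$ case) and in the matching analyses of Section~\ref{sec:lb}; combining the two inequalities yields $Rank_{R(\mathfrak{g})}C(\mathfrak{g}_A(\mathcal{P})) = 2\nu(G^{\mathcal{P}})$, i.e.\ the asserted equality of the maximum rank with $2\nu(G^{\mathcal{P}})$.
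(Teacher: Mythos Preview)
The paper does not prove this statement; it records that the result ``follows from Theorem~2.5 of \textbf{\cite{Matching}}'' and invokes it as a black box. In the paper's application (Lemma~\ref{lem:lbmn}) only the inequality $\mathrm{rank}\,C(\mathfrak{g}_A(\mathcal{P}))\le 2\nu(G^{\mathcal{P}})$ is actually used, giving a lower bound on the index; the matching upper bound on the index is obtained separately via explicit functionals in Section~\ref{sec:ub}. Your Pfaffian argument for the inequality $\mathrm{rank}\le 2\nu$ is correct and standard.

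Your argument for the reverse inequality, however, has a genuine gap. You assert that for a maximum matching $M$ on a vertex set $S$ ``one can order the coordinates so that the monomial contributed by $M$ to the Pfaffian expansion is produced by no other perfect matching of $S$, hence survives.'' This is not justified: the entries $[x_i,x_j]$ of the commutator matrix are not independent indeterminates but specific linear forms dictated by the structure constants of $\mathfrak{g}_A(\mathcal{P})$, and distinct perfect matchings of $S$ can contribute identical monomials that cancel. That the particular computations in Section~\ref{sec:lb} succeed for $\mathcal{P}(m,1,n)$ is not a proof that $\operatorname{Pf}(C_S)\ne 0$ for an arbitrary poset $\mathcal{P}$ and an arbitrary maximum matching. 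In fact the statement as you interpret it---that the generic rank of the commutator matrix always equals $2\nu(G^{\mathcal{P}})$---is stronger than what the paper asserts or needs: Theorem~2.5 of \textbf{\cite{Matching}} concerns the maximum rank over \emph{all} skew-symmetric matrices with the prescribed zero pattern, and the specific commutator matrix need not attain it. The paper's squeeze-play is designed precisely to avoid this issue: $\mathrm{rank}\le 2\nu$ yields the index lower bound, and the explicit functionals of Section~\ref{sec:ub} yield the index upper bound independently.
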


\begin{lemma}\label{lem:lbmn}
If $n,m\in\mathbb{Z}_{>0}$ satisfying $m<n$, then
$\ind\mathfrak{g}_A(\mathcal{P}(m,1,n))=m(n-2)$.
\end{lemma}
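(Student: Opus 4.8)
The plan is to combine the rank characterization of the index with the matching bound for the commutator matrix, and to play the resulting lower bound off against the upper bound already obtained in Lemma~\ref{lem:ubmn}. By Theorem~\ref{thm:commat}, $\ind\mathfrak{g}_A(\mathcal{P})=\dim\mathfrak{g}_A(\mathcal{P})-\mathrm{Rank}_{R(\mathfrak{g})}C(\mathfrak{g}_A(\mathcal{P}))$, and by the matching result quoted above the rank of $C(\mathfrak{g}_A(\mathcal{P}))$ is bounded above by $2\nu(G^{\mathcal{P}})$. Since $\dim\mathfrak{g}_A(\mathcal{P}(m,1,n))=mn+2m+2n$ (one generator for each element of $\mathcal{P}(m,1,n)$ minus one for the trace condition, plus one for each of the $m+n+mn$ strict relations) and $m(n-2)=(mn+2m+2n)-2(2m+n)$, it suffices to show that $G^{\mathcal{P}(m,1,n)}$ has a maximum matching of size $2m+n$. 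This gives $\ind\mathfrak{g}_A(\mathcal{P}(m,1,n))\ge m(n-2)$, and Lemma~\ref{lem:ubmn} supplies the reverse inequality.

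To carry this out I would first describe $G^{\mathcal{P}(m,1,n)}$. Writing $c$ for the rank-one element, $1,\dots,m$ for the minimal elements and $t_1,\dots,t_n$ for the maximal ones, the standard $\mathfrak{sl}$-basis of $\mathfrak{g}_A(\mathcal{P}(m,1,n))$ consists of diagonal generators $h_p$ (indexed by all poset elements except one, say $N=t_n$) together with the off-diagonal generators $E_{i,c}$, $E_{c,t_k}$ and $E_{i,t_k}$. The features that make the matching tractable are: the $E_{i,c}$'s and $E_{c,t_k}$'s span a complete bipartite subgraph (the bracket $[E_{i,c},E_{c,t_k}]=E_{i,t_k}$ is nonzero for all $i,k$); the $E_{i,t_k}$'s pairwise commute and, apart from those in the distinguished column $t_n$ (which are joined to every $h_p$ because of the $-E_{N,N}$ term in the trace-zero normalization), each $E_{i,t_k}$ has degree two, being adjacent only to $h_i$ and $h_{t_k}$; and each $h_p$ is adjacent precisely to those off-diagonal generators whose index set contains $p$, together with the distinguished column. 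Viewing the $mn$ generators $E_{i,t_k}$ as the cells of an $m\times n$ grid, I would then take the matching $M$ that matches each diagonal generator to a distinct cell adjacent to it ($h_i$ claiming a cell in row $i$, $h_{t_k}$ a cell in column $k$ for $k<n$, and $h_c$ a cell in column $n$) and matches $E_{i,c}$ to $E_{c,t_i}$ for $1\le i\le m$. This uses $2m+n$ edges and leaves exactly $m(n-2)$ free vertices, namely the $n-m$ generators $E_{c,t_k}$ with $k>m$ and the $mn-m-n$ unclaimed cells. By Theorem~\ref{thm:aug} it remains only to show that $M$ admits no augmenting path between free vertices: a free cell $E_{i,t_k}$ is a near-leaf whose only exits run through the matched generators $h_i,h_{t_k}$; a free $E_{c,t_k}$ can only step into the complete bipartite $D/U$ block; and one checks that an alternating path started at a free vertex can never reach a second free vertex, since it either cycles inside the $D/U$ block or gets trapped bouncing through the matched $h$-to-cell pairs.

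The main obstacle is exactly this last verification: it must be organized carefully enough to be checkable, which means the cells claimed by the $h_p$ have to be chosen so that the free cells and the free $E_{c,t_k}$'s are dispersed across the grid and the $U$-column in a way that (a) prevents an alternating path from leaving the $D/U$ block and looping back to another free $E_{c,t_k}$, and (b) leaves the bipartite (diagonal generators)-versus-(cells) subgraph with no augmenting path among its unclaimed cells. I expect this bookkeeping to be the real content of the proof; everything else is routine. I would also flag that the degenerate case $m=1$ needs a slightly different, and easier, matching, since then there are only $n<m+n$ cells available and not every diagonal generator can be matched to one — there one matches $h_1$ to $E_{1,c}$ and argues as before.
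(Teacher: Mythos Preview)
Your proposal is correct and follows essentially the same route as the paper: build an explicit matching in $G^{\mathcal{P}(m,1,n)}$ of size $2m+n$, invoke Theorem~\ref{thm:aug} to certify maximality via the absence of augmenting paths, and combine the resulting lower bound with Lemma~\ref{lem:ubmn}. The only notable difference is that the paper's matching pairs some diagonal generators with $E_{c,t_k}$'s rather than insisting that every $h_p$ be matched to a cell $E_{i,t_k}$, which lets it handle $m=1$ uniformly instead of as the separate degenerate case you flagged.
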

\begin{proof}
We will show that the following matching on $G^{\mathcal{P}(m,1,n)}$ is maximal:
\begin{itemize}
    \item $E_{i,i}-E_{m+n+1,m+n+1}$ with $E_{i,n+m+1}$ for $1\le i\le m+1$;
    \item $E_{i,i}-E_{m+n+1,m+n+1}$ with $E_{i-m-1,i}$ for $n+1<i\le 2m+2$;
    \item $E_{i,i}-E_{m+n+1,m+n+1}$ with $E_{m+1,i}$ for $2m+2<i\le n+m+1$;
    \item $E_{i,m+1}$ with $E_{m+1,m+1+i}$ $1\le i\le m$.
\end{itemize}
Note that the only free vertices correspond to basis elements of the form $E_{i,j}$, for $i\in \mathcal{P}$ minimal and $j\in \mathcal{P}$ maximal satisfying $j\neq n+m+1,m+1+i$. To prove that the above matching is maximal, we must show that there does not exist an augmenting path between these free vertices by Theorem~\ref{thm:aug}. Any such path must be of the form illustrated in Figure~\ref{fig:apath}, where to condense notation, for $i\in \mathcal{P}$, we have replaced $E_{i,i}-E_{m+n+1,m+n+1}$ by $E_{i,i}.$
\begin{figure}[H]
$$\begin{tikzpicture}
\node at (-0.5,0.5) {(1) $E_{i,j}$};
\node at (1.5,1.5) {$E_{i,i}$};
\node at (1.5,-0.5) {(2) $E_{j,j}$};
\draw[dashed] (0,1) -- (1,1.5);
\draw[dashed] (0,0) -- (0.5,-0.5);
\draw (2,1.5) -- (3,1.5);
\node at (3.5,1.5) {(1)};
\draw (2,-1) -- (3,-1.5);
\draw (2.5,-0.5) -- (3.5,-0.5);
\node at (3.5,-2) {(1)};
\node at (4.5,-0.5) {(3) $E_{n+1,j}$};
\draw[dashed] (5.5,0) -- (6.5,0.5);
\draw[dashed] (5.5,-1) -- (6.5,-1.5);
\node at (7.5,0.5) {(4) $E_{i,n+1}$};
\node at (7.75,-1.5) {(5) $E_{n+1,n+1}$};
\draw (8.5,0.5) -- (9.5,0.5);
\node at (10.5,0.5) {$E_{n+1,n+1+i}$};
\draw (9,-1.5) -- (10,-1.5);
\node at (11,-1.5) {$E_{n+1,n+m+1}$};
\draw[dashed] (12.5,-1.5) -- (13.5,-1.5);
\node at (14,-1.5) {(4)};
\draw[dashed] (11.5,1) -- (12.5,1.5);
\draw[dashed] (11.5,0) -- (12.5,-0.5);
\draw[dashed] (12,0.5) -- (13,0.5);
\node at (13,2) {(2)};
\node at (13.5,0.5) {(4)};
\node at (13,-0.5) {(5)};
\end{tikzpicture}$$
\caption{Augmenting Path}\label{fig:apath}
\end{figure}
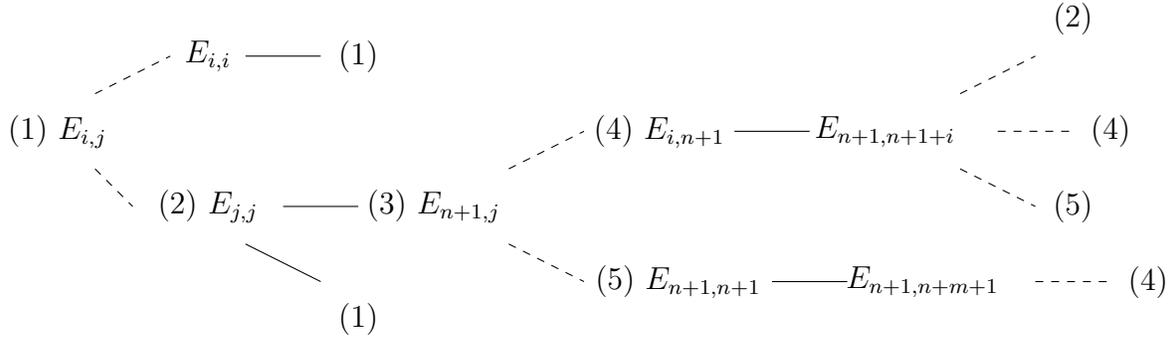
Thus, there can be no augmented path between the free vertices in our matching on $G^{\mathcal{P}(m,1,n)}$, i.e., our matching is maximal. Now the maximum rank is given by double the size of the maximal matching, so that the lower bound on $\ind(\mathfrak{g}_A(\mathcal{P}(m,1,n))$ is given by the number of free vertices, of which there are $m(n-2)$.
\end{proof}

\begin{remark}
The case $n<m$ follows via a symmetric matching and an argument similar to that used in the proof of Lemma~\ref{lem:lbmn}.
\end{remark}

\noindent
Combining Lemmas~\ref{lem:ubnn}$-$\ref{lem:lbmn} establishes Theorem~\ref{thm:CPn1m} of Section~\ref{sec:if}, repeated below for completeness.

\begin{theorem*}
\[\ind\mathfrak{g}(\mathcal{P}(n,1,m)) =  \begin{cases} 
      n^2-2n+2, & n=m; \\
      n(m-2), & m>n; \\
      m(n-2), & n>m.
   \end{cases}
\]
\end{theorem*}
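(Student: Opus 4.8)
The plan is to run the ``squeeze play'' already assembled in this appendix: in each of the three regimes $n=m$, $m>n$, $n>m$, quote the relevant upper bound, then the matching lower bound, and observe that the two agree. Concretely, for $n=m$ combine Lemma~\ref{lem:ubnn} ($\ind\mathfrak{g}_A(\mathcal{P}(n,1,n))\le n^2-2n+2$) with Lemma~\ref{lem:lbnn} (the reverse inequality); for the unequal cases combine Lemma~\ref{lem:ubmn} with Lemma~\ref{lem:lbmn} (and the symmetric variants flagged in the two remarks), which pin the index down to $n(m-2)$ when $m>n$ and $m(n-2)$ when $n>m$. Since the upper and lower bounds coincide in every case, the three-line piecewise formula of Theorem~\ref{thm:CPn1m} falls out at once. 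The one routine caveat is that every commutator-matrix computation is performed in $\mathfrak{gl}$ using the basis $\mathscr{B}$, so one subtracts the $+1$ contributed by the central row $\sum\mathbf{E_{i,i}}$ to descend to $\mathfrak{sl}$, i.e.\ to the type-A algebra.

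It is worth recording the architecture of the two halves, since that is where the content sits. On the upper-bound side the recipe is: choose a functional $F$ (a sum of coordinate functionals $E^*_{i,j}$) so that, for a generic $B\in\mathfrak{g}_A(\mathcal{P})\cap\ker(B_F)$, the equations $F([x_i,B])=0$ ranging over a basis $\{x_i\}$ annihilate as many entries of $B$ as possible; then $\dim\ker(B_F)$ equals the number of surviving free entries — essentially the off-diagonal entries $E^*_{i,m+j}(B)$ with $i\ne j$ together with a bounded diagonal contribution — and this bounds $\ind$ from above. On the lower-bound side, the balanced case $n=m$ is handled directly from the $B_3$ block of $C'(\mathfrak{g}(\mathcal{P}(n,1,n)))$: a short sequence of row operations exposes a skew-symmetric submatrix $M$ of odd size $2n-1$, whose rank is then argued to be exactly $2n-2$. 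The unbalanced case instead passes to the graph $G^{\mathcal{P}(m,1,n)}$ recording the non-commuting pairs of basis elements, and uses the identity relating the maximum rank of $C(\mathfrak{g}_A(\mathcal{P}))$ to $2\nu(G^{\mathcal{P}})$ together with the augmenting-path criterion of Theorem~\ref{thm:aug} to compute $\nu$.

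The hard part is not the assembly but the two maximality/genericity verifications buried in the lower-bound lemmas. In Lemma~\ref{lem:lbnn} one must confirm that, beyond the single dependence forced by $\det M=0$ (oddness of $M$), the remaining $2n-2$ rows are genuinely independent over $R(\mathfrak{g})$ — equivalently, that the maximal Pfaffian of the relevant $(2n-2)\times(2n-2)$ skew principal submatrix does not vanish identically — and this forces one to look at the explicit $A$- and $B$-blocks displayed there; the argument as written takes this independence for granted, so making it airtight is where the real effort would go. In Lemma~\ref{lem:lbmn} the crux is the exhaustive check that no alternating path between the proposed free vertices $E_{i,j}$ (those with $j\ne n+m+1,\,m+1+i$) can be augmenting: every candidate path is forced to thread vertices of the shapes labeled (1)--(5) in Figure~\ref{fig:apath}, and one must verify it can never terminate as a legitimate augmenting path. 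Once both of these are secured, combining the four lemmas yields Theorem~\ref{thm:CPn1m} with no further work.
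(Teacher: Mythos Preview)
Your proposal is correct and matches the paper's own approach essentially verbatim: the theorem is obtained precisely by combining Lemmas~\ref{lem:ubnn}--\ref{lem:lbmn} (together with the two symmetry remarks) so that the upper and lower bounds coincide in each regime. Your additional commentary on where the real content lies---the rank-$(2n-2)$ claim for the skew block $M$ in Lemma~\ref{lem:lbnn} and the exhaustive augmenting-path check in Lemma~\ref{lem:lbmn}---accurately pinpoints the nontrivial verifications that the paper leaves implicit.
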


\end{document}